\documentclass{amsart}

\usepackage[english]{babel}
\usepackage[normalem]{ulem}
\usepackage[margin=1in]{geometry} 

\usepackage{amsmath, mathtools, amsthm, amssymb, soul}
\usepackage{graphicx}
\usepackage[colorlinks=true, allcolors=blue]{hyperref}
\usepackage[dvipsnames]{xcolor}
\usepackage{tikz}

\usetikzlibrary{arrows.meta, positioning,calc}
\usepackage{comment} 

\theoremstyle{definition} 
\newtheorem{theorem}{{Theorem}}[section]
\newtheorem{lemma}[theorem]{{Lemma}}
\newtheorem{proposition}[theorem]{{Proposition}}
\newtheorem{definition}[theorem]{{Definition}}
\newtheorem{notation}[theorem]{{Notation}}
\newtheorem{corollary}[theorem]{{Corollary}}
\newtheorem{example}[theorem]{{Example}}
\newtheorem{conjecture}[theorem]{{Conjecture}}
\newtheorem{remark}[theorem]{{Remark}}

\newtheorem{question}[theorem]{{Question}}
\newtheorem{problem}[theorem]{{Problem}}

\usepackage{todonotes}
\usepackage{standalone}
\usepackage{xcolor} 

\newcommand{\inv}{^{-1}}

\newcommand{\wt}{\widetilde}
\newcommand{\textmaxsl}{max-$sl$ } 

\newcommand{\R}{\mathbb{R}}

\newcommand{\N}{\mathbb{N}} 
\newcommand{\Tcal}{\mathcal{T}} 

\DeclareMathOperator{\interior}{int} 
\newcommand{\Crit}{\mathrm{Crit}} 

\makeatletter
\def\namedlabel#1#2{
    \begingroup
    \def\@currentlabel{#2}%
    \phantomsection\label{#1}
    \endgroup
}
\makeatother

\title{Decompositions and diagrams of symplectic surfaces in Weinstein domains}

\author[R.~Aranda]{Rom\'an Aranda}
\address{Department of Mathematics \\ Francis Marion University \\ 4822 E. Palmetto St, Florence, SC 29502}
\email{\href{mailto:romanaranda123@gmail.com}{romanaranda123@gmail.com}}

\author[P.~Cahn]{Patricia Cahn}
\address{Department of Mathematics \\ Smith College\\46 College Lane, Northampton, MA 01060}
\email{\href{mailto:pcahn@smith.edu}{pcahn@smith.edu}}

\author[A.~Roy]{Agniva Roy}
\address{Department of Mathematics \\ 
Boston College \\
Chestnut Hill, MA 02467}
\email{\href{mailto:agniva.roy@bc.edu}{agniva.roy@bc.edu}}

\author[M.~Zhang]{Melissa Zhang}
\address{Department of Mathematics, University of California, Davis, One Shields Avenue, Davis, CA 95616
}
\email{\href{mailto:mlzhang@ucdavis.edu}{mlzhang@ucdavis.edu}}

\date{August 28, 2026}

\begin{document}

\begin{abstract}
We introduce combinatorial and diagrammatic methods for representing properly embedded symplectic surfaces in 4-dimensional Weinstein domains.
We show that positive ascending surfaces, which include complex curves in Stein domains and multisections of Lefschetz fibrations, can be placed in  bridge position with respect to Islambouli--Starkston's bisection-with-divides structure on the Weinstein domain.
We algorithmically relate various decompositions of such surfaces, including transverse banded unlink diagrams, quasipositive factorizations, bridge bisections with divides, shadow diagrams (curves on surfaces), and pointed monodromy factorizations. 

We also develop a new way to present branched covers of Weinstein domains along positive ascending surfaces, which, combined with work of Loi--Piergallini, recovers Islambouli--Starkston's result that every compact Weinstein domain admits a bisection with divides.
\end{abstract}

\maketitle
\setcounter{tocdepth}{1}
\tableofcontents

\section{Introduction}
Over the past century \cite{Artin}, many diagrammatic descriptions for 2-knots, or surfaces, in 4-space have appeared~\cite{CS:Book}. In this work, we introduce combinatorial (i.e., diagrammatic, with finite data) 
techniques to represent symplectic surfaces properly embedded in 4-dimensional Weinstein domains. 
Seminal work of Rudolph \cite{rudolph1983algebraic} and Boileau--Orevkov \cite{boileau-orevkov} connected algebraic properties of braid closures in $S^3$ and the geometry of surfaces in the symplectic 4-ball, the simplest Weinstein domain.
Rudolph showed that the set of quasipositive braid\footnote{A braid that can be written as a product of conjugates of the standard Artin braid group generators.}  closures
in $S^3$ is equal to the set of transverse intersections of smooth complex curves with the unit sphere in $\mathbb{C}^2$. 
In particular, properly embedded complex surfaces in $B^4$ can be described by braided unlinks, together with half-twisted bands as in Figure~\ref{fig:fig_qp_trefoil}(left, middle). 
Hayden generalized this, showing
that positive ascending surfaces in Weinstein domains can be described by quasipositive pointed mapping classes \cite{hayden21}.

\begin{figure}[h]
    \centering
    \includegraphics[width=4.5in]{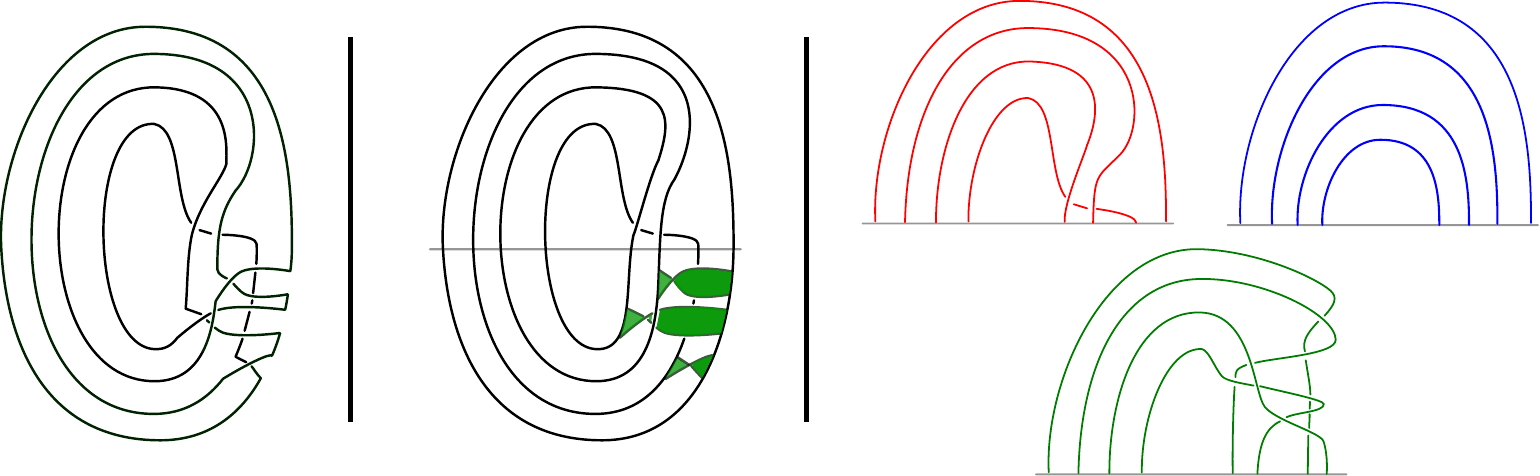}
    \caption{(Left) A quasipositive factorization of the trefoil knot.  (Middle) Banded diagram for a quasipositive surface bounded by the trefoil. (Right) Braided tri-plane diagram representing the same surface.}
    \label{fig:fig_qp_trefoil}
\end{figure}

In this work, we prove that positive ascending surfaces in Weinstein domains can be represented by systems of curves on surfaces, and equivalently, by certain factorizations of pointed mapping classes; see Tables~\ref{decompositions.tab} and~\ref{surfacedecompositions.tab}.  To achieve this, we prove the following theorem, which allows us to put such surfaces in bridge position with respect to a bisection-with-divides structure on the Weinstein domain, introduced by \cite{IS-divides}.

\begin{theorem}\label{thm: main}
    Let $S$ be a positive ascending surface in a Weinstein domain equipped with the structure of a bisection with divides.  Then $S$ can be isotoped through positive ascending surfaces (with the isotopies restricting to transverse isotopies of the boundary)  to a surface $S'$ which is in bridge position with respect to the bisection with divides.
\end{theorem}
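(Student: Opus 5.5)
Our plan is to reduce the statement to the standard Weinstein handle picture and then import Hayden's description of positive ascending surfaces. Recall that a bisection with divides (in the sense of Islambouli--Starkston) presents the Weinstein domain $W$ as $W = H_1 \cup H_2$. Here $H_1 \cong \natural^k S^1\times B^3$ is a subcritical piece, $H_2$ is obtained from a collar by attaching Weinstein $2$-handles along Legendrian curves that arise from divides on the pages, and the Weinstein Lyapunov function $\phi$ is adapted to this splitting. A positive ascending surface $S$ is one on which $\phi|_S$ is Morse with no index-$2$ critical points. Its ascending flow lines are transverse to the contact level sets, and the critical points come with positive (symplectic) local models.

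First, we will use the ascending condition to isotope $S$, through positive ascending surfaces, so that all index-$0$ critical points of $\phi|_S$ lie at levels below all index-$1$ critical points. We will also move all of these critical points into prescribed collar regions near the contact hypersurface $\partial H_1$ that separates the two pieces of the bisection. This follows Hayden's argument and is a standard Morse-theoretic rearrangement, carried out with the symplectic local models so that positivity is preserved.

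Next, we put the index-$0$ disks in standard position. Near each minimum, $S$ is a small complex disk, so $S\cap H_1$ becomes a collection of trivial disks in the $1$-handlebody. We then need these disks to be trivial with respect to the Heegaard-type structure, i.e. boundary parallel into the central surface. For this we will use the open book on $\partial H_1$ supported by the divides structure. We braid the transverse link $S\cap\partial H_1$ relative to this open book; this is possible by transverse Alexander/Bennequin-type theorems (Pavelescu for arbitrary open books), and it is the reason the boundary isotopies are only transverse isotopies.

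The saddles (index-$1$ points) are then positive half-twisted bands, i.e. quasipositive bands attached to the braided transverse link. Following Hayden, we write the resulting surface as a quasipositive pointed mapping class factorization on the page. We then slide each band, by conjugation in the pointed mapping class group realized by ascending isotopy, until it is a short band in a single page near the binding. In $H_2$ the surface should appear as the trace of this pointed monodromy together with the $2$-handle attachments. Because the $2$-handles are attached along Legendrian lifts of divides disjoint from the marked points, $S$ can be pushed off the $2$-handle cores. After this, the part of $S$ in $H_2$ is a product with bands, and rearranging it to be a trivial tangle completes bridge position.

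The main obstacle is the compatibility of $S$ with the $2$-handles in $H_2$, i.e. ensuring that the isotopy avoids the handle cores and cocores while keeping $S$ symplectic and ascending. I would first try to shrink the $2$-handles to small neighborhoods of the Legendrian cores, using the Liouville flow backwards. Genericity then makes $S$ disjoint from the cocore disks and the descending disks, and the flow preserves positivity of the critical points. A secondary technical point is verifying that the sliding of bands can be performed through symplectic surfaces; for this we will rely on Hayden's result that quasipositive pointed factorizations correspond to ascending surfaces up to ascending isotopy.
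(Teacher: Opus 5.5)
Your opening steps match the paper. These are: rearranging critical points so that minima lie in the subcritical piece and saddles lie between $\Crit_1(\rho)$ and $\Crit_2(\rho)$ (Proposition~\ref{prop: weinstein_homotopy} and Remark~\ref{rem: ascending_isotopy}); reading off a transverse banded unlink $(U,\nu)$; braiding $U$ via Pavelescu; and making the bands $(1/2)$-framed braided bands via Hayden.

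\textbf{The gap.} It is the last step, ``rearranging it to be a trivial tangle completes bridge position,'' and this is where the content of the theorem lies. First, the structure is not quite as you describe. In a bisection with divides, your two pieces $H_1,H_2$ (the $W_1,W_2$ of Definition~\ref{multisec.def}) meet along a $3$-dimensional handlebody $H_2$, which is only half of $\partial W_1=H_1\cup\overline{H}_2$. The divides are the binding $\Gamma$, not the attaching curves; the $2$-handles are attached along $\Lambda$, which lies on a page in the core of $H_2$. Consequently $W_2$, a collar of $H_2$ plus the $2$-handles, is itself a filling of $\#^{k_2}S^1\times S^2$.

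Bridge position therefore requires two things of $S\cap W_2$, which is $\tau_2\times I$ together with the bands. It must be a trivial symplectic disk system, and its boundary $\tau_2\cup\overline{\tau_2[\nu]}$ must be a \textmaxsl unlink in $H_2\cup\overline{H}_3$. Band slides and conjugation/Hurwitz moves cannot guarantee this. Indeed $\chi(S\cap W_2)=b-|\nu|$, where $b$ is the number of strands, and these moves change neither $b$ nor $|\nu|$. More generally, any bridge bisection satisfies $\chi(S)=c_1+c_2-b$. So for the quasipositive trefoil surface given by $\sigma_1^3$ ($\chi=-1$, $b=2$), no arrangement of the three bands works: they all join the same two strands, and $S\cap W_2$ has Euler characteristic $-1$. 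The braid must be stabilized specifically to separate the band endpoints.

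\textbf{What is missing: duality of the bands.} The paper imposes (BB-3) and (BB-4): the band cores lie in a single page, have pairwise disjoint interiors, form a forest on the punctures, and avoid some choice of shadows for $\tau_2$. Proposition~\ref{prop:bandedunlink_implies_bandedbridgeposition} achieves this by induction on the bands, using at most two positive Markov stabilizations per band:
\begin{enumerate}
\item If necessary, stabilize once to produce an arc $\alpha$ from an endpoint of the band core to the binding, whose interior avoids the shadows and the earlier cores.
\item Stabilize along $\alpha$.
\item Slide the band over the new strand, so that its core ends at the fresh puncture.
\end{enumerate}
The payoff comes in two parts. Acyclicity makes $S\cap W_2$ a union of disks (Proposition~\ref{prop:banded_gives_bisection}). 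Disjointness from the shadow disks $E_t$ lets each leaf band be read as a positive destabilization; this shows $\tau_2\cup\overline{\tau_2[\nu]}$ is a \textmaxsl unlink (Lemma~\ref{lem: bridge_position_bands}).

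Finally, the obstacle you single out is not the real difficulty. Keeping $S$ off the $2$-handle cores and cocores is essentially automatic once the saddles lie below $\Crit_2(\rho)$ and the bands avoid $\Lambda$.
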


We introduce a second diagram for symplectic surfaces in the standard $B^4$ called \emph{braided tri-plane diagrams} similar to those in \cite{aranda25}. 
These diagrams are triplets of braided tangles satisfying certain compatibility conditions as in Figure~\ref{fig:fig_qp_trefoil}(right). 
In this case, Theorem \ref{thm: main} shows that every smooth complex curve intersected with the unit sphere in $\mathbb{C}^2$ can be described combinatorially by a braided tri-plane diagram (Corollary \ref{cor: intro_complex}).

\subsection{Scientific context} Trisections were introduced by Gay and Kirby \cite{gay2016trisecting} as a generalization of Heegaard splittings to 4-manifolds. These provide a combinatorial way to encode smooth structures by splitting them into 1-handlebodies and can be considered an alternative perspective on handle decompositions. In particular, trisections allow the smooth structure to be encoded as a collection of multicurves on a  surface. These were generalized to multisections by Islambouli--Naylor \cite{Islambouli-Naylor}. 

Compatibility between symplectic structures and handle decompositions of smooth manifolds was first studied by Weinstein \cite{weinstein_handles}, and has since been explored in the works of Eliashberg\cite{Eliashberg90a}, Gompf \cite{gompf1998handlebody}, Gay \cite{gay_transverse}, and Etnyre--Min--Piccirillo--Roy \cite{empr}, with the latter two works focusing on symplectic 4-manifolds. 
Lambert-Cole--Meier--Starkston \cite{lambert2021symplectic} used branched covering techniques coming from work of Auroux \cite{auroux2000_symplectic} and Auroux--Katzarkov \cite{AK00_branched} to make progress towards characterizing trisections for closed symplectic 4-manifolds.
However, this perspective does not directly encode a natural handle decomposition.
In fact, it is in general not known whether symplectic 4-manifolds admit handle decompositions that are compatible with the symplectic structure in any meaningful way. For instance, the Weinstein trisections of \cite{lambert2021symplectic} do not encode the symplectic structure in a combinatorial diagram. 

Recently, Islambouli--Starkston \cite{IS-divides} defined  {\em multisections with divides} for Weinstein domains, a particular kind of symplectic manifold equipped with a Morse function admitting a gradient-like vector field that is the Liouville flow for the symplectic form (precise definition in Section~\ref{sec: background}). 
The multisection-with-divides decomposition is compatible with a Weinstein handle decomposition. 
The Weinstein structure is encoded as a collection of multicurves on a closed surface, which includes both the data of the corresponding smooth multisection, as well as the {\em dividing set} on a convex representative of the surface.

Meier--Zupan \cite{meier2017bridge, meier2018bridge} introduced the notion of bridge positions for smooth surfaces inside 4-manifolds, generalizing techniques used to study knots in 3-manifolds. Since then, a number of papers \cite{aranda2024bridge, PLC_surface, IKLC, meier_braided} have studied how to place smooth (or symplectic) surfaces in bridge position with respect to a trisection-like decomposition of the 4-manifold. 
This not only gives a combinatorial way to encode the data of such embedded submanifolds via systems of curves on surfaces, but also characterizes such submanifolds: the combinatorial decomposition can be used to reconstruct a smooth (or symplectic) surface. 
This has led to a fruitful area of research in using bridge trisections to compute invariants of knotted surfaces in 4-manifolds \cite{cahn23algorithms,BCTT22_Distance, ST_keicolourings, APT_kirbythompson24, APZ_kirbythompson23, aranda23pants}. Bridge trisections have further been applied to obtain diagrammatic descriptions of surgery operations and exotic 4-manifolds \cite{KM_pricetwist, naylor_twist}. It is established \cite{JMM_knottedsurface} that the theory of bridge trisections recovers several facets of classical knotted surface theory.  In \cite{PLC_surface}, Lambert-Cole uses the notion of transverse bridge position to characterize symplectic surfaces in $\mathbb{C}\mathrm{P}^2$. 

\subsection{Main results} 
The overarching goal of this paper is to answer the following:

\begin{question}\label{qn: symp_bridge}
    Can we place a symplectic surface $S$ in a Weinstein domain $W$ in 
    bridge position respecting the geometry of  a multisection-with-divides decomposition of $W$?
\end{question}
A positive answer to this question would imply that symplectic surfaces can be encoded combinatorially by a system of curves on a surface.
In this article, we focus on a particular family of symplectic surfaces inside Weinstein domains, known as {\em positive ascending} surfaces (Def.~\ref{pos_ascending.def}). In Theorem~\ref{thm: main}, we show that these can be placed in bridge position with respect to a multisection-with-divides decomposition. 
In some cases, it is known that symplectic surfaces are always positive ascending (up to isotopy); we elaborate on this in Section~\ref{surface_categories.sec}.

To prove Theorem~\ref{thm: main}, we introduce the notion of a {\it transverse banded unlink} in a Weinstein domain (Sec.~\ref{sec:transverse_banded_unlinks}), and show that positive ascending surfaces are naturally described by such banded unlinks.  We then introduce a notion of bridge position for transverse banded unlinks, and show that a bridge-position transverse banded unlink gives rise to a bisection of the original positive ascending surface.  In the case where the Weinstein domain is $B^4$ with its unique Weinstein structure, this bisection can be described particularly simply, by a {\em braided tri-plane diagram} (Def.~\ref{def: braided_tri-plane}).

The following theorem can be viewed as a partial converse to Theorem~\ref{thm: main}.

\begin{theorem}\label{thm: intro_characterization}
    Let $(W,S)$ be a pair where $W$ is a Weinstein domain equipped with a bisection-with-divides decomposition, and $S$ is a symplectic surface in bridge-bisected position with respect to that decomposition. 
    Then $S$ is symplectically isotopic (not necessarily rel boundary) to a positive ascending surface in $W$, with respect to the Weinstein structure compatible with the bisection-with-divides.
\end{theorem}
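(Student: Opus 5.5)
The plan is to run the bridge-position dictionary backwards. From the bisected pieces of $S$ we build a transverse banded unlink. Then, using the correspondence between positive ascending surfaces and transverse banded unlinks from Section~\ref{sec:transverse_banded_unlinks}, we realize that banded unlink by a positive ascending surface. Finally, we check that this surface is symplectically isotopic to $S$.

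\textbf{Step 1: read off the two halves.} A bisection with divides splits $W=W_1\cup W_2$. Here $W_1$ is a neighborhood of the index $0$ and $1$ handles, a $4$-dimensional $1$-handlebody with its standard Weinstein structure. The piece $W_2$ is attached along the Heegaard-type surface and corresponds to the $2$-handles, attached along Legendrian curves recorded by the divides. Bridge-bisected position means three things:
\begin{itemize}
\item $S\cap W_1$ is a collection of trivial symplectic disks, each boundary-parallel;
\item $S\cap W_2$ is a trivial (braided) tangle times an interval, together with disks;
\item $S\cap \partial W_1$ is a transverse braided unlink in the contact boundary.
\end{itemize}

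\textbf{Step 2: find the ascending Morse function.} The trivial disks in $W_1$ can be symplectically isotoped, rel boundary, to flat disks that are transverse to the Liouville field. On them the Weinstein Morse function restricts with only index-$0$ critical points. The piece of $S$ in $W_2$, and in the collar where bridge points meet, is where bands appear. Each saddle of $S$ with respect to the Weinstein function comes from a half-twisted band whose core is a transverse arc. Its positivity (as opposed to negativity) should follow from the orientation compatibility of $S$ with the symplectic form at that tangency. That compatibility is exactly what positive ascending requires at critical points of the restricted function.

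\textbf{Step 3: normalize near the $2$-handles.} Near the cocores of the $2$-handles, we isotope $S$ by a Liouville-flow-equivariant symplectic isotopy so that $S$ stays transverse to the level sets away from its finitely many critical points. The isotopy is allowed to move $\partial S$ by transverse isotopy, which is why the result is not rel boundary. This produces a surface that is ascending, meaning the restricted function has no interior maxima and only positive critical points.

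\textbf{Step 4: conclude.} By the characterization of positive ascending surfaces via transverse banded unlinks (together with Hayden's quasipositive description), the resulting surface is positive ascending.

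\textbf{Main obstacle.} The hard part is Step 3. A symplectic surface in bridge position need not be transverse to the Liouville field in the $W_2$ part, so we need a symplectic isotopy that makes it ascending without introducing negative critical points. I would first try a local model: in a Darboux chart around each critical point, use the fact that a symplectic surface's tangency with a contact-type level set is positive by symplecticity. Then straighten $S$ via a cut-off Moser argument. Uniqueness of trivial symplectic disks up to isotopy in Weinstein $1$-handlebodies would handle the remaining pieces.
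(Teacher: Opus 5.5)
Your opening plan, to build a transverse banded unlink from the bisected pieces, is the right one. Steps~2--3 never carry it out, though, and the route they take instead does not work.

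\textbf{Steps~2--3.} Step~2 already assumes that $\rho|_S$ is Morse with saddles coming from half-twisted bands. That means the level sets of $S$ are transverse links, which is the conclusion you are trying to reach. Step~3 then tries to get the ascending property from local symplectic isotopies (Darboux charts and a cut-off Moser argument). Near an isolated tangency you can rotate $T_pS$ to $\xi_p$, and positivity there is not the issue. But being ascending also requires every regular level $S\cap\rho^{-1}(c)$ to be a positively transverse link. That is a global condition on $S$, and local modifications near critical points do not control it. Notice that Step~3 uses nothing about bridge position. If it worked, it would show that every symplectic surface is symplectically isotopic to a positive ascending one, which is Conjecture~\ref{conj: sympl_pos_asc}. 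That conjecture is open in general, and even in $B^4$ it needs the Boileau--Orevkov theorem.

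\textbf{Step~1.} Step~1 is also off: $S\cap W_2$ is a trivial symplectic disk system, not ``a tangle times an interval together with disks.'' Extracting bands from that disk system is exactly the nontrivial step you skip.

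\textbf{The missing idea.} You need to use the bridge position combinatorially, at the level of the spine $(H_1,H_2,H_3;\tau_1,\tau_2,\tau_3)$, which determines $S$ up to symplectic isotopy by uniqueness of the symplectic disk fillings. The paper's argument runs as follows.
\begin{itemize}
\item It applies positive braided perturbations to the spine. These realize boundary stabilizations of $S$ (Lemma~\ref{lem:braid_stab_to_surface_stab}). This is where ``not rel boundary'' comes from, not from a Liouville-flow normalization near the $2$-handles.
\item The braids $\tau_1\cup\overline{\tau}_2$ and $\tau_2\cup\overline{\tau}_3$ are max-$sl$ unlinks. Pavelescu's common-stabilization theorem \cite{pavelescu2012braiding} then shows that, after enough perturbations, both are positive Markov stabilizations of meridians of the binding.
\item \cite[Lem.~5.3]{aranda25} then produces braided half-twisted bands $\nu$ with $\tau_2[\nu]$ braid isotopic to $\tau_3$ (Lemma~\ref{lem:bridge_to_banded}).
\end{itemize}
The result is a transverse banded unlink in banded bridge position, equivalently a quasipositive factorization in a positive allowable open book. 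Hayden's theorem \cite{hayden21} then shows the resulting surface, which is symplectically isotopic to the stabilized $S$, is positive ascending.
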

We note that the isotopy in the statement of Theorem~\ref{thm: intro_characterization} includes boundary stabilizations (Def.~\ref{boundary_stab.def}) and, in particular, involves transverse isotopy of the boundary transverse link.

\begin{figure}[h]
\centering
\begin{tikzpicture}[
    box/.style={
        draw,
        rounded corners,
        minimum width=3cm,
        minimum height=1.2cm,
        align=center,
        font=\sffamily
    },
    arrow/.style={
        thick,
        -{Latex[length=3mm]}
    }
]

\node[box] (A) {Symplectic surface\\ in a Weinstein domain};
\node[box, right=4cm of A] (B) {Bridge bisected surface with divides \\in Weinstein bisection with divides};
\node[box, below=3cm of A] (C) {Positive ascending surface\\ in Weinstein domain};
\node[box, right=4cm of C] (D) {Bridge bisected surface with divides\\ up to boundary stabilizations\\ in Weinstein bisection with divides};
\draw[double,-{Latex}] (B) --node[midway, above] {by construction} (A);

\draw[arrow, dashed,shorten >=20pt, shorten <=20pt] ($(A) + (3pt,0)$) --node[midway, right] {conjecture} ($(C) + (3pt,0)$);

\draw[double,-{Latex},shorten >=20pt, shorten <=20pt] ($(C) + (-3pt,0)$) --node[midway, left] {by construction} ($(A) + (-3pt,0)$);
\draw[double,-{Latex}] (B) --node[midway, above] {by definition} (D);
\draw[double,-{Latex}] (C) --node[midway, above] {Theorem~\ref{thm: main}} (B);
\draw[double,-{Latex}] (D) --node[midway, above] {Theorem~\ref{thm: intro_characterization}} (C);
\end{tikzpicture}
\caption{Relationships between symplectic, positive ascending, and bisected surfaces with divides in a Weinstein domain.}
\label{surface_flowchart.fig}
\end{figure}

Theorems~\ref{thm: main} and \ref{thm: intro_characterization} can be considered 4-dimensional generalizations of Pavelescu's result \cite{pavelescu2012braiding} that any transverse knot in a contact 3-manifold can be braided in a supporting open book, and any such braid closure corresponds to a transverse knot. Figure \ref{surface_flowchart.fig} summarizes the relationships between positive ascending and symplectic surfaces in bridge position.

Weinstein domains are equivalent to Stein domains \cite{Cieliebak-Eliashberg-stein-weinstein-book}, and thus are equipped with a complex structure. Complex curves, i.e.\ complex codimension 1 curves inside Stein domains, are all positive ascending \cite{hayden21}. Hence the following corollary holds.

\begin{corollary}\label{cor: intro_complex}
    Any complex curve in a 4-dimensional Stein domain $(W, J)$ can be placed in a bridge-bisection-with-divides position with respect to a bisection-with-divides decomposition compatible with a Weinstein structure on $(W, J)$.
\end{corollary}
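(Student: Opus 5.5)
The corollary follows by combining three inputs: the equivalence of Stein and Weinstein structures, Hayden's theorem that complex curves are positive ascending, and Theorem~\ref{thm: main}.

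\textbf{Step 1: choose compatible structures.} Start from the Stein domain $(W,J)$ and pick a strictly plurisubharmonic exhausting function $\phi$. By \cite{Cieliebak-Eliashberg-stein-weinstein-book}, $\omega = -dd^{\mathbb{C}}\phi$, the gradient of $\phi$ with respect to the associated metric, and $\phi$ itself form a Weinstein structure on $W$. This structure is compatible with $J$.

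\textbf{Step 2: choose a bisection with divides.} By Islambouli--Starkston \cite{IS-divides} (reproved later in this paper via branched covers and Loi--Piergallini), this Weinstein domain admits a bisection with divides compatible with the Weinstein structure. Their construction may first modify the Weinstein structure by a Weinstein homotopy. I would track this carefully, since the class of positive ascending surfaces is defined relative to the Weinstein Morse function.

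\textbf{Step 3: make the curve positive ascending.} Let $C \subset W$ be a properly embedded complex curve, transverse to $\partial W$. By \cite{hayden21}, $C$ is positive ascending with respect to the Stein Weinstein structure, possibly after a small perturbation of $\phi$ making $\phi|_C$ Morse with the required index behavior. Hayden's argument restricts $\phi$ to $C$. That restriction is subharmonic, so it has no interior maxima, and the critical points of $\phi|_C$ have the positivity required by Def.~\ref{pos_ascending.def}.

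\textbf{Step 4: apply the main theorem.} Theorem~\ref{thm: main} now isotopes $C$ through positive ascending surfaces to a surface $C'$ in bridge position relative to the bisection with divides. This gives the corollary, with "placed" understood up to this isotopy, which is transverse on the boundary.

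\textbf{Main obstacle.} The key issue is keeping Steps 2 and 3 consistent. If the bisection with divides is compatible only with a Weinstein structure homotopic to the Stein one, $C$ must remain positive ascending for that new structure. I would handle this in one of two ways:

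1. Note that the corollary only asks for compatibility with \emph{a} Weinstein structure on $(W,J)$. Then choose the bisection with divides directly for the Weinstein structure coming from a $J$-plurisubharmonic function.
2. If that is not possible, argue that the Weinstein homotopy can be realized through $J$-convex functions, using Cieliebak--Eliashberg's $J$-convex surgery and homotopy results. Hayden's theorem then applies at the end of the homotopy, since it holds for any $J$-convex Morse function.

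I expect option 2 to need care only in generic perturbations making $\phi|_C$ Morse.
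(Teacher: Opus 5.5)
Your proposal is correct and is essentially the paper's argument: the Stein--Weinstein correspondence of Cieliebak--Eliashberg, Hayden's theorem that complex curves are positive ascending, and then Theorem~\ref{thm: main}. The paper does not discuss the compatibility issue you raise; it implicitly takes your option 1, building the bisection $\Tcal(\Lambda,k,\Sigma)$ from a Kirby--Weinstein diagram of the Weinstein structure induced by the chosen plurisubharmonic function (cf.\ Theorem~\ref{thm:existence_bisections}), so you can drop option 2, which would need more care because lifting Weinstein homotopies to Stein homotopies generally varies $J$.
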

    
It is not known if every symplectic surface in a Weinstein domain is isotopic through symplectic surfaces to a complex curve, or if it is symplectically isotopic to a positive ascending surface. 
It is true in the case of the 4-ball by Boileau--Orevkov \cite{boileau-orevkov}, and in the case of Weinstein domains supported by planar nearly-Lefschetz fibrations \cite{BRW:PANLF}, that any symplectic surface is quasipositive, and hence positive ascending. Some of these results are also claimed by work-in-progress of Baykur--Etnyre--Hayden--Hedden--Kawamuro--Van Horn-Morris. 
Thus the following is a consequence of Theorems~\ref{thm: main} and \ref{thm: intro_characterization}; restricted to the 4-ball (which is covered by the result), this can be regarded as a relative version of the main theorem of \cite{PLC_surface}:

\begin{corollary}\label{cor: intro_b4}
    Any symplectic surface inside a Weinstein domain supported by a planar nearly-Lefschetz fibration can be placed in a bridge-bisection-with-divides position with respect to a bisection-with-divides decomposition. Conversely, any smooth surface that can be represented by a braided tri-plane diagram for a multisection-with-divides decomposition of the 4-ball is a symplectic surface.
\end{corollary}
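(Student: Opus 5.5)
The corollary has two halves, and I would prove them separately by combining Theorems~\ref{thm: main} and~\ref{thm: intro_characterization} with the cited classification results.

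\emph{First half.} Let $W$ be supported by a planar nearly-Lefschetz fibration and let $S\subset W$ be a properly embedded symplectic surface.
\begin{enumerate}
\item By \cite{BRW:PANLF}, and by Boileau--Orevkov \cite{boileau-orevkov} in the case $W=B^4$, $S$ is symplectically isotopic to a quasipositive surface.
\item I would verify that a quasipositive surface in this setting is positive ascending in the sense of Definition~\ref{pos_ascending.def}. The quasipositive surface is built from braided disks (minima of the radial Weinstein Morse function) attached by positive half-twisted bands (index-one critical points). Each of these pieces has the required local model, so positivity and the ascending condition can be read off piece by piece. This matches Hayden's correspondence \cite{hayden21} between positive ascending surfaces and quasipositive pointed mapping classes.
\item Fix any bisection with divides compatible with the Weinstein structure. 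Its existence is guaranteed by \cite{IS-divides}; in the planar nearly-Lefschetz case one can also construct it directly from the fibration.
\item Apply Theorem~\ref{thm: main} to the positive ascending representative. This places it in bridge position through positive ascending, hence symplectic, surfaces. Composing with the isotopy from step 1 gives the claim.
\end{enumerate}
The subtle point is the isotopy class. The corollary allows $S$ to be replaced by a symplectically isotopic surface, and the boundary is allowed to move by transverse isotopy, so concatenating the two isotopies is legitimate.

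\emph{Second half.} Take a braided tri-plane diagram for a multisection-with-divides decomposition of $B^4$. By Definition~\ref{def: braided_tri-plane} and the construction reconstructing surfaces from such diagrams, it determines a surface in bridge-bisected position.
\begin{enumerate}
\item Realize the surface symplectically. In each sector, the trivial tangles are braided, i.e.\ transverse to the pages. The bridge disks in each piece can therefore be chosen symplectic, as in the ``by construction'' arrow of Figure~\ref{surface_flowchart.fig}.
\item Glue the pieces, smoothing corners along the central surface while keeping the surface symplectic.
\item Invoke Theorem~\ref{thm: intro_characterization}. It gives a symplectic isotopy to a positive ascending surface. This is more than the statement needs, but it confirms that the smooth surface is symplectic up to isotopy, which is the meaning of ``is a symplectic surface'' here.
\end{enumerate}

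\emph{Main obstacle.} I expect the main difficulty in the converse to be step 2 of the second half: making the corner-smoothing at the bisecting surface symplectic. The braidedness of the tangles must give positive transversality to the characteristic foliation (dividing set) on both sides. I would first try the local model near the central surface used in the proof of Theorem~\ref{thm: intro_characterization}. If that model does not apply directly, I would instead cite that theorem's hypothesis verification, showing that braided tri-plane diagrams produce surfaces in bridge-bisected position in exactly the sense the theorem requires.
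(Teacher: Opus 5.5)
Your proposal follows the paper's own argument. The first half is Boileau--Orevkov and \cite{BRW:PANLF} (symplectic $\Rightarrow$ quasipositive, hence positive ascending) followed by Theorem~\ref{thm: main}, and the converse is the ``by construction'' arrow, i.e.\ Proposition~\ref{prop: spine_determines_symplectic surface}, with Theorem~\ref{thm: intro_characterization} as the upgrade to positive ascending. Two justifications in the converse need fixing, though:

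\begin{itemize}
\item The sector disks can be chosen symplectic because $\tau_1\cup\overline{\tau}_2$ and $\tau_2\cup\overline{\tau}_3$ are \textmaxsl unlinks, via Proposition~\ref{uniquedisks.prop}. Braidedness alone is not enough: a braided unknot with $sl<-1$ bounds no symplectic disk.
\item Theorem~\ref{thm: intro_characterization} already assumes $S$ is symplectic, so it cannot serve as your fallback for the gluing step. The paper absorbs that gluing into Proposition~\ref{prop: spine_determines_symplectic surface}, using openness of the symplectic condition as in Proposition~\ref{realizing-symplectic-surface.prop}. Nor is there a ``local model near the central surface'' in the proof of Theorem~\ref{thm: intro_characterization} to borrow.
\end{itemize}
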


In~\cite{baykur_VHM_infinite}, Baykur and Van Horn-Morris produced examples of infinitely many pairwise distinct symplectic fillings in $B^4$ of the same quasipositive braid closure in the 3-sphere. In Example~\ref{Baykur_VHM_examples}, we describe how to obtain braided tri-plane diagrams for these surfaces; see Figure~\ref{fig:BVM_5}.

The foundational work of \cite{hayden21} states that positive ascending surfaces can be represented by quasipositive factorizations in positive allowable open books. Our Theorem~\ref{thm: main} says that such factorizations admit simple decompositions as compositions of two quasipositive pointed mapping classes, each representing braided unlinks with maximal self-linking number. Along the way, we obtain several geometric and combinatorial descriptions of positive ascending surfaces, which yield decompositions of those surfaces into simple pieces:

\begin{theorem}\label{thm:equivalent_descriptions}
    Let $K$ be a quasipositive transverse knot in the boundary of a Weinstein domain $(W,\omega)$, with  associated pointed mapping class $(\phi, F)$. The following are 
    descriptions of a positive ascending surface $S$ in $W$ with boundary $K$:
    \begin{enumerate}
    \item[(BUD)] a banded unlink diagram for $S$ in a Kirby--Weinstein diagram for $(W,\omega)$; 
        \item[(QPF)] a quasipositive factorization of $\phi$;
         \item[(BD)] a bisection with divides, i.e.\ a decomposition of $S$ into two trivial symplectic disk systems whose boundaries are \textmaxsl unlinks in a connect sum of standard $S^1 \times S^2$'s;
         \item[(SDD)] a shadow diagram with divides, i.e., an arc-and-curve system on a closed orientable surface satisfying the criteria in Definition~\ref{shadowdiagram.def};
        \item[(PMC)] a factorization of $\phi$ into two quasipositive pointed mapping classes, i.e., $(\phi_1, F_1)$ and $(\phi_2, F_2)$ such that they each correspond to \textmaxsl unlinks in a connect sum of standard $S^1 \times S^2$'s as in Definition~\ref{def:PMC}.
    \end{enumerate}
    
\end{theorem}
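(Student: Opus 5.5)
The plan is to exhibit explicit, algorithmic passages between the five descriptions, arranged in a cycle anchored at the positive ascending surface $S$. \textbf{(QPF)} and \textbf{(BUD)} are the classical ends. Hayden's theorem \cite{hayden21} says positive ascending surfaces with boundary $K$ correspond to quasipositive factorizations of the pointed mapping class $\phi$ in a positive allowable open book for $\partial W$. So (QPF) describes $S$, and I will take this as the base point.

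First, (QPF) $\Leftrightarrow$ (BUD). Each quasipositive factor of $\phi$, namely a conjugate of a positive half-twist about an arc between two marked points, gives a positive half-twisted band attached to a braided max-$sl$ unlink. Here the unlink is the trivial pointed mapping class on $F$. Drawing the open book compatibly with a Kirby--Weinstein diagram of $W$ yields a transverse banded unlink diagram. Conversely, given a transverse banded unlink, I would braid the unlink and its bands with respect to the open book, using Pavelescu's braiding theorem \cite{pavelescu2012braiding} and transverse isotopy. Reading off the bands then recovers a quasipositive factorization. The fact that the resulting surface is positive ascending should follow from the construction in Section~\ref{sec:transverse_banded_unlinks}.

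Second, (BUD) $\Rightarrow$ (BD). This is Theorem~\ref{thm: main}: isotope the transverse banded unlink into bridge position with respect to the bisection with divides. The two halves of the Weinstein bisection are $1$-handlebodies whose boundaries are connected sums of standard $S^1\times S^2$'s. The surface then splits as two trivial symplectic disk systems, one containing the minima together with the bands, and the other containing the maxima. Their boundaries are max-$sl$ unlinks, because trivial symplectic disks bound exactly the max-$sl$ unlinks. Theorem~\ref{thm: intro_characterization} gives the converse, (BD) $\Rightarrow$ positive ascending.

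Third, (BD) $\Leftrightarrow$ (SDD). The central surface of the bisection carries the bridge points. Each trivial disk system is determined up to isotopy by a collection of shadow arcs on that surface, obtained by pushing the disks into the surface rel the bridge points. Together with the cut curves and dividing set of \cite{IS-divides}, these arcs form the arc-and-curve system of Definition~\ref{shadowdiagram.def}. The converse direction is the standard one: reconstruct each disk system by lifting the arcs to disks in the handlebody.

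Fourth, (BD) $\Leftrightarrow$ (PMC). Each half, consisting of a max-$sl$ unlink in $\#S^1\times S^2$ braided in the induced open book, gives a quasipositive pointed mapping class $(\phi_i,F_i)$. Gluing the halves composes them, so that $\phi=\phi_1\phi_2$. Conversely, such a factorization rebuilds the two disk systems.

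The main obstacle I expect is the compatibility in the (BD)/(PMC) steps: I need to check that the open books induced on the two boundary pieces match the given one, and that the composition really recovers $\phi$ rather than a conjugate or stabilization. I would handle this by tracking the pages through the bisection explicitly, and by allowing Markov stabilizations, which preserve quasipositivity.
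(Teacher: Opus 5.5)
Your chain of correspondences is the paper's. Its proof of this theorem is a list of pointers:
\begin{itemize}
\item \cite{hayden21} for (QPF);
\item \cite{etnyregolla} together with Propositions~\ref{realizing-symplectic-surface.prop} and~\ref{prop: construct_pos_asc} for (BUD);
\item Theorems~\ref{thm: main} and~\ref{thm: intro_characterization} (Section~\ref{sec:proofs}) for (BD);
\item Propositions~\ref{prop:SDD} and~\ref{prop:PMC} for (SDD) and (PMC).
\end{itemize}

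\textbf{The split of $S$ in (BUD)$\Rightarrow$(BD) is wrong.} The one place where you describe the mechanism rather than cite is this step. You put ``the minima together with the bands'' in one sector and ``the maxima'' in the other.

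A positive ascending surface has no index-$2$ critical points. Suppose $p$ were a positive critical point that is a local maximum of $\rho|_S$, and let $D=S\cap\{\rho\geq t\}$ be the small disk around it. Then $\int_{\partial D}\lambda=\int_D\omega>0$. The level circle, oriented as the boundary of the sublevel set, is $-\partial D$, so it would be negatively transverse.

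Hence your first piece would be essentially all of $S$, and the second would be empty. For the trefoil of Figure~\ref{fig:fig_qp_trefoil}, the first piece is a once-punctured torus, not a disk system.

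In Proposition~\ref{prop:banded_gives_bisection} the split is different:
\begin{itemize}
\item $W_1$ contains only the minima, namely the disks bounded by $U=\tau_1\cup\overline{\tau}_2$.
\item The bands lie in the collar of $H_2$ inside $W_2$, so $S\cap W_2$ is $\tau_2\times I$ with the bands attached.
\item This is a disk system precisely because the band cores form a forest~\ref{item:bb3}.
\item Its boundary $\tau_2\cup\overline{\tau_2[\nu]}$ is a max-$sl$ unlink by Lemma~\ref{lem: bridge_position_bands}. That lemma is a leaf-by-leaf positive destabilization using the dual shadows~\ref{item:bb4}.
\item Only then does Proposition~\ref{uniquedisks.prop} identify both pieces as trivial symplectic disk systems.
\end{itemize}
Your sentence ``their boundaries are max-$sl$ unlinks, because trivial symplectic disks bound exactly the max-$sl$ unlinks'' runs this implication backwards.

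\textbf{Smaller points.} Your hand-made (QPF)$\Leftrightarrow$(BUD) sketch, which the paper only cites, has two issues.
\begin{itemize}
\item Pavelescu's theorem braids $U$, but it does not make the bands $(1/2)$-framed braided bands along arcs in a page. That needs \cite[Thm.~1.3]{hayden21} after stabilization, as in the proof of Proposition~\ref{prop:bandedunlink_implies_bandedbridgeposition}.
\item Taking the unlink to be the trivial pointed mapping class (a union of binding meridians) misses banded unlinks whose components link $\Lambda$, as in Example~\ref{2_compo_unlink_one_band_example}. These enter the pointed monodromy through Dehn-twist and point-push factors (Definition~\ref{def:qp}; compare $\hat\phi_{12}$ there), not through half-twists.
\end{itemize}

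The converse from (BD) yields a positive ascending surface only after boundary stabilizations. That is, the isotopy is not rel boundary (Theorem~\ref{restated_thm: intro_characterization}).

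The compatibility you flag as the main obstacle for (PMC) is built into Definition~\ref{multisec.def}. All $H_i$ share $\Sigma$ and $\Gamma$, and $\phi=\phi_{2,3}\circ\phi_{1,2}$ holds by definition.
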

\begin{proof}
    The correspondence with BUD and QPF was done in \cite{etnyregolla} and \cite{hayden21}, respectively. Section~\ref{sec:proofs} and Propositions~\ref{prop:SDD} and \ref{prop:PMC} discuss the equivalences with BD, SSD, and PMC diagrams, respectively. 
\end{proof}

There are algorithms relating these descriptions. We take as input a banded unlink diagram for $S$ in a Kirby--Weinstein diagram for $(W,\omega)$.  We then place this diagram in banded bridge position to obtain a bisection with divides for $S$. 
This bisection can be encoded in a shadow diagram with divides, from which one can read off the factorization of $\phi$ into two quasipositive pointed mapping classes.
In Example~\ref{2_compo_unlink_one_band_example}, we run through these procedures for a simple symplectic disk inside the cotangent disk bundle of $S^2$. 
Figure~\ref{fig:bandedinkwdiagram_1} shows a BUD for this surface, which is braided in Figure~\ref{fig:braidedbandedinkwdiagram}; SDD and PMC descriptions are shown in Figures~\ref{fig:shadowdiagram} and~\ref{fig:pointedmonodromy}, respectively.

Another well-known construction of symplectic surfaces in Weinstein domains are multisections\footnote{The term ``multisection'' has different meanings in the context of trisections and Lefschetz fibrations; the intended meaning will be clear from context.} of Lefschetz fibrations \cite{baykur2016multisections}.
A multisection $\Sigma$ for a Lefschetz fibration $\Pi: W \to D^2$ is a surface for which $\Pi|_\Sigma$ is a branched covering. 
As noted in the following corollary, they are also positive ascending with respect to the Weinstein structure compatible with the Lefschetz fibration, and hence admit bridge-bisections-with-divides decompositions with respect to the associated bisection with divides.

\begin{corollary}\label{cor: intro_multisection}
    Let $\Sigma$ be a multisection of a Lefschetz fibration $\Pi: W \to D^2$, where $W$ is a Weinstein domain. Then $\Sigma$ can be placed in a bridge-bisection-with-divides position with respect to the bisection-with-divides decomposition associated to this Weinstein structure.
\end{corollary}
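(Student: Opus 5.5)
The plan is to reduce the corollary to Theorem~\ref{thm: main}. That means showing that a multisection $\Sigma$ of $\Pi: W \to D^2$ is, after an isotopy through symplectic surfaces that is transverse on the boundary, a positive ascending surface for the Weinstein structure compatible with $\Pi$. Theorem~\ref{thm: main} then applies directly to the bisection with divides associated to that Weinstein structure.

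First we fix the Weinstein structure. We may assume $\Pi$ is a positive allowable Lefschetz fibration, as is implicit in saying the Weinstein structure is compatible with it. Following the standard construction (Gompf, Akbulut--Ozbagci), $W$ is built from $F\times D^2$, which is a union of $0$- and $1$-handles. The $2$-handles are attached along vanishing cycles, which are Legendrian realized on pages of the boundary open book. We take the Weinstein Morse function to be of the form $f = |\Pi|^2 + \epsilon h$ near the regular fibers, with $h$ a Morse function on the fiber $F$. The Liouville vector field $V$ is then chosen so that it is roughly the radial lift in the base plus a fiberwise Liouville field, and it is modified near the Lefschetz critical points as in the usual model.

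Second, we check the positive ascending condition from Def.~\ref{pos_ascending.def}. Recall this requires $S$ symplectic, $f|_S$ Morse with critical points only of index $0$ and $1$ (or the relevant ascending condition), and $V$ positively transverse along level sets, so that each regular level meets $S$ in a transverse link. Since $\Pi|_\Sigma$ is a branched cover, $\Sigma$ is transverse to the fibers away from finitely many simple branch points, and there it is locally modeled on $w = z^2$. We perturb $\Sigma$ by a small isotopy so that branch points avoid the Lefschetz critical values and $\Sigma$ avoids the critical points of $\Pi$. Because $\Sigma$ is symplectic and positively transverse to fibers, it is symplectic for the compatible form after rescaling the fiberwise part by a large multiple of $\Pi^*\omega_{D^2}$ (Thurston trick).

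Third, we analyze $f|_\Sigma$. Away from branch points, $\Sigma$ is locally a graph over the base, so $f|_\Sigma \approx |\Pi|^2$ has no critical points except possibly over $0$. We arrange, by isotopy in the base, that $\Pi(\Sigma\cap\Pi^{-1}(0))$ consists of $d$ index-$0$ minima in distinct fiber positions. At each branch point, $|w|^2 = |z|^4$ composed with a translation of the base gives, after perturbation, a single index-$1$ critical point, just as a half-twisted band does. Each regular level $f^{-1}(t)\cap\Sigma$ is a closed braid in the open book $\partial(\Pi^{-1}(D_t))$, hence a transverse link; this gives positivity of the ascending cobordism. Equivalently, as in Hayden, the braid monodromy of $\Sigma$ is a product of conjugates of positive half-twists, i.e.\ a quasipositive factorization (QPF) of the pointed mapping class. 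By Theorem~\ref{thm:equivalent_descriptions} and \cite{hayden21}, this identifies $\Sigma$ with a positive ascending surface.

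The main obstacle is compatibility near the Lefschetz critical points. There the radial Liouville field is modified, and the $2$-handle cores lie there. I would first try to keep $\Sigma$ in the region where $f$ is the product model, by pushing branch points and $\Sigma$ radially away from the critical points. If that fails, I would instead work with Hayden's description and directly cite that braided surfaces over the base, with positive branch points, are positive ascending.
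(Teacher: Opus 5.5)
Your proposal is correct and follows essentially the same route as the paper: check that the multisection is positive ascending for the Weinstein structure compatible with $\Pi$, then apply Theorem~\ref{thm: main}. The checks are index-$0$ minima near the center, a single positive index-$1$ critical point at each simple branch point in the model $w=z^2$, no other critical points, and braided quasipositive level links. Near a branch point over $w_0\neq 0$, your function $|\Pi|^2+\epsilon h$ is the real part of an affine function of $\Pi$, which matches the paper's local ``real part of $\Pi$'' description; and your Thurston-trick step is unnecessary, since being positive ascending is not a symplectic condition.
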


\begin{proof}
    The branch points of $\Pi|_\Sigma$ map to the interior of $D^2$, and can all be assumed to be simple. As discussed in \cite[Section 3]{baykur2016multisections}, near such a branch point, locally the map looks like $\Pi:\mathbb{C}^2 \to \mathbb{C}$ given by $\Pi(z_1,z_2)=z_1$, such that $\Sigma$ is locally parametrized as $\{(z^2,z)\}$. 
    The Weinstein structure on $W$ in this neighborhood is encoded by the Morse function given by the real part of $\Pi$. Restricted to $\Sigma$, this clearly restricts to a Morse function and the branch point corresponds to a positive index-1 critical value. 
    Also, we can assume that the Weinstein Morse function on $W$ has a single index-0 critical point, and in a neighborhood of that critical point, we can parametrize $\Sigma$ so that the Morse function restricted to $\Sigma$ has $n$ index-0 critical points, where $n$ is the degree of the multisection. Away from the branch points, it can be seen that the real part of $\Pi$ restricts to a Morse function with no critical points on a multisection $\Sigma$. Further, it follows from \cite{baykur2016multisections} that the intersection of $\Sigma$ with any level set of this Morse function is a quasipositive link. It thus follows that $\Sigma$ is a positive ascending surface, and so by Theorem~\ref{thm: main}, it can be placed in a bridge position with respect to the bisection-with-divides decomposition of $W$.
\end{proof}

Any Weinstein domain is a branched cover over the standard Weinstein 4-ball, branched over a symplectic surface \cite{LP_branched}. In Section~\ref{sec: palfs_branched_covers}, we explain how a multisection-with-divides decomposition can be pulled back to the branched cover if the branch locus is in bridge position. Thus, using Corollary~\ref{cor: intro_b4}, we give an alternate proof of the main theorem of \cite{IS-divides}, that any Weinstein 4-manifold admits a multisection-with-divides decomposition (Cor.~\ref{Cor: existence_multisections_Weinstein}).

\subsection{Acknowledgements}
This project began at the 2023 Trisectors' Workshop held at UC Davis.
We thank Marion Campisi, James Hughes, and Daniela Cortes Rodriguez for initial collaboration, and the organizers for an enriching and productive workshop. 
We also thank Inanc Baykur, John Etnyre, Kyle Hayden, Gabriel Islambouli, and Laura Starkston for helpful correspondence. 
Part of this work was completed at PCMI 2026. PC was partially supported by NSF DMS-2145384. RA and AR were both partially supported by an AMS-Simons Travel Grant.

\section{Background on contact and symplectic manifolds} \label{sec: background}

Throughout this work, all manifolds will be smooth and orientable, unless otherwise stated.  We use $(W,\omega)$ to denote a Weinstein domain with symplectic structure $\omega$ (see Sec.~\ref{weinstein.sec}), $(M,\xi)$ to denote a closed 3-manifold with contact structure $\xi$.  We assume familiarity with contact structures, and Legendrian and transverse knots, and refer the reader to \cite{etnyre2003introductory} for an overview of these topics.

\subsection{Weinstein Domains}\label{weinstein.sec}

A \emph{Weinstein domain} is a real 4-dimensional Liouville domain $(W,\lambda)$, together with a Morse function $\rho:W\rightarrow \mathbb{R}$ such that the Liouville vector field for $\lambda$ is a gradient-like vector field with respect to $\rho$.  As a result, Weinstein domains have a handle structure compatible with the symplectic structure $\omega$, where $\omega=d\lambda$. We will often abuse notation and refer to a Weinstein domain using the symplectic form as $(W,\omega)$ instead of $(W,\lambda)$, and also suppress the Morse function $\rho$ from the notation.

The simplest example of a Weinstein domain, in four dimensions, is the standard 4-ball $B^4$ with $\lambda_{\text{st}} = x_1dy_1 + x_2dy_2$, and $\rho_{\text{st}} = x_1^2+y_1^2+x_2^2+y_2^2$. The next simplest example is a 4-dimensional 1-handlebody $\natural^k S^1\times B^3$ with $\rho$ having one and $k$ critical points of index $0$ and $1$,  respectively. 
Such a domain is the unique minimal symplectic filling of the standard contact structure on $\#^k S^1\times S^2$ \cite{mcduff_rational-ruled}. It is known that 1-handlebodies 
are also unique as minimal weak fillings, and deformation equivalent to minimal strong fillings and Stein fillings~\cite{Wendl_strongly}.

By work of Gompf \cite{gompf1998handlebody},
the handlebody structure on a Weinstein domain can be encoded by a \emph{Kirby--Weinstein diagram} $(\Lambda,k)$, which is a Legendrian link $\Lambda$ in the 3-manifold $\#^k S^1\times S^2$ endowed with its standard contact structure $\xi_{std}$. The corresponding Weinstein domain is obtained by attaching 2-handles to $\natural^k S^1\times D^3$ along the components $\Lambda_i$ of $\Lambda$ with framing equal to $tb(\Lambda_i)-1$. We will use the front projection on $\#^k S^1\times S^2$ to draw Weinstein diagrams; see Figure~\ref{fig:bandedinkwdiagram_1} for an example.

\begin{definition}\label{def:unlink}
    A transverse link $L \subset (M, \xi)$ is called a \emph{transverse unlink with maximal self-linking}, or a \emph{\textmaxsl unlink}, if there is a transverse isotopy taking it to a standard unlink\footnote{A standard unlink in $(\mathbb{R}^3,\xi_{std})$ is one whose components each have self-linking number -1.} in a Darboux neighborhood inside $M$.
\end{definition}

The following lemma shows that the notion of a transverse unlink with maximal self-linking is well-defined.

\begin{lemma}\label{lem:unique_unlink}
    If $(M,\xi)$ is connected, any two transverse unlinks with maximal self-linking are transverse isotopic iff they have the same number of components.
\end{lemma}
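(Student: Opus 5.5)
The forward direction is immediate: transverse isotopy preserves the number of components, so I only need the converse. Let $L$ and $L'$ be \textmaxsl{} unlinks in $(M,\xi)$ with $n$ components each. By Definition~\ref{def:unlink}, $L$ is transverse isotopic to a standard unlink $U$ in some Darboux ball $B\subset M$, and $L'$ to a standard unlink $U'$ in some Darboux ball $B'$. Since transverse isotopy is an equivalence relation, it suffices to show that $U$ and $U'$ are transverse isotopic.

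The first step is to move both unlinks into a single Darboux ball. I will use the fact that any two Darboux balls in a connected contact manifold are contact isotopic. Concretely, choose a smaller ball around a point of $B$ and one around a point of $B'$. Connect the two centers by a path, and use transitivity of the contactomorphism group on points, together with the local uniqueness of Darboux charts up to contact isotopy, which follows from Gray stability applied on a small ball. This yields a contact isotopy $\psi_t$ of $M$ with $\psi_1(B)\supset B'$, after first shrinking $B'$. Alternatively, since $U'$ is standard, I can first shrink it radially inside $B'$ using the contact dilation $(x,y,z)\mapsto(sx,sy,s^2z)$, which preserves $\xi_{std}$ and hence transversality. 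Because a contact isotopy carries transverse links to transverse links, $\psi_1(U)$ is transverse isotopic to $U$. This reduces the problem to two standard unlinks lying in a common Darboux ball, that is, in $(\mathbb{R}^3,\xi_{std})$.

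The second step handles the standard unlinks inside $(\mathbb{R}^3,\xi_{std})$. A standard unlink, as I read footnote to Definition~\ref{def:unlink}, consists of $n$ components, each a transverse unknot with $sl=-1$, and these bound disjoint disks. I will shrink each component and translate them so that they sit in disjoint small balls arranged along the $x$-axis. Both operations, the contact dilation and horizontal translation, are contact isotopies of $\mathbb{R}^3$, and I will cut them off to have compact support using contact Hamiltonians. Inside each small ball, the component is a transverse unknot with $sl=-1$. By Eliashberg's classification of transverse unknots, which are transversely simple and determined by $sl$, each such component is transverse isotopic within that ball to a fixed model, say the braid closure of the trivial 1-braid. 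Doing this for both $U$ and $U'$ puts them in the same normal form.

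The main obstacle is the split-position claim: showing that a standard unlink can be transversely isotoped so that its components lie in disjoint balls, and that the resulting ordering of components does not matter. The ordering is harmless, because the balls can be permuted by compactly supported contact isotopies, which is possible since the complement of finitely many small balls is connected. If the definition of a standard unlink does not already give split position, I will instead argue via the transverse Markov theorem or via Legendrian approximations. Each component is the transverse pushoff of a $tb=-1$ Legendrian unknot, and since a Legendrian unlink of max-$tb$ unknots is Legendrian simple (Eliashberg–Fraser, extended to unlinks), the Legendrian unlinks are Legendrian isotopic. Taking transverse pushoffs then gives the required transverse isotopy.
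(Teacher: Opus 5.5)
Your argument is essentially the paper's: use connectedness to contact-isotope the two Darboux balls onto one another, then appeal to transverse simplicity of unlinks in the standard tight structure, which the paper simply cites to Eliashberg--Fraser. As you suspected, your shrink-and-translate step does not prove split position: a cut-off dilation or translation can sweep one component through the others. Likewise, choosing Legendrian approximations with $tb=-1$ on every component already presupposes splitting, so that step genuinely rests on the cited classification of unlinks, exactly as in the paper.
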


\begin{proof}
    Let $L$, $L'$ be two such unlinks with the same number of components. There exists Darboux balls $B$, $B'$ respectively such that $L$ and $L'$ are transverse isotopic to standard unlinks in them. As $(M, \xi)$ is connected, after an ambient isotopy, we can assume that $B$ and $B'$ are the same. Then, by the transverse simplicity of unlinks in the standard $S^3$ \cite{eliashberg_fraser}, we are done.
\end{proof}

A \textmaxsl transverse unlink bounds a collection of symplectic disks in any symplectization neighborhood of $(M, \xi)$. 
In particular, if $(M, \xi)$ is fillable, it bounds disks in any filling. Proposition~\ref{uniquedisks.prop} shows that such disk fillings are unique inside 1-handlebodies provided that the unlinks have maximal self-linking numbers.

\begin{proposition}\label{uniquedisks.prop}
Let $W=\natural^k S^1\times D^3$ be a Weinstein 1-handlebody, and let $L$ be a transverse \textmaxsl unlink in $\partial W$. Up to symplectic isotopy fixing $L$, the unlink $L$ bounds a collection of symplectic disks in $W$ unique up to symplectic isotopy. 
\end{proposition}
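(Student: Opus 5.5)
We reduce to the case of the standard 4-ball and then appeal to uniqueness of symplectic disk fillings of the standard unlink there.

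\textbf{Step 1 (move the unlink into a Darboux ball).} Since $L$ is a \textmaxsl unlink, Definition~\ref{def:unlink} gives a transverse isotopy of $L$ into a standard unlink $L_0$ inside a Darboux ball $B\subset \partial W = \#^k S^1\times S^2$. By Lemma~\ref{lem:unique_unlink}, the choice of $B$ does not matter. Any transverse isotopy in $\partial W$ can be realized by a symplectic isotopy of disks: we add a collar $\partial W\times[0,\epsilon]$ in the symplectization, on which the trace of a transverse isotopy is a symplectic cylinder after a small tilting in the Liouville direction. So it suffices to prove uniqueness of fillings for $L_0$, carrying the isotopy along.

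\textbf{Step 2 (push the disks away from the 1-handles).} Let $D$ be any collection of symplectic disks bounded by $L_0$. We want to isotope $D$, through symplectic surfaces and rel boundary, into a standard Weinstein ball $B^4$ neighborhood of the index-0 critical point, sitting below $B$. The tool is the Liouville flow. Its backward flow contracts $W$ onto the skeleton, which is a wedge of $k$ circles (the cores of the 1-handles together with the 0-handle point). Pulling back by the flow rescales $\omega$ conformally, so it preserves symplecticity of surfaces, and boundary conditions can be fixed by cutting off in a collar.

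The obstacle is that $D$ may intersect the 1-handle cocores. In dimension 4 the skeleton is 1-dimensional, so a generic 2-dimensional $D$ misses it. However, $D$ cannot simply be flowed into the 0-handle, because it may wrap around the 1-handles. I would handle this in one of two ways.
\begin{itemize}
\item Use that $H_1$ of the 1-handles is detected by the boundary: since $D$ consists of disks bounded by a nullhomotopic unlink inside $B$, each disk's intersection with each cocore 3-ball has algebraic count zero.
\item More robustly, use the pseudoholomorphic-curve approach of Hind and Wendl. After filling $\partial W$ by capping off with $k$ copies of $S^2\times D^2$ surgeries, $W$ embeds in a blow-up of $S^2\times S^2$ or $\mathbb{CP}^2$ in which the disks cap to spheres of a class realized by holomorphic curves. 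Uniqueness then follows from automatic transversality and positivity of intersections, as in the Eliashberg/Hind classification of fillings.
\end{itemize}
I expect this step to be the main difficulty, and I would try the holomorphic route first. Concretely, $\natural^k S^1\times D^3$ is the complement of a neighborhood of $k$ disjoint Lagrangian-type handles in $B^4$, or it is Wendl's planar-fibration filling. Wendl's theorem \cite{Wendl_strongly} identifies $W$ with the total space of a Lefschetz fibration with planar fibers and no singular fibers, namely $(\text{disk with } k \text{ holes})\times D^2$. With that model, I would aim to make each symplectic disk a section-like object via a foliation by $J$-holomorphic planar fibers.

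\textbf{Step 3 (uniqueness in the ball).} Once every disk collection lies in the 0-handle $B^4$ with boundary the standard unlink, we need that symplectic disk fillings of the standard transverse unlink in $(B^4,\omega_{std})$ are unique up to symplectic isotopy. This is the relative version of Eliashberg's and Gromov's results. It follows from Boileau--Orevkov and Hind's uniqueness of symplectic disks bounding the max-$sl$ unknot: cap off in $\mathbb{CP}^2$ so that the disks become lines, which are unique up to symplectic isotopy by Gromov. Disjoint disks are handled component-by-component using positivity of intersections in the pencil of lines through a point. Combining this with Steps 1 and 2 yields the proposition.
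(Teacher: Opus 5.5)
There is a genuine gap in Step~2, which is where you expected the difficulty. Once the disks are known to lie in the 0-handle, the proposition reduces to the $B^4$ case. So the claim that an arbitrary symplectic disk system bounded by $L_0$ can be moved into the 0-handle, rel boundary and through symplectic surfaces, carries essentially all of the content, and neither of your two routes proves it.

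The first route does not work. A 2-disk and the 3-dimensional cocore of a 1-handle meet generically in a 1-manifold (closed curves), not in points, so there is no algebraic intersection number to cancel. Moreover, in dimension~4 the vanishing of a homological quantity does not give an isotopy removing intersections, since there is no Whitney trick. It certainly does not give an isotopy through symplectic surfaces. Symplecticity, i.e.\ pseudoholomorphic input, has to enter at exactly this step.

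The second route names the right kind of tools but contains no argument. To make it work you would have to:
\begin{itemize}
\item choose $J$ making the given disks holomorphic while still having Wendl's foliation of (the completion of) $W\cong F\times D^2$ by planar $J$-curves;
\item control the boundary/asymptotic behaviour so that positivity of intersections forces each disk to meet each fiber exactly once;
\item promote the resulting section system to a symplectic isotopy rel $L$. The system is unique up to homotopy rel boundary because $\pi_2$ of the configuration space of a planar surface with boundary vanishes, but a homotopy is not yet a symplectic isotopy.
\end{itemize}
If you carried this out, it would prove the proposition directly, and both the detour into the 0-handle and your Step~3 would be unnecessary.

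For comparison, the paper never moves the disks into the 0-handle. It cites Eliashberg's uniqueness of the symplectic disk bounded by the max-$sl$ unknot in $B^4$, and asserts that the same argument applies to transverse unlinks in the boundary of any minimal strong filling, hence directly in $\natural^k S^1\times D^3$. It combines this with smooth uniqueness of trivial disk systems (Kamada, Meier--Zupan). So the holomorphic-curve input sits precisely where your argument lacks it.

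Your Step~3 also needs more care than you give it. Gromov's theorem gives an isotopy of the capped spheres through $J_t$-lines in $\mathbb{CP}^2$. However, those lines need not agree with your fixed caps outside the ball, nor stay transverse to its boundary. What you obtain is therefore an isotopy whose boundary moves by a transverse isotopy, and making it fix $L$ requires a further argument.
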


\begin{proof}

From \cite[Thm.~1.1.$\text{A}'$]{Eliashberg95},
any transverse unknot in $\partial B^4$ with the standard contact structure is uniquely filled by a symplectic disk (unique up to symplectic isotopy rel boundary). 
Now consider $L$ as above. Since each component $U_i$ of $L$ has maximal self-linking number, there are symplectic disks $D_i\subset V$ with $\partial D_i=L$.  The disks $D_i$ are unique up to smooth isotopy by \cite{kamada2002braid,meier2018bridge}, and up to symplectic isotopy by the argument in \cite{Eliashberg95}, which can be applied to transverse unlinks in the boundary of any minimal strong filling. 
As Weinstein 1-handlebodies are minimal strong fillings, the result follows.  
\end{proof}

\begin{remark}\label{rem:disk_isotopy}
    By \cite[Thm.~2.9]{etnyre_knots}, the transverse isotopy of Lemma~\ref{lem:unique_unlink}  extends to a symplectic isotopy of the disks bounded by the link. 
\end{remark}

\subsection{Open Books and Contact Heegaard Splittings}\label{section:contact heegaard}

Let $M$ be a fixed orientable closed 3-manifold. Contact structures in $M$ can be codified using topological information such as open book decompositions and contact Heegaard splittings.
We can translate between these two descriptions, via algorithms that we describe below.

An \emph{open book decomposition} of $M$ is a pair $(\Gamma,\pi)$ where $\Gamma$ is an oriented link in $M$ and $\pi:M\setminus \Gamma \rightarrow S^1$ is a fibration of the complement of $\Gamma$ such that $\pi^{-1}(t)$ is the interior of a compact surface $F_t$ with boundary equal to $\Gamma$. 
The surfaces $F_t$ are called \emph{pages} and $\Gamma$ is the \emph{binding} of the open book. We say an open book supports the contact structure $\xi$ if up to an isotopy through contact structures, $\xi = \ker \alpha$ for some one-form, $d\alpha$ is positive on the pages $F_t$, and $\alpha$ is positive on the binding $\Gamma$. Every open book supports a contact structure $\xi_\pi$ unique up to isotopy on $M$ \cite{thurston_winkelnskemper_obd} that restricts to a universally tight contact structure on $M \setminus \Gamma$ \cite{etnyre_vv_torsion}.

Conversely, by the Giroux correspondence any contact structure on $M$ has a supporting open book decomposition, and any two open books for the same contact structure are related by moves called stabilization or destabilization.

A surface $\Sigma\subset (M,\xi)$ is \emph{convex} if there is a contact vector field $v$ for $\xi$ that is defined in a neighborhood of $\Sigma$ and is transverse to $\Sigma$. 
The \emph{dividing set} for a convex surface $\Sigma$ is the collection of closed curves 
$\Gamma = \{x\in\Sigma|v_x \subset \xi\}$. 
An example of a convex surface is the union of two pages of an open book $F_0\cup \overline{F}_{1/2}$;
in this case, the dividing set is equal to the binding $\Gamma=\partial F_t$ (see e.g.\ \cite{etnyre_vv_torsion}).  

A Legendrian graph $L\subset (M,\xi)$ has a neighborhood unique up to contactomorphism by Theorem 2.5.8 of \cite{geiges2008introduction}. A \emph{contact handlebody} $(H,\xi)$ is a standard contact neighborhood of a Legendrian graph. A \emph{contact Heegaard splitting} of $(M,\xi)$ is a decomposition $M=H_1\cup_\Sigma\overline{ H}_2$ such that 
\begin{itemize}
    \item $\Sigma := H_1 \cap \overline{H}_2$ 
    is a convex surface with dividing set $\Gamma$, and 
    \item each $H_i$ is a contact handlebody in $(M,\xi)$. 
\end{itemize} 
Contact Heegaard splittings were introduced by Giroux in~\cite{giroux2003g} and further developed by Torisu in~\cite{torisu2000convex}. It is a fact that $H_1$ and $H_2$ are two ``halves" of an open book decomposition $(\Gamma, \pi)$ supporting $(M, \xi)$. The surface $\Sigma$ is equal to the union of two pages $F_{0}\cup \overline{F}_{1/2}$ glued along their boundaries $\Gamma$. 

\begin{definition}
Let $(H,\xi)$ be a contact handlebody, and let $\Sigma = \partial H$ be its convex boundary. 
A \emph{half-open book}  supporting $\xi$ is a pair $(\Gamma, \pi)$ where 
    \begin{itemize}
        \item $\Gamma$ is the dividing set on $\Sigma$, and
        \item $\pi: H \backslash \Gamma \to [0,\frac{1}{2}]$ is a fibration where every $\pi\inv(t)$ is the interior of a compact surface $F_t$ with $\partial F_t = \Gamma$.  These are the \emph{pages} of the half-open book.
    \end{itemize}
In particular,  $\Sigma = F_0 \cup_\Gamma \overline{F}_{1/2}$.
\end{definition}

When referring to half of a standard open book decomposition $(\Gamma,\pi)$ of $(M,\xi)$ with $M=H_1\cup \overline{H}_2$ a contact Heegaard splitting, we use $\pi$ to denote both the projection map for the full open book and the corresponding half-open books for $(H_i,\xi)$. We then have two fibrations, $\pi:H_1\setminus\Gamma\rightarrow[0,1/2]$, and $\pi:H_2\setminus\Gamma\rightarrow[1/2,1]$, where $[1/2,1]$ replaces $[0,1/2]$ in the definition of half-open book in the natural way.

A collection of arcs in a compact surface that cut it into disks is called an \emph{arc system}. A collection of simple closed curves in a closed surface that cut it into a connected planar surface is called a \emph{cut system}. Given a half-open book for a contact handlebody $(H,\xi)$, an arc system for the page $F_0$ induces a cut system for $\partial H$. Such cut systems have the property that each curve intersects the dividing set in two points, and so they remember the contact structure in $(H,\xi)$. This is exploited below to encode a contact Heegaard splitting diagrammatically using curves in the Heegaard surface $\Sigma$. 

\begin{definition}
    A \emph{contact Heegaard diagram} (or Heegaard diagram with divides) is a tuple $(\Sigma, \alpha, \beta, \Gamma)$ such that $\alpha=\{\alpha_i\}$ and $\beta=\{\beta_i\}$ are cut systems for $\Sigma$, and $\Gamma$ is a multicurve cutting $\Sigma$ into two homeomorphic surfaces with boundary and such that the intersection of the $\alpha_i$ ( resp.\ $\beta_i)$ with each half of $\Sigma\setminus \Gamma$ is an arc system.
\end{definition} 

Consider a Weinstein cobordism $(W, \lambda)$ between $(M_-,\xi_-)$ and $(M_+, \xi_+)$, with Legendrian links $\Lambda_\pm \subset (M_\pm,\xi_\pm)$. 
\begin{definition}\label{compatible_W_cobord.def}
    We call open books $(\Gamma_\pm, \pi_\pm)$ supporting $(M_\pm, \xi_\pm)$ compatible with the Weinstein cobordism $((M_-,\xi_-), \Lambda_-)\to((M_+, \xi_+), \Lambda_+)$ if the following hold:
    \begin{enumerate}
        \item The open books $(\Gamma_\pm, \pi_\pm)$ support the contact structures $(M_\pm,\xi_\pm)$.
        \item The cobordism $(W,\omega)$ is obtained by attaching Weinstein 2-handles along $\Lambda_-$.  We call these curves $\Lambda_-$ the {\it vanishing cycles.}
        \item The link $\Lambda_-$ is included in the core of one of the contact handlebodies corresponding to the contact Heegaard splitting corresponding to $(\Gamma_-, \pi_-)$.
        \item The resulting contact Heegaard splitting of $(M_+,\xi_+)$, obtained by Legendrian surgery on the above Heegaard splitting of $(M_-,\xi_-)$, can be stabilized to ensure $\Lambda_+$ is included in the core of one of the handlebodies.
    \end{enumerate}
\end{definition}

It follows from an algorithm of Avdek \cite[Thm.~1.9]{avdek_surgery_obd}, and also the proof of the main theorem by Islambouli--Starkston \cite[Thm.~3.1]{IS-divides}, that given any Weinstein cobordism $W: ((M_-,\xi_-), \Lambda_-)\to((M_+, \xi_+), \Lambda_+)$, and Legendrian links $\Lambda_\pm$ as above,  there exist compatible open books $(\Gamma_\pm, \pi_\pm)$ supporting $(M_\pm, \xi_\pm)$.  

\subsection{Braids, pointed open books, and shadows} \label{subsec: braids} 
We next review relevant diagrammatic tools for representing transverse links in contact manifolds. 

In the sequel, we use the term ``braid'' to refer to both braids and their closures, as follows.
A (closed) {\it braid} in an open book decomposition  $(\Gamma, \pi)$ of $M$ is a link in $M$ that is everywhere transverse to the pages $F_t = \pi\inv(t)$.
Analogously, a \emph{braid} in a half-open book $(\Gamma, \pi)$ for a handlebody $H$ is a properly embedded collection of $n \in \N$ arcs $b_i: [0,1/2] \to H$ for $i \in \{1, \ldots, n\}$, everywhere transverse to the pages $F_t$, with each $b_i(0) \in int(F_0)$ and $b_i(1/2) \in int(F_{1/2})$.

\begin{theorem}[\cite{pavelescu2012braiding}]\label{thm:pavelescu_braids}
Any transverse link in a contact $3$-manifold $(M, \xi)$ with compatible open book $(\Gamma, \pi)$ can be transversely isotoped to a braid in $(\Gamma,\pi)$.
\end{theorem}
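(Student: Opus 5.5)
The plan follows the Bennequin--Pavelescu strategy: first move the transverse link into a position that is almost everywhere positively transverse to the pages, then remove the remaining bad regions by local transverse isotopies. Let $L$ be a transverse link in $(M,\xi)$ and let $(\Gamma,\pi)$ be an open book supporting $\xi$. By the definition of support, after an isotopy of $\xi$ we may write $\xi=\ker\alpha$ with $d\alpha>0$ on the pages and $\alpha>0$ on $\Gamma$. Then $\alpha$ restricted to each page is a Liouville form, and near the binding we have the standard model $\alpha = dz + r^2\,d\theta$ in coordinates $(z,r,\theta)$ on $\Gamma\times D^2$, where $\theta$ is the page coordinate. Using the contact Heegaard splitting $M=H_1\cup_\Sigma \overline{H}_2$ into two half-open books, the problem reduces to making $L$ braided in each half and arranging that it meets the convex surface $\Sigma=F_0\cup\overline{F}_{1/2}$ transversely, away from the dividing set $\Gamma$.

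The steps are as follows. First, perturb $L$ by a $C^0$-small transverse isotopy so that it is disjoint from $\Gamma$, and so that $d\pi|_L$ vanishes only at finitely many nondegenerate points. Second, approximate $L$ by a Legendrian polygon. By the Legendrian approximation theorem \cite{etnyre2003introductory}, $L$ is a transverse push-off of a Legendrian link $\Lambda$, and $\Lambda$ can be $C^0$-approximated by concatenations of short Legendrian arcs. Each such arc lies either in the characteristic foliation of a page or in a flowline direction that is near-transverse to the pages. Third, replace each arc along which $\pi$ decreases, that is, each negatively oriented segment, by an arc that winds positively around the binding. Concretely, push the arc into the neighborhood $\Gamma\times D^2$ and replace it with a path that goes around the binding in the positive $\theta$ direction. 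In the model form $dz+r^2d\theta$, such a path is positively transverse to $\xi$ once $r$ is large relative to the $z$-variation. The replacement is a transverse isotopy because it is supported in a solid torus in which the old arc and the new arc together cobound a disk whose characteristic foliation allows a transverse isotopy; this is exactly Bennequin's trick in the case of $S^3$. Fourth, verify positivity: after these replacements, $d\pi(\dot L)>0$ everywhere, so $L$ is a closed braid in $(\Gamma,\pi)$.

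The main obstacle is the third step in a general open book, where the page is not a disk and the Reeb and Liouville geometry away from the binding is not explicit. Dragging an arc to the binding requires a path within a page from the arc to $\Gamma$ along which the transverse isotopy can be performed. I would take this path along the Liouville flow of $\alpha|_{F_t}$: since this flow expands toward $\partial F_t=\Gamma$, flowing an arc outward keeps it transverse, because $\alpha$ increases along it. This reduces the problem to the local binding model. There, one checks directly that a segment $\{\theta\text{ decreasing}\}$ is transversely isotopic, rel endpoints, to a segment that goes the long way around with $\theta$ increasing. Finally, a genericity argument ensures that the isotopies for different bad arcs have disjoint supports, so they do not introduce new intersections with the link.
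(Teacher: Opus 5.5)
The paper gives no proof of this statement; it quotes Pavelescu, so your sketch has to stand on its own. The overall strategy is the right one: push the arcs where $\pi$ decreases to the binding, then flip them across $\Gamma$. But the step that carries all the difficulty is not established, so there is a genuine gap. Let $\gamma$ be a bad arc and $\Psi_s$ the page-wise Liouville flow. The trace of your isotopy is the embedded sheet $\{\Psi_s(\gamma(u))\}$, ruled by flow lines, followed by the flip region near $\Gamma$. The rest of $L$ can cross this sheet, for example a braided strand passing between $\gamma$ and $\Gamma$ in the same range of pages. Such transverse crossings are stable, and the sheet is rigidly determined by $\gamma$ and the Liouville field, so no small perturbation removes them. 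Your move is then not an isotopy of $L$. Your closing genericity argument does not help. Genericity makes these intersections isolated, not empty, and disjointness of the supports for different bad arcs says nothing about good strands lying in the way. A related failure: the forward Liouville flow does not carry every point to $\Gamma$. Points of the skeleton $\mathrm{Skel}(F_t)$ of the page never leave it, and $\gamma$ generically crosses the 2-dimensional set $\bigcup_t \mathrm{Skel}(F_t)$. So part of $\gamma$ never reaches your binding model.

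What is missing is an argument that routes each bad arc to $\Gamma$ through a region free of the rest of $L$ and of the skeleta (or first moves the obstructing strands aside), together with a transversality check along that route. Off the characteristic foliation this check is not free. If $\alpha=K\,dt+\beta$ with $\beta$ independent of $t$, then $\Psi_s^*\alpha=K\,dt+e^s\beta$. Your claim holds there because $\beta(\dot\gamma)>0$ on a bad arc, and $\beta(X)=0$ for the Liouville field $X$, which also handles the cut-off near the endpoints. A finger pushed along a path $P$ with $\beta(\dot P)\neq 0$, however, has one side with $\beta<0$, which must be compensated by tilting it forward in $t$. Where $\beta_t$ genuinely depends on $t$ (the region interpolating the monodromy), the page-wise flow picks up a $dt$-term of uncontrolled sign. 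This is where the Legendrian and characteristic-foliation structure would have to be used; as written, your Step 2 plays no role later.

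Two smaller points:
\begin{itemize}
\item The form $\alpha=dz+r^2\,d\theta$ has $d\alpha|_{\{\theta=\mathrm{const}\}}=0$, so it is not adapted in your sense near $\Gamma$. Use $h_1(r)\,dz+h_2(r)\,d\theta$, which also gives each page a Liouville field pointing toward $\Gamma$.
\item The replacement arc near $\Gamma$ is transverse at every radius, because $\dot z>0$ on a bad arc, so ``$r$ large'' is beside the point. What needs checking is the passage of the arc through $\Gamma$: the old and new arcs together form a meridian of $\Gamma$. This works precisely because $\xi=\ker dz$ along $\Gamma$ and $\dot z>0$.
\end{itemize}
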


Therefore when $(M, \xi)$ is equipped with a contact Heegaard splitting $M=H_1\cup_{\Sigma_g}H_2$, any transverse link $L$ in $(M, \xi)$ can be transversely isotoped into \emph{transverse bridge position}, meaning that each $\tau_i=L\cap H_i$ is a trivial tangle (and more specifically, a braid) in the half-open book structure supporting $(H_i,\xi)$ for $i=1,2$. 
Although the concepts of `closed braids' and `links in transverse bridge position' are synonyms for a fixed contact 3-manifold, we use them separately to emphasize the underlying decomposition of $(M,\xi)$: open books for braids, and contact Heegaard splittings for transverse bridge splittings. These perspectives provide us with two combinatorial descriptions of transverse links called \emph{pointed open books} and \emph{shadow diagrams}, which we describe below.

Each open book decomposition of $M$ corresponds to an abstract open book $(F,\phi)$, where $F$ is an oriented surface with boundary and $\phi:F\rightarrow F$, the monodromy of the open book, is a self-diffeomorphism of $F$ which is the identity near $\partial F$.  A braid in an abstract open book $(F,\phi)$ can be described by a \emph{pointed open book}, which is a triple $(F,P,\hat\phi)$ such that $P$ is a finite set of points in $F$, and $\hat\phi$ is a diffeomorphism of pairs $\hat\phi:(F,P)\rightarrow (F,P)$, fixing  $P$ setwise.  In addition, $\hat\phi$ is called the \emph{pointed monodromy of the pointed open book}.  To ensure the pointed open book describes a braid in a given abstract open book $(F,\phi)$, we require that $\hat\phi$ is isotopic to $\phi$ via an isotopy fixing $F$ near $\partial F$.

\begin{notation}
The monodromy $\phi$ of a pointed open book can be decomposed into elementary pieces of the following two types (see Figure~\ref{fig:twist} for conventions used in \cite{hayden21} and in this work): 
\begin{itemize}
    \item \emph{Dehn twist} $D_\gamma$ along a closed curve $\gamma$ that avoids the marked points
    \item \emph{half twist} $H_\alpha$ along an arc $\alpha$ such that $\partial \alpha$ consists of two distinct marked points.
\end{itemize}
Furthermore, a \emph{point-push} (or a \emph{push map} in \cite{hayden21}) along a closed arc $\delta$ with both endpoints on a marked point $p$ (and otherwise avoiding marked points) is a compound monodromy $D_{\gamma^+}\circ D^{-1}_{\gamma^-}$ where $\gamma$ is as in Figure~\ref{fig:twist}.
\end{notation}

\begin{figure}[h]
    \centering
    \includegraphics[width=0.8\linewidth]{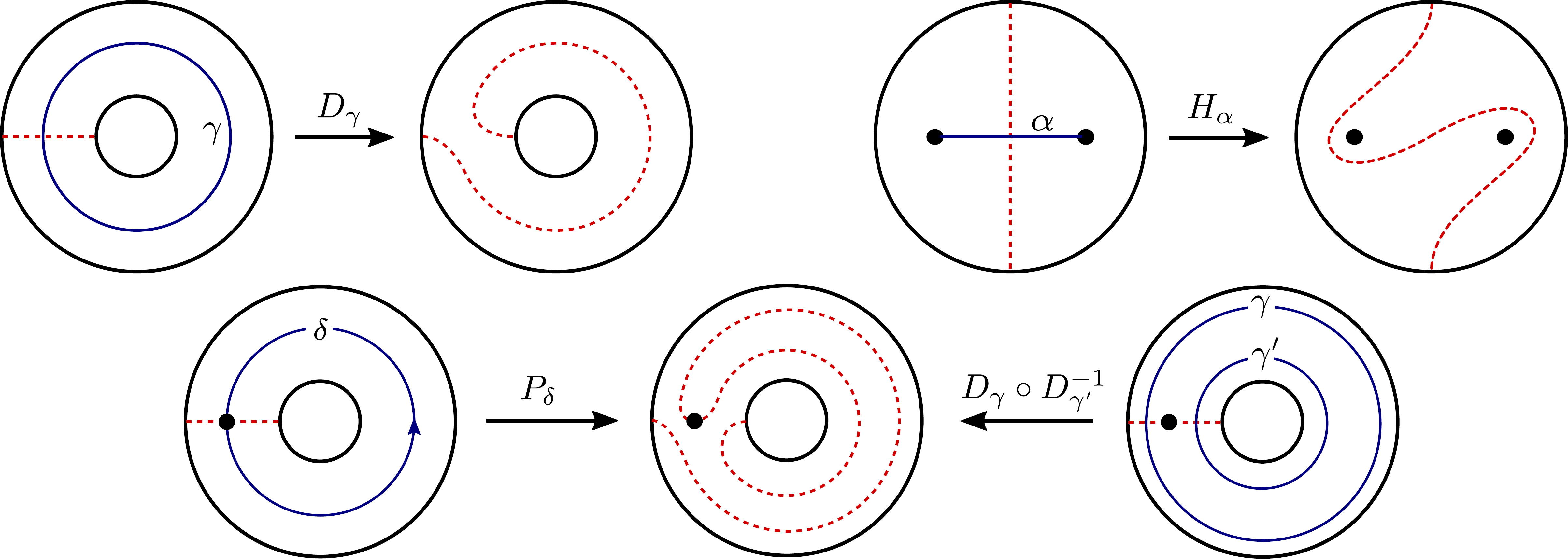}
    \caption{(left) Dehn twist, (right) half-twist, and (bottom) point-push maps. We thank Kyle Hayden for allowing us to import this figure from \cite{hayden21}.}
    \label{fig:twist}
\end{figure}

A \emph{quasipositive braid} is a braid whose pointed monodromy can be written as a composition of positive half-twists and arbitrary Dehn twists.
We are particularly interested in the subclass of \emph{PA-quasipositive braids}\footnote{`PA' is short for `positive allowable'.}, which we introduce next.  
These bound properly embedded symplectic surfaces in a compatible Stein filling of the contact 3-manifold \cite[Thm.~1.2]{hayden21}.
A key observation is that a single point-push along a vanishing cycle in a compatible open book (Def.~\ref{compatible_W_cobord.def}) is null-homologous and bounds a symplectic disk in the cobordism.  The self-linking number can be used to obstruct symplectic fillability for higher-order point-pushes.  For example, two point-pushes in the annulus open book for tight $S^3$ is an unknot with self-linking $-2$ (see e.g.\ \cite[Sec.~2.6.4]{etnyre_knots}), and hence does not bound a symplectic disk in the 4-ball.

\begin{definition}\label{def:qp} Let $(M, \xi)$ be a contact 3-manifold which is the boundary of a Weinstein domain $(W,\omega)$, and a Legendrian link $\Lambda \subset (M, \xi)$. Consider an open book $(\Gamma, \pi)$ compatible with the Weinstein cobordism from $((\#^k S^1 \times S^2,\xi_{st}),\Lambda_0)$ to $((M, \xi),\Lambda)$.  A braid in the open book $(\Gamma,\pi)$ is \emph{PA-quasipositive} if it can be encoded by an abstract pointed open book whose pointed monodromy is isotopic to a product of positive half-twists about arbitrary arcs, positive Dehn twists about arbitrary curves, and at most one point-push about each vanishing cycle (as in Definition \ref{compatible_W_cobord.def}) that lies on a page of $(\Gamma,\pi)$. 
Such a pointed monodromy is called a {\it PA-quasipositive pointed mapping class.}
\end{definition}

Our notion of PA-quasipositive is different from the notion of a quasipositive braid in an arbitrary open book in \cite[Def.~2.8]{hayden21}, which allows arbitrary Dehn twists and does not explicitly mention point-pushing.  However, a PA-quasipositive braid is a quasipositive braid in a positive allowable open book as defined in the paragraph after \cite[Def 2.12]{hayden21}, as each of the above point-pushing maps can be re-written as a product of a positive and a negative Dehn twist; because each point-push occurs along a vanishing cycle, the negative Dehn twist can be canceled out by a positive Dehn twist in the monodromy factorization of $(\Gamma,\pi)$. Each such point-push corresponds to the birth of a symplectic disk in  $(\#^k S^1\times S^2)\times I$. Note we do not allow multiple point-pushes about a single vanishing cycle; in general a braid corresponding to a multiple point-push does not have maximal self-linking number  and hence cannot bound a symplectic surface. 
 
A trivial tangle in a handlebody $H$ can be represented by a collection of disjoint paths called \emph{shadows} in $\partial H$, namely the images of the tangle arcs under an isotopy into $\partial H$.  In particular, a braid in a half-open book with pages $F_t$ is a trivial tangle, and can be represented by a set of shadows on $\Sigma=F_0\cup_\Gamma \overline{F}_{1/2}$ such that each shadow intersects the dividing set $\Gamma$ once.  
A transverse link in $(M,\xi)$ transverse bridge position with respect to an open book supporting $\xi$ is then represented by a pair of shadows on $\Sigma=F_0\cup_\Gamma \overline{F}_{1/2}$.  Such a pair corresponds to a quasipositive (resp.\ PA-quasipositive) link whenever the first set of shadows maps to the second by a quasipositive (resp.\ PA-quasipositive) mapping class.

\section{Symplectic surfaces in Weinstein domains}

We are now ready to incorporate symplectic surfaces into a Weinstein domain. We work with the subclass of properly embedded symplectic surfaces which are ascending and have only positive critical points (Def.~\ref{ascending.def}). Proposition~\ref{prop: construct_pos_asc} in this section shows that positive ascending surfaces are diagrammatically described by transverse banded unlinks in a Kirby--Weinstein diagram $(\Lambda,k)$ (see Sec.~\ref{weinstein.sec}) for $(W,\omega)$ with boundary $(\partial W, \xi)$. 
Transverse banded unlink diagrams are introduced in Section~\ref{sec:transverse_banded_unlinks} and are the input for all of the examples in this work.

\begin{definition}\label{ascending.def}
Let $(W, \lambda)$ be a Weinstein domain with a compatible Morse function $\rho:W\rightarrow \R$.
A smoothly embedded oriented surface $(S, \partial S) \subset (W, \partial W)$ is \emph{ascending} if 
\begin{itemize}
    \item $S$ contains no critical points of $\rho$,
    \item $\rho|_S$ is a Morse function, and
    \item for each regular value $c$, $\rho|_S^{-1} (c)$ is a positively transverse link in $\rho^{-1}(c)$. 
\end{itemize}

\end{definition}

To define a positive ascending surface, we recall some background from \cite[Sec 4.1]{hayden21}. Let $J$ be an almost complex structure compatible with $(W,\omega)$, and let $S$ be a properly embedded ascending surface in $W$. 
The critical points of $S$ are complex with respect to $J$, i.e., given a critical point $p$, $T_pS$ is a complex subspace of $T_pW$. Further, a critical point is said to be \emph{positive} or \emph{negative} depending on whether the intrinsic orientation on $T_pS$ agrees or disagrees, respectively, with the complex orientation. 
 
\begin{definition}[\cite{hayden21}]\label{pos_ascending.def}
A \emph{positive ascending surface} is an ascending surface with only positive critical points. 
\end{definition}

The work of Hayden in \cite{hayden21} 
shows that a positive ascending surface is isotopic, through compactly supported isotopy of ascending surfaces, to a symplectic surface. Theorem 4.2 of \cite{hayden21} shows that positive ascending surfaces in Stein domains have boundary transversely isotopic to a quasipositive braid. 
Thus, morally, positive ascending  surfaces are those described by movies of quasipositive braids. We further discuss this idea in Section~\ref{surface_categories.sec}.

\subsection{Transverse banded unlinks}\label{sec:transverse_banded_unlinks}

Let $L$ be a transverse link in $(M, \xi)$. A \emph{(3-dimensional, transverse) band} $v$ is an embedding of a square $I\times I$ into $M$, meeting the link $L \subset \{0\} \times M$ at the two arcs $v(\{0,1\}\times I)$, such that, on the interiors of its arcs, $\partial v$ is embedded and transverse to $\xi$, and such that the derivatives of the arcs $v( I\times \{0,1\})$ agree with those of $L$ at their endpoints. The resolved link $L[v]$ is defined to be the surgery of $L$ along $v$, i.e.\ $$ L[v] = L \setminus v(\{0,1\}\times I) \cup v(I \times \{0,1\}).$$
With $L$ as above, and $\nu$ a collection of 3-dimensional transverse bands, we define $L[\nu]$ as the surgery of $L$ along all $v\in \nu$. Note that this definition implies both $L$ and $L[\nu]$ are transverse links, but there are no additional geometric constraints on the bands $\nu$. The middle panel of Figure~\ref{fig:fig_qp_trefoil} depicts examples of 3-dimensional transverse bands in $\R^3$ with the rotationally symmetric contact structure.

A \emph{(4-dimensional, symplectic) band} $v$ is a symplectic embedding of a square $I\times I$ into the piece of the symplectization $[0,1] \times M$, meeting the link $L \subset \{0\} \times M$ at the two arcs $v(\{0,1\}\times I)$, such that the projection to $M$ of $\partial v$ is embedded and transverse to $\xi$ as in the definition of a 3-dimensional transverse band. The resolved link $L[v]$ is defined to be the projection to $M$ of the surgery of $L$ along $v$.  Similarly, for a collection $\nu$ of symplectic 4-dimensional bands $v$, $L[\nu]$ is the projection to $M$ of the surgery of $L$ along each $v\in \nu$.  
Note that not all 3-dimensional transverse bands arise as projections of 4-dimensional symplectic bands, but \emph{positive} 3-dimensional transverse bands do.  
 
\begin{definition}\label{def:band}
We say that a (three-dimensional, transverse) band $v$ is a {\em positive band} if in a front projection of a Darboux neighborhood, $L$ and $L[v]$ are related by the addition of a positive crossing as in Figure \ref{fig:positive}. 
\end{definition}

\begin{figure}[h]
    \centering
    \includegraphics[width=0.5\linewidth]{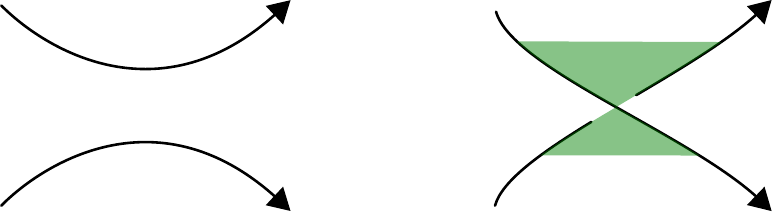}
    \caption{Local model for the addition of a \emph{positive transverse band}. The band surgery occurs in the front projection inside a Darboux ball.}
    \label{fig:positive}
\end{figure}

\begin{remark} \label{bandsurface.rem}
From the proof of \cite[Lem.~2.8]{etnyregolla} it follows that when $v$ is a positive band for $L$ in $(M,\xi)$, the transverse links $L \subset \{0\} \times M$ and $L[v] \subset \{1\} \times M$ cobound a symplectic surface $S_v \subset [0,1] \times M$ satisfying the following properties:
\begin{enumerate}
    \item the projection $\pi_{\R}:[0,1] \times M \to [0,1]$ restricted to $S_v$ is a Morse function,
    \item the function $\pi_{\R}|_{S_v}$ has a single critical value of index one,
    \item the surface $S_v$ is nonsingular with respect to $\pi_{\R}|_{S_v}$ as in \cite[Def.~3.3]{hayden21},
    \item the only singularity of the characteristic  foliation on the projection of $S_v$ to $M$ is a positive hyperbolic point, and
    \item the characteristic foliation on the projection of $S_v$ to $M$ is weakly gradient-like for some Morse function on $S$ as in \cite[Sec.\ 1]{hayden21}.
\end{enumerate}
\end{remark}

The properties in Remark~\ref{bandsurface.rem} are technical and will be needed to use machinery from \cite{hayden21} later in this work. It follows from \ref{item:bb2} that for transverse links with positive transverse bands, the surface $S_\nu$ is a collection of annuli and $|\nu|$ pairs of pants.

A \emph{transverse banded link} in $(M,\xi)$ is a banded link $(L,\nu)$ such that $L$ is transverse to $\xi$ and each $v\in \nu$ is a positive band. A transverse isotopy of a transverse banded link $(L,\nu)$ in $(M,\xi)$ is an isotopy of the banded link $(L,\nu)$ through transverse banded links.
Transverse isotopies of a transverse banded link $(L,\nu)$ extend to symplectic isotopies of the surface $S_\nu$. One can prove this using Theorem 4.3, Theorem 1.3, and Remark 3.6 in \cite{hayden21}.

\begin{definition}
A \emph{transverse banded unlink in a Kirby--Weinstein diagram} is a pair $(U,\nu)\subset (\Lambda,k)$ where $U \subset (\#^k S^1 \times S^2, \xi_{st})$ is a \textmaxsl transverse unlink (Def.~\ref{def:unlink}) and $\nu$ is a collection of positive bands (Def.~\ref{def:band}) disjoint from $\Lambda$.

\end{definition}

An example of a transverse banded unlink in the cotangent bundle of $S^2$ is given in Figure~\ref{fig:bandedinkwdiagram_1}.  Note that the link, together with the attaching sphere for the 2-handle, are both drawn in the front projection, with the link shown as its transverse front projection, and the attaching sphere as its Legendrian front projection.

The following proposition gives an algorithm to construct a symplectic surface from a transverse banded unlink $(U,\nu)$ in a Kirby--Weinstein diagram.  We call this surface the {\it realizing symplectic surface} for $(U,\nu)$ and denote it by $S(U,\nu)$.

\begin{proposition}\label{realizing-symplectic-surface.prop}
There is a realizing symplectic surface $S(U,\nu)$ properly embedded in $(W,\omega)$, constructed from the transverse banded link $(U,\nu)\subset (\Lambda, k)$ via the steps below.
\begin{enumerate}
    \item Consider the pair $(B^4, D)$ of a collection of symplectic 2-disks $D$ bounded by the \textmaxsl unlink with $|U|$ connected components in $\partial B^4$; such disks are provided by Proposition~\ref{uniquedisks.prop}.
    \item Attach $k$ Weinstein 1-handles to $B^4$ away from the boundary unlink of to get the pair \mbox{$(\natural^k S^1 \times B^3,   D)$.}  Identify $\partial D$ with $U$ via a transverse isotopy.
    \item Concatenate the surfaces $S_\nu$ in $(\#^k S^1\times S^2)\times [0,1]$ corresponding to the bands $\nu$ to obtain the pair \mbox{$(\natural^k S^1 \times B^3, s(U,\nu))$.}      
    \item Attach Weinstein 2-handles to $\#^k S^1\times S^2$ along the components $\Lambda_i$ of $\Lambda$ away from the boundary $U[\nu]$ of $s(U,\nu)$. Extend the surface into the cobordism by attaching $(\{t\} \times U[\nu])$ to $s(U,\nu)$. This gives the pair $( W, S(U,\nu))$.  
\end{enumerate}

\end{proposition}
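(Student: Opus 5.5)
The plan is to verify, step by step, that the four-stage construction yields a properly embedded surface that is symplectic at every stage. The pieces must glue smoothly across the level sets $\{0\}\times M$ and $\{1\}\times M$ of the various symplectization collars. I would work throughout with a fixed Weinstein Morse function $\rho$. I would view $W$ as $B^4$ with Weinstein 1-handles attached, followed by a collar $(\#^k S^1\times S^2)\times[0,1]$ that we identify with a piece of the symplectization, followed by the Weinstein 2-handles along $\Lambda$. Each stage is a region between two regular level sets of $\rho$, and the Liouville flow identifies a neighborhood of each regular level set with a piece of the symplectization.

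\textbf{Stages (1) and (2).} By Proposition~\ref{uniquedisks.prop}, the \textmaxsl unlink with $|U|$ components bounds symplectic disks $D$ in $B^4$. We may assume $D$ is supported in a small Darboux ball, or at least meets a collar of $\partial B^4$ in a cylinder $[1-\epsilon,1]\times \partial D$ in symplectization coordinates. The 1-handles can then be attached in a neighborhood of attaching spheres disjoint from $\partial D$. The level set $\partial(\natural^k S^1\times B^3)$ is the standard $(\#^kS^1\times S^2,\xi_{st})$, and $\partial D$ becomes a \textmaxsl unlink there. By Lemma~\ref{lem:unique_unlink}, this unlink is transverse isotopic to $U$. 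By Remark~\ref{rem:disk_isotopy}, that transverse isotopy is realized as a symplectic isotopy of the disks. Concretely, I would realize it as a symplectic cylinder traced out in a symplectization collar, which can be absorbed into the collar.

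\textbf{Stage (3).} By Remark~\ref{bandsurface.rem}, each positive band $v$ yields a symplectic cobordism $S_v\subset[0,1]\times M$ from $L$ to $L[v]$. Since the bands in $\nu$ are disjoint, I would either perform all the surgeries in a single collar, or stack them in successive collars $[i,i+1]\times M$, joining them by trivial cylinders $[i,i+1]\times L'$. These cylinders are symplectic because $L'$ is transverse and the symplectization form is $d(e^t\alpha)$. Each $S_v$ is a product cylinder near its ends, so the pieces concatenate smoothly into $s(U,\nu)$.

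\textbf{Stage (4).} The bands are disjoint from $\Lambda$, so $U[\nu]$ can be isotoped (transversely, by a small push) off a standard neighborhood of $\Lambda$. The Weinstein 2-handles are then attached in a neighborhood of the Legendrian attaching circles. The Liouville flow outside the handles still gives a symplectization structure on the complement, and in it the cylinder $\{t\}\times U[\nu]$ is symplectic. Hence $S(U,\nu)$ is properly embedded, with boundary $U[\nu]\subset\partial W$.

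The main obstacle is the gluing in Stage (2). There we need the symplectic disks and their boundary isotopy to sit compatibly with the collar structure near the 1-handle attaching region. The remaining concatenations reduce to the observation that every piece is cylindrical near its boundary in symplectization coordinates.
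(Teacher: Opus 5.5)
Your proposal is correct and takes the same route as the paper. You build the surface stage by stage, check that each piece is symplectic and avoids the handle attaching regions, and concatenate the pieces.

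The only difference is in the gluing. You arrange each piece to be a product cylinder near each gluing level, whereas the paper smooths the concatenation and appeals to the symplectic condition being open. The paper also notes, via Proposition~\ref{uniquedisks.prop}, that the result is well defined up to Weinstein homotopy, but the statement itself does not require this.
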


\begin{proof}  
Each cobordism described at each step is symplectic and embedded by construction, and due to the hypothesis that the surfaces are disjoint from the attaching regions of the handles of $W$.
Since being symplectic is an open condition, it follows that $S(U,\nu)$ is a symplectic surface as it is built via the smoothed concatenation of smaller symplectic pieces. The surface thus constructed is well-defined (up to ambient Weinstein homotopy) by Lemma~\ref{uniquedisks.prop}.
\end{proof}

\subsection{Positive ascending surfaces admit transverse banded unlink diagrams}\label{sec:ascending_implies_SUv}

Let $(W,\omega)$ be a Weinstein 4-manifold, and assume that its Morse function $\rho:W \rightarrow \R$ has the property that all its index-2 critical points occur after all its index-1 critical points.
The following proposition states that ascending surfaces can be modified, via an isotopy through positive ascending symplectic surfaces, so that the 
index-1 critical points of $S$ occur between the index-1 and index-2 critical points of $W$; see also Remark \ref{rem: ascending_isotopy}. We are grateful to Laura Starkston for helpful suggestions regarding the following proposition.

\begin{proposition}\label{prop: weinstein_homotopy}
Let $(W,\omega)$ be a Weinstein 4-manifold with an embedded positive ascending surface $(S,\partial S)\subset (W,\partial W)$. 
There is a Weinstein homotopy equivalence of pairs $(W,\omega, \rho;S)$ to $(W',\omega', \rho';S')$ through positive ascending surfaces, where $(S',\partial S')\subset (W',\partial W')$ is a positive ascending surface satisfying 
\[ 
\rho\left(\Crit_1(\rho')\right)
<
\rho\left(\Crit_0(\rho'|_{S'})\right)<\rho\left(\Crit_1(\rho'|_{S'})\right)<\rho\left(\Crit_2(\rho')\right).
\]
\end{proposition}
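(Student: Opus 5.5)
We reorder critical values by a sequence of Weinstein homotopies, each of which only adjusts the Morse function $\rho$ (and the Liouville field) near chosen critical points. At every stage the surface stays ascending, and its critical points stay complex with unchanged orientation sign, so it stays positive ascending. There are two kinds of critical points to move: the critical points of $W$ (indices $0,1,2$) and the critical points of $\rho|_S$ (indices $0,1$). Since $S$ is ascending, $\rho|_S$ has no index-$2$ critical points. The reason is that each level set $\rho|_S^{-1}(c)$ is a positively transverse link, and a local maximum of $\rho|_S$ would force a transverse unknot component to die by shrinking in an increasing level, which a positive complex tangency does not allow. I would verify this from Hayden's local models in \cite[Sec.~4]{hayden21}.

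\textbf{Step 1: push the critical points of $W$ off $S$ in the correct direction.} The stable manifolds of the index-$1$ and index-$2$ critical points of $W$ have dimension at most $2$, and the unstable manifolds of index-$2$ points have dimension $2$. Since $S$ is a surface in a 4-manifold, a generic small perturbation (a compactly supported Weinstein homotopy together with an ambient isotopy of $S$ through ascending surfaces) makes $S$ disjoint from the \emph{descending} disks of the index-$1$ critical points and from the \emph{ascending} cocores of the index-$2$ critical points. The descending manifolds of index-$1$ points are arcs, so they generically miss a surface. For the index-$2$ cocores, which are surfaces meeting $S$ in isolated points, we instead use the flow-and-slide argument of Step 2.

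\textbf{Step 2: lower the index-$1$ handles of $W$ and raise the index-$2$ handles of $W$.} For the index-$1$ critical points of $W$, we flow $S$ downward along the Liouville field so that it avoids a neighborhood of the descending arcs. We then apply the standard Weinstein homotopy (as in Cieliebak--Eliashberg) that decreases the critical value of an index-$1$ point as long as no gradient trajectory connects it to lower critical points in a forbidden way; this puts all index-$1$ values of $W$ just above the minimum. For the surface, this amounts to reparametrizing the levels near the handle, which preserves transversality of the level links. Symmetrically, each index-$2$ handle of $W$ is attached along a Legendrian in some level. Using the isotopy of Remark~\ref{bandsurface.rem} and the transverse isotopy results (Theorem~\ref{thm:pavelescu_braids} and \cite[Thm.~4.3]{hayden21}), we push $S$ off the attaching Legendrian and the cocore, and then raise the critical values of the index-$2$ points of $W$ above all critical values of $\rho|_S$.

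\textbf{Step 3: put the minima of $S$ below its saddles, inside the 1-handlebody region.} Within the region $\rho^{-1}(a,b)$, which is a symplectization piece of $(\#^k S^1\times S^2,\xi_{st})$, the surface is a movie of transverse links. Minima are births of max-$sl$ unknots, and saddles are positive bands. A birth can be moved below an earlier band by isotoping the small unknot, which lives in a Darboux ball, down through the levels; near the minimum the surface is a symplectic disk that can be shrunk into a standard Darboux model, as in the proof of \cite[Thm.~1.3]{hayden21}. The other critical points do not obstruct this because the unknot can be made disjoint from the bands and the link. We then rescale $\rho'$ on this product region to obtain the stated inequalities.

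\textbf{Main obstacle.} I expect the hardest part to be guaranteeing, in Step 2, that the surface can be made disjoint from the 2-handle cocores. This is needed so that every critical point of $\rho|_S$ can be moved below $\Crit_2(\rho')$. I would first try the approach of flowing $S$ downward under the Liouville flow away from the cocore disks, using the fact that $\partial S$ is transverse and can be isotoped off the belt circles in $\partial W$. An isotopy of the boundary, transverse in the sense of the statement, is permitted.
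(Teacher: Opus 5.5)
Your Step~2 fails for the index-2 points of $W$, and the whole plan rests on that step. Keep the Liouville field $V$ and raise the value of an index-2 point $q$ from $c_q$ to some $c''$ above every critical value of $\rho|_S$. Then $\rho'>c''$ on the entire part of the cocore $C_q$ (the $V$-unstable manifold of $q$) lying in $\rho^{-1}(c_q,c'')$, because every such point lies on a $V$-trajectory leaving $q$.

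Since $S$ and $C_q$ are both 2-dimensional in a 4-manifold, $S$ generically meets that part of $C_q$ in transverse isolated points, and general position cannot remove them. Take such a point $y$ and a modification supported near $C_q$, as in the standard rearrangement. Then $\rho'(y)>c''>\rho(y)$, while $\rho'=\rho$ slightly further out on $S$. So $\rho'|_S$ acquires an interior local maximum, which by your own observation a positive ascending surface cannot have.

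Your justification that this ``amounts to reparametrizing the levels'' is therefore not valid. The ascending condition is a condition on $d\rho\wedge\lambda$ restricted to $TS$, so it depends on which hypersurfaces are level sets, not only on $V$. It survives only where $\rho'=\rho$. Letting $V$ change too just turns this into the unsolved problem of isotoping $S$, through positive ascending surfaces, off that part of the cocore.

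You flag this as the main obstacle, but the remedy you suggest cannot work. $C_q$ is invariant under the Liouville flow $\phi_t$, so $\phi_{-t}(S)\cap C_q=\phi_{-t}(S\cap C_q)$. Flowing $S$ downward therefore only slides the bad intersection points toward $q$, keeping them in the bad window. Moving $\partial S$ off the belt circle is irrelevant, since $\partial S$ generically misses it already and the problematic intersections are interior.

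The missing idea, which is the paper's route, is to move the finitely many critical points of $\rho|_S$ instead of the handles of $W$:
\begin{enumerate}
\item Make a small transverse perturbation of the level links through $\Crit(\rho|_S)$, realized as a Weinstein homotopy via Lemma~12.5 of \cite{Cieliebak-Eliashberg-stein-weinstein-book}. Afterwards the $V$-trajectories through these critical points are pairwise disjoint and miss $\Crit_1(\rho)\cup\Crit_2(\rho)$. This is a genuinely generic condition (finitely many trajectories versus finitely many points), unlike disjointness from 2-dimensional cocores.
\item Flow the surface critical points lying above $\Crit_2$ down along these trajectories, and those below $\Crit_1$ up, into the window between the index-1 and index-2 levels.
\item Order births below bands inside the 1-handlebody, using the push-in of Bennequin surfaces from Example~4.8 of \cite{hayden21}. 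This is essentially your Step~3.
\end{enumerate}
Your index-1 step is fine in substance, for a reason other than the one you give. Lowering an index-1 value with $V$ fixed can be supported near its 1-dimensional core and a small ball around the critical point, which $S$ generically misses, so $\rho'=\rho$ near $S$. The index-2 half, however, needs the trajectory argument rather than raising the handles.
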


\begin{proof}

Let $C_i$ and $D_i$ be the index-$i$ critical points of $\rho$ and $\rho|_S$, respectively. 
In the smooth case, this result would be proven in three steps. First, we would take a small perturbation of $S$ to ensure that flow lines passing through $D_0\cup D_1$ are pairwise disjoint and avoid the points in $C_1\cup C_2$. Then, by integrating the flow of $\rho$ we would get a smooth isotopy of $S$ to an embedding $S'$ with the desired properties: the points in $D_0\cup D_1$ above $C_2$ flow downwards and the ones below $C_1$ upwards. The third step is to flow the points in $D_0$ to lie between $D_1$ and $C_1$. This last ambient isotopy is supported on a neighborhood of the flow lines passing through $D_0$.  

For our setup, we need to check that these steps can be done respecting the Weinstein structure of $W$. Suppose the Liouville vector field compatible with the $(W, \omega, \rho)$ is $V$. The first step can be achieved by a small transverse isotopy of the links $S\cap \rho^{-1}(t)$ in regular levels $t \in \rho(D_0)\cup \rho(D_1)$. By the definition of ascending surface, we can ensure that $\rho(D_i) \neq \rho(C_j)$ for any $i$ and $j$. This ensures that $V$ is non-vanishing on $\rho^{-1}(\rho(D_i))$. Then, by Lemma 12.5 of \cite{Cieliebak-Eliashberg-stein-weinstein-book}, there is an isotopy of the Liouville flow $V$ that achieves the trace of our contact isotopy between two regular level sets. Call the new Weinstein structure $(W',\omega',\rho')$, and the new Liouville vector field $V'$. This is a small isotopy, hence it preserves the property of the surface being ascending.

Now, for the second step, we can integrate the backwards flow of $V'$ to flow the points in $D_i$ to $D_i'$, to ensure that $\rho(C_1) < \rho (D_i') < \rho(C_2)$, for $i = 0,1$. 
This induces a scaling of the ambient symplectic form and induces a Weinstein homotopy equivalence of pairs as required.

For the third step, observe that it suffices to consider positive ascending surfaces in $\natural^k S^1 \times B^3$, which are the push-in of immersed Bennequin surfaces in $\#^k S^1 \times S^2$, as in Example 4.8 of \cite{hayden21}. 
Similarly as in that example, we can modify the Morse function $f$ on all the disks to ensure that the bands occur at a higher level {than the births} in the symplectization --- this is intuitively the same as carving out ``deeper disks'' corresponding to each strand of the boundary braid, and then attaching the bands corresponding to half-twists in the braid.
\end{proof}

\begin{remark}\label{rem: ascending_isotopy}
    In the above, we can also ensure to isotope the positive ascending surface so that
    \[\rho\left(\Crit_0(\rho'|_{S'})\right)<\rho\left(\Crit_1(\rho')\right)<\rho\left(\Crit_1(\rho'|_{S'})\right)<\rho\left(\Crit_2(\rho')\right).
    \]
    This can be done by carefully choosing the attaching region of the 1-handles, similar to the proof of \cite[Thm.~4.14]{hayden21}.
\end{remark}

\begin{proposition}\label{prop: construct_pos_asc}
    Up to Weinstein homotopy, a pair $(W, S)$ of a positive ascending surface $S$ in a Weinstein domain $W$ is isotopic through a compactly supported isotopy of ascending surfaces to $(W,S(U,\nu))$ for some transverse banded link $(U,\nu)$.
\end{proposition}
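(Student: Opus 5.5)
The plan is to first put $(W,S)$ into the normal form provided by Proposition~\ref{prop: weinstein_homotopy}, together with Remark~\ref{rem: ascending_isotopy}, and then read off a transverse banded unlink level by level. After a Weinstein homotopy through positive ascending surfaces, we may assume
\[
\rho\left(\Crit_0(\rho|_{S})\right)<\rho\left(\Crit_1(\rho)\right)<\rho\left(\Crit_1(\rho|_{S})\right)<\rho\left(\Crit_2(\rho)\right).
\]
There are no index-2 critical points of $\rho|_S$: for a positive ascending surface, a local maximum would give a capped-off transverse unknot component whose self-linking violates positivity. I would justify this by the local model in \cite[Sec.~4]{hayden21}, where positive critical points are only births and saddles in the increasing direction. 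Choose regular values $a<b<c$ so that three conditions hold. First, all minima of $\rho|_S$ and the index-0 point of $\rho$ lie below $a$. Second, the 1-handles of $W$ lie in $\rho^{-1}[a,b]$. Third, all saddles of $\rho|_S$ lie in $\rho^{-1}[b,c]$ and the 2-handles lie above $c$.

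Next I analyze each slab in turn. Below $a$, $S$ is a collection of disks, one per minimum, with only births. Hence $S\cap\rho^{-1}(a)$ is a transverse unlink bounding symplectic disks in $B^4$. Bennequin's bound for disks in a filling forces each component to have $sl=-1$, so it is a \textmaxsl unlink $U$. Pushing $U$ off the attaching spheres of the 1-handles and flowing through $\rho^{-1}[a,b]$, where $S$ has no critical points, identifies $S\cap\rho^{-1}(b)$ with $U\subset\#^kS^1\times S^2$ up to transverse isotopy. By Proposition~\ref{uniquedisks.prop}, the disks there agree up to symplectic isotopy with those built in steps (1)--(2) of Proposition~\ref{realizing-symplectic-surface.prop}.

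The slab $\rho^{-1}[b,c]\cong(\#^kS^1\times S^2)\times[b,c]$ is a piece of symplectization, and $S$ restricted to it is an ascending cobordism with only positive index-1 critical points. At each saddle I would apply Hayden's local normal form for positive critical points \cite[Sec.~4.1, Thm.~4.3]{hayden21}, combined with Remark~\ref{bandsurface.rem}. Each saddle is then realized, up to a compactly supported isotopy of ascending surfaces, by the surface $S_v$ of a positive transverse band $v$ attached to the transverse link just below the critical level. Between critical levels the slices move by transverse isotopy. I would push these isotopies down through the cobordism so that all bands become attached to $U$ itself, and perturb generically so the bands are disjoint from one another and from $\Lambda$. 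This yields a collection $\nu$ of positive bands on $U$, and $S\cap\rho^{-1}[b,c]$ is isotopic through ascending surfaces to $s(U,\nu)$. Above $c$, $S$ has no critical points, so it is the trace $U[\nu]\times I$ through the 2-handles, as in step (4) of Proposition~\ref{realizing-symplectic-surface.prop}. Gluing these pieces gives $S\simeq S(U,\nu)$.

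The main obstacle is the middle step. I need to show that each positive saddle, after transverse isotopy of the level link, is exactly a positive band in the sense of Definition~\ref{def:band}, and that pulling the bands back along the intermediate transverse isotopies keeps them positive transverse bands. I would first rely on \cite[Thm.~1.3, Remark~3.6]{hayden21}, which says that isotopies of nonsingular ascending cobordisms correspond to transverse isotopies of the levels. The band cores can then be transported by the ambient contact isotopy extending the transverse isotopy, and a contactomorphism preserves the local front model of Figure~\ref{fig:positive}.
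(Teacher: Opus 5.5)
Your overall route is the paper's. Its proof just invokes Proposition~\ref{prop: weinstein_homotopy} and Remark~\ref{rem: ascending_isotopy} and leaves the reading-off of $(U,\nu)$ implicit. The reading-off you supply, however, has a genuine gap, and it is not the one you flag: transporting bands by contact isotopies does preserve positivity. The problem is the position of $U\cup\nu$ relative to the attaching link $\Lambda$.

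Identify $\rho^{-1}[b,c]$ with $Y\times[b,c]$, $Y=\#^kS^1\times S^2$, by the Liouville flow; the descending disks of the $2$-handles become $\Lambda\times[b,c]$. These disks are $2$-dimensional, like $S$, so they generically meet $S$ in isolated points that are stable under perturbation. Each such point is a moment where a level isotopy between critical values drags the link \emph{through} $\Lambda$.

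After you pull the bands back to $L_b=S\cap\rho^{-1}(b)$, the link $U[\nu]$ is transversely isotopic to $L_c$ in $Y$, but not necessarily in $Y\setminus\Lambda$. So the claim that $S$ above $c$ is the trace of $U[\nu]$ fails. Note also that having no critical points above $c$ does not by itself make $S$ miss the descending disks there.

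Concretely, take $\nu=\emptyset$ and a single positive minimum whose unknot is transversely isotoped, before reaching the $2$-handle, into a small max-$sl$ meridian of $\Lambda$ (as for the components in Example~\ref{2_compo_unlink_one_band_example}). Pulling back to level $b$ returns $(L_b,\emptyset)$ with $L_b$ split from $\Lambda$. Its realizing surface is a disk in a ball with unknotted boundary. But $\partial S$ is the meridian of $\Lambda$, which generates $H_1(\partial W)\cong\mathbb{Z}/2$ when $W$ is the cotangent disk bundle of $S^2$. For the same reason, ``perturb generically off $\Lambda$'' is not innocuous: the two ways of pushing a band core off $\Lambda$ drag the strands of $U[\nu]$ along that band across $\Lambda$. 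This can change their linking with $\Lambda$, and hence $\partial S$.

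The missing idea is to normalize at the top, not the bottom. Since $S$ avoids a neighborhood of the critical points of $\rho$, choose $c$ so close to the $2$-handle critical values that $S$ misses the descending disks above level $c$. Then $L_c\subset Y\setminus\Lambda$, and $S$ above $c$ is isotopic to the trace of $L_c$. Below $c$ the descending disks are no obstruction to isotopy in $W$, so you may rearrange the movie in $Y\times[b,c]$ ignoring $\Lambda$:
\begin{enumerate}
\item Extend the level isotopies to ambient contact isotopies and conjugate the bands \emph{forward}. All isotopies are then absorbed into the unlink, which becomes $U:=\Psi(L_b)$ for the composite isotopy $\Psi$, and the bands are attached simultaneously with $U[\nu]=L_c$ on the nose.
\item General position alone does not separate $2$-dimensional bands. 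A later band core that pierces an earlier band must first be pushed out through that band's feet, which are no longer in the level link at that time.
\item Since $U$ is transversely isotopic to $L_b$ in $Y$, it is still a max-$sl$ unlink, and its disks in $W_1$ are unchanged up to isotopy (Proposition~\ref{uniquedisks.prop}, Remark~\ref{rem:disk_isotopy}). So moving $U$ (as opposed to $U[\nu]$) across $\Lambda$ is free.
\item Any remaining point where $\Lambda$ pierces a band is pushed out through a foot of that band, keeping $U[\nu]$ fixed.
\end{enumerate}

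Two smaller points. A positive ascending surface need not be symplectic, so a Bennequin bound only gives $sl\le -1$ at level $a$. The max-$sl$ property there should come instead from the births being small standard unknots carried along by transverse isotopy. Also, the absence of maxima is an orientation count, not a self-linking argument: at a positive local maximum the level circle is $-\partial$ of a small complex-oriented disk, hence negatively transverse.
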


\begin{proof}
    The proof follows from Proposition~\ref{prop: weinstein_homotopy} and Remark~\ref{rem: ascending_isotopy}.
\end{proof}


\subsection{On complex, quasipositive, and positive ascending surfaces}\label{surface_categories.sec}

We are now ready to tackle the main question of the paper; recall Question~\ref{qn: symp_bridge}.
We answer in the affirmative for positive ascending surfaces, which are a priori a subclass of symplectic surfaces. 
But in some Weinstein domains, these classes are equivalent. 
To discuss what is known in the literature and expected in folklore about the relationship between positive ascending surfaces and symplectic surfaces, we will first set up a few definitions. Recall that a Weinstein domain $W$ induces an open book on its boundary such that the attaching link of 2-handles lies on the page of the open book.

\begin{definition}\label{def: qp}
    A properly embedded symplectic surface in a symplectic filling is called \emph{quasipositive} if its intersection with a compatible boundary open book is a quasipositive braid.
\end{definition}

\begin{definition}\label{def: complex}
    A surface $S$ in a Weinstein domain $W$ is called \emph{complex}
    if it is a complex submanifold with respect to the Stein structure on $W$. It will be referred to as a \emph{complex curve}.
\end{definition}

The following conjectures are reasonable to make, given current evidence --- they can be regarded as a relative version of the Symplectic Isotopy Conjecture (see  \cite{starkston-symplectic_isotopy, siebert_tian}). 
The following conjecture has been made by experts but we did not find a reference.

\begin{conjecture}\label{conj: symp_complex}
    A symplectic surface in a Weinstein domain is symplectically isotopic to a complex curve.
\end{conjecture}

Some weaker versions, stemming from the above discussion, are as follows.

\begin{conjecture}\label{conj: sympl_pos_asc}
    A symplectic surface in a Weinstein domain is symplectically isotopic to a positive ascending surface, up to ambient Weinstein homotopy.
\end{conjecture}

\begin{conjecture}\label{conj: qp_sfc}
 Any pointed quasipositive factorization of a braid in the compatible boundary open book of a Stein domain is the boundary of a complex curve. 
\end{conjecture}

The following result by Hayden \cite{hayden21}, combined with Theorem~\ref{thm: main} and positive resolutions to the above conjectures, would give a positive answer to Question~\ref{qn: symp_bridge} in full generality.

\begin{theorem}\cite{hayden21}\label{thm: hayden}
The following are true:
    \begin{enumerate}
        \item A complex curve in a Stein domain is positive ascending.
        \item A positive ascending surface in a Stein domain is isotopic through positive ascending surfaces to a symplectic surface.
        \item If $K \subset Y$ is the closure of a quasipositive braid with respect to a positive allowable open book, then $K$ bounds a properly embedded quasipositive symplectic surface in a Stein filling of $Y$.
    \end{enumerate}
\end{theorem}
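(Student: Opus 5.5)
This theorem is cited from \cite{hayden21}, so the plan is to recover each item from Hayden's machinery, as recalled above, rather than build new tools. Each of the three parts reduces to one step in that paper.

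For (1), let $S$ be a complex curve in a Stein domain $(W,J)$. Take the Weinstein structure given by a $J$-convex (strictly plurisubharmonic) exhausting Morse function $\rho$, with Liouville form $\lambda=-d^{\mathbb C}\rho$. After a generic perturbation of $\rho$ through $J$-convex functions, $S$ avoids $\Crit(\rho)$ and $\rho|_S$ is Morse. The restriction of a strictly plurisubharmonic function to a complex curve is strictly subharmonic. Hence $\rho|_S$ has no interior local maxima, so it has only critical points of index $0$ and $1$. At a regular level, $S\cap\rho^{-1}(c)$ is positively transverse to the field of complex tangencies $\xi_c=T\rho^{-1}(c)\cap JT\rho^{-1}(c)$. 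The reason is that $T_pS$ is a complex line not contained in $\xi_c$, and with the complex orientation the induced boundary orientation pairs positively with $d^{\mathbb C}\rho$. At a critical point $p$ of $\rho|_S$, the space $T_pS$ is a complex line whose complex orientation agrees with the orientation of $S$, so every critical point is positive. This shows $S$ is positive ascending.

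For (2), start from a positive ascending $S$. Away from the critical points, isotope each level link to a transverse link and straighten the surface along the Liouville flow, so that each slab is a trace of a transverse isotopy in a symplectization. Such traces are symplectic, because transversality makes $d(e^t\alpha)$ positive on $\partial_t\wedge\dot\gamma$. Near each positive critical point, the surface is tangent to a complex line with matching orientation, so it is $\omega$-positive there. One uses a local model: a birth of a transverse unknot with $sl=-1$ for index $0$, and a positive band as in Remark~\ref{bandsurface.rem} for index $1$. The \textbf{main obstacle} is gluing these local models while keeping the level sets transverse throughout the isotopy. Negative points would force a negatively twisted band, so positivity is exactly what is needed here. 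The gluing is handled by Hayden's weakly-gradient-like characteristic foliation and the nonsingularity criteria.

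For (3), write the quasipositive braid in a positive allowable open book as a product of positive half-twists $H_{\alpha_i}$ and Dehn twists. Each positive half-twist corresponds to a positive transverse band on a trivial braid of $n$ strands. Pair each negative Dehn twist with a positive Dehn twist about a vanishing cycle in the monodromy factorization, which yields point-pushes. The $n$ trivial strands bound $n$ symplectic disks by Proposition~\ref{uniquedisks.prop}. Attaching the bands and then the Weinstein $2$-handles, as in Proposition~\ref{realizing-symplectic-surface.prop}, produces a symplectic surface whose boundary braid is the given factorization. By construction this surface is quasipositive in the sense of Definition~\ref{def: qp}.
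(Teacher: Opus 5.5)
The paper gives no proof of this statement: it is quoted from \cite{hayden21} and used as a black box, so your sketch has to stand on its own. Part (1) does. Strict subharmonicity of $\rho|_S$ rules out index-$2$ points, and at a critical point $T_pS=\xi_p$ with its complex orientation, which gives positivity. With $\lambda=-d\rho\circ J$ one also checks $\lambda>0$ on the level links.

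The genuine gap is in part (3). Your step ``pair each negative Dehn twist with a positive Dehn twist about a vanishing cycle'' cannot be carried out in general. If the Dehn twist factors are arbitrary, nothing forces a negative twist to be about a curve isotopic to a vanishing cycle, and nothing bounds how many such twists there are. Moreover, a vanishing cycle can absorb at most one point-push: multiple point-pushes about one cycle in general give a braid without maximal self-linking, which bounds no symplectic surface, as the paper notes around Definition~\ref{def:qp}. For the positive-allowable notion of quasipositivity that (3) refers to, the Dehn twist factors are positive twists $D_{\hat\gamma_i}$ about lifts $\hat\gamma_i\subset F\setminus P$ of the vanishing cycles. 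So there is nothing to pair; what must be shown is that \emph{every} choice of lifts is realized by a symplectic surface.

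Your construction never addresses this. Taking the trivial braid, then bands, then $2$-handles as in Proposition~\ref{realizing-symplectic-surface.prop} gives a braid whose pointed monodromy is the half-twists composed with twists about wherever the attaching circles happen to sit relative to the strands. The point-pushes you created are simply dropped. You need two further steps:
(i) use $D_{\hat\gamma}H_\alpha D_{\hat\gamma}^{-1}=H_{D_{\hat\gamma}(\alpha)}$ to move all half-twists before the Dehn twists, so that bands precede handles;
(ii) realize each prescribed $\hat\gamma_i$ as a Legendrian curve on a page of the open book on $\partial(F\times D^2)$, in the complement of the braid and with contact framing equal to page framing, before attaching its handle.
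A single point-push about $\gamma_i$ is exactly a change of lift, since $D_{\gamma^-}\circ(D_{\gamma^+}\circ D_{\gamma^-}^{-1})=D_{\gamma^+}$. Without (ii), the claim that the boundary braid is the given factorization is unsupported.

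For part (2) you defer to Hayden, as the paper does, but the justification you give for the slabs is wrong as stated. Positivity of $d(e^t\alpha)$ on $\partial_t\wedge\dot\gamma$ only shows that a product $[a,b]\times L$ is symplectic. For the trace of a moving link, the tangent plane is spanned by $\partial_t+X$, with $X$ the velocity of the isotopy, and the link tangent $V$. Then $d(e^t\alpha)(\partial_t+X,V)=e^t\bigl(\alpha(V)+d\alpha(X,V)\bigr)$, which can be negative unless the isotopy is slow relative to the length of the collar it is spread over. In a compact Weinstein domain the collars between critical levels have fixed length. So an additional device is needed, for example a Weinstein homotopy inserting long cylindrical collars. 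This is on top of the gluing at the critical points, which you leave entirely to \cite{hayden21}.
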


The notion of positive ascending surfaces also generalizes the notion of \emph{positively braided surfaces} in the literature. By the work of Boileau--Orevkov \cite{boileau-orevkov} and Rudolph \cite{rudolph1983algebraic}, progress has been made on Conjectures~\ref{conj: symp_complex} and \ref{conj: qp_sfc} for the standard 4-ball: they are true up to diffeomorphisms. The results of \cite{boileau-orevkov} establish Conjecture~\ref{conj: sympl_pos_asc} in the 4-ball, and in upcoming work \cite{BRW:PANLF} this conjecture is established for Weinstein domains admitting a planar nearly-Lefschetz fibration. Some of these results are also claimed in work in progress of Baykur--Etnyre--Hayden--Hedden--Kawamuro--Van Horn-Morris. 

\begin{remark}\label{rem: braid_isotopies}
    It follows from Theorem~\ref{thm: hayden} above that positive ascending surfaces 
    can be encoded by a quasipositive factorization of the boundary braid, i.e., writing the braid as products of conjugates of the positive generators $\sigma_i \in B_n$:
    \[
    \beta = \Pi_{i=1}^k w_k \sigma_{i_k}w_k^{-1}
     \]
    There are braid moves that preserve the isotopy type, e.g.\ by applying braid relations antisymetrically to $w_k$ and $w_k^{-1}$. The two main nontrivial moves are as follows:
    \begin{enumerate}
        \item Global conjugation $\alpha_1 \dots \alpha_l \sim w \alpha_1 w^{-1} \dots \iff w \alpha_l w^{-1}$
        \item Hurwitz moves $\alpha_1 \dots \alpha_k\alpha_{k+1} \dots \alpha_l \iff \alpha_1 \dots (\alpha_k\alpha_{k+1} \alpha_k^{-1}) \alpha_k \dots \alpha_l$
    \end{enumerate}
    Also, braid stabilization preserves the isotopy type of the positive ascending surface by adding a $\sigma_n$ factor to a factorization in $B_n$, to yield a factorization of the stabilized boundary braid in $B_{n+1}$. Inequivalent factorizations can yield distinct surfaces; see e.g.\ \cite{auroux}.
\end{remark}

\begin{figure}[h]
    \centering
    \includegraphics[width=0.4\linewidth]{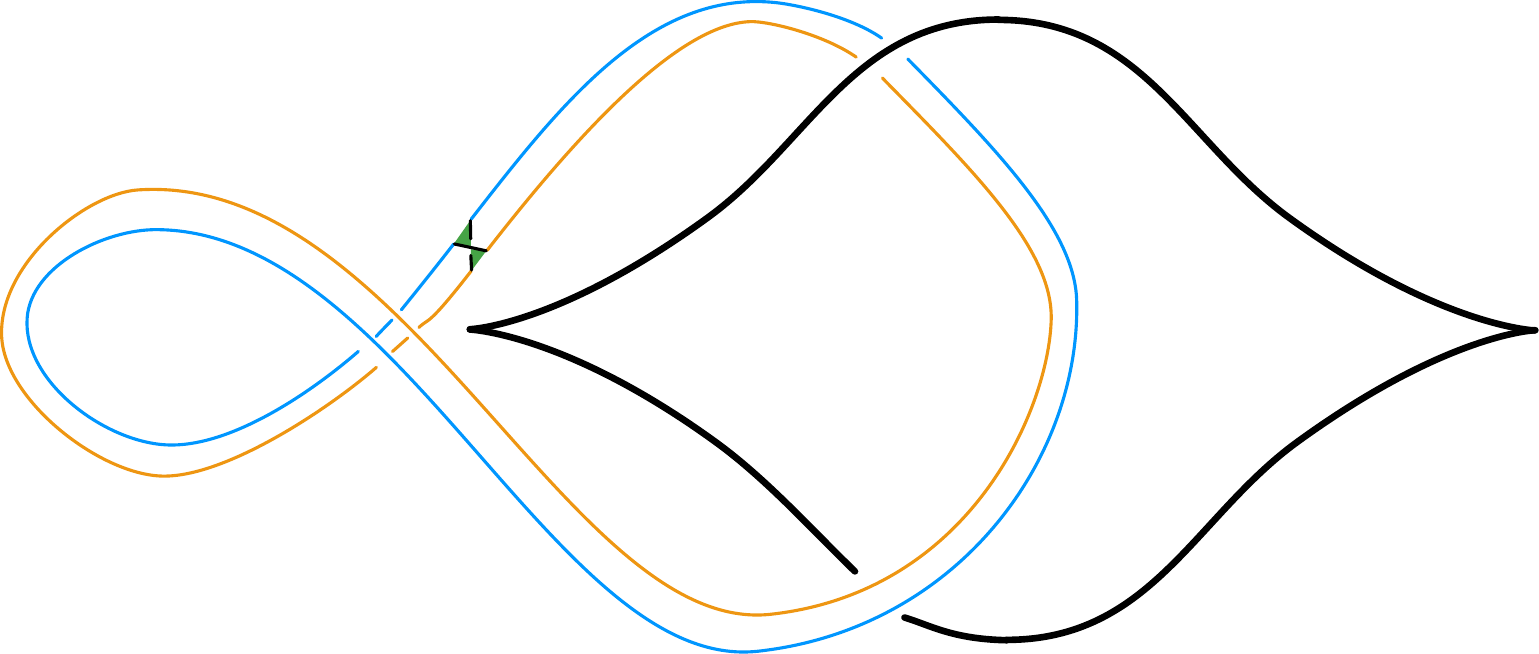}
    \caption{A transverse banded unlink in the Kirby--Weinstein diagram for the cotangent disk bundle of $S^2$.}
    \label{fig:bandedinkwdiagram_1}
\end{figure}

\section{Bisections and multisections with divides for ascending surfaces}

\subsection{Brief overview of the smooth case}
  Trisections of closed 4-manifolds, a decomposition of a 4-manifold into three 4-dimensional 1-handlebodies analogous to Heegaard splittings in dimension 3, were first introduced by Gay and Kirby \cite{gay2016trisecting}.  Meier and Zupan introduced {\it bridge trisections}, an analogous decomposition of knotted surfaces in $S^4$, and later for surfaces in trisected 4-manifolds, in \cite{meier2017bridge,meier2018bridge}. 
  Multisections, or decompositions into possibly more than 3 pieces, of closed 4-manifolds and of surfaces were introduced and studied in \cite{Islambouli-Naylor}.  All of these constructions have the advantage that 4-manifolds and knotted surfaces in them can be represented diagrammatically, by curve-and-arc systems on surfaces.  Multisections allow for greater flexibility in these diagrammatic representations, by potentially allowing one to decrease the genus of the surface required.  
  
\renewcommand{\arraystretch}{1.2}
\begin{table}[h]
  \begin{tabular}{|l|l|}
  \hline
 &Manifold version \\ 
  \hline
  Geometric& Multisection with divides for $(W,\omega)$\\ 
  \hline
  Mapping class & Open book $(F,\phi)$ supporting $(\partial W, \xi)$, admitting factorization \\
  &$\phi=\phi_{n-1,n}\circ\cdots\circ\phi_{2,3}\circ\phi_{1,2}$ such that\\
  &$\phi_{i,i+1}$ induces tight structure on $H_i\cup \overline{H}_{i+1}=\#^{k_i} S^1\times S^2$ \\
  \hline
  Diagrammatic & Multisection diagram with divides\\
  \hline
  \end{tabular}
  \vspace{3mm}
  \caption{Geometric, algebraic, and combinatorial descriptions of decompositions of Weinstein domains \cite{IS-divides}.}
  \label{decompositions.tab} 
\end{table}

\subsection{Multisections with divides for Weinstein domains }
The analogue of multisections for Weinstein domains with boundary was introduced in~\cite{IS-divides}.

\begin{definition}[\cite{IS-divides}, Def.~1.1]\label{multisec.def}
Let $(W, \omega)$ be a symplectic filling for its contact boundary $(M, \xi)$. A \emph{multisection with divides} for $(W,\omega)$ is a decomposition $W = W_1 \cup \cdots \cup W_n$ where
\begin{itemize}
    \item $W_i \cong \natural_{k_i} S^1 \times D^3$
    \item $H_{i+1} := W_i \cap W_{i+1} \cong \natural_g S^1 \times D^2$ for $i = 1, \ldots, n-1$
    \item $\Sigma := W_1 \cap \cdots \cap W_n = \partial H_i$ for all $i$
    \item each $(W_i, \omega|_{W_i})$ is a symplectic filling of $(\partial W_i, \xi_i)\cong (\#^{k_i} S^1 \times S^2,\xi_{std})$
    \item $H_i \cup \overline{H}_{i+1}$ is a contact Heegaard splitting of $(\partial W_i, \xi_i)$, corresponding to the open book $(F,\phi_{i,i+1})$
    \item $H_1 \cup \overline{H}_{n+1}$ is a contact Heegaard splitting of $(\partial W, \xi)$, corresponding to the open book $(F,\phi)$, where $\phi=\phi_{n-1,n}\circ \cdots \circ \phi_{1,2}$
    \item the contact structure on each $H_i$ induces the same dividing set on $\Sigma$
\end{itemize}
\end{definition}

\subsubsection{Multisection diagrams with divides}

As in the smooth case, multisections of Weinstein domains allow one to represent such manifolds diagrammatically via a collection of cut systems on a surface, called a {\it multisection diagram with divides} \cite{IS-divides}.  These diagrams can be viewed as a 4-dimensional analogue of contact Heegaard diagrams.

\begin{definition}[\cite{IS-divides}, Def.~3.2]
    A \emph{multisection diagram with divides} is a tuple $(\Sigma, \alpha_1,\dots, \alpha_n, \Gamma)$ such that $\alpha_i=\{\alpha_{i,j}\}$ are cut systems for $\Sigma$, and $\Gamma$ is a multicurve cutting $\Sigma$ into two homeomorphic surfaces with boundary and such that the intersection of the $\alpha_{i,j}$ with each half of $\Sigma\setminus \Gamma$ is an arc system.  In addition, $(\Sigma, \alpha_i,\alpha_{i+1},\Gamma)$ is a contact Heegaard diagram of the tight contact structure on $\#^{k_i}S^1\times S^2$ for all $i\in \{1,n-1\}$.
\end{definition}

A bisection diagram with divides for the cotangent disk bundle to $S^2$ is given in Figure~\ref{fig:bisectiondiagram}.
\begin{figure}[h]
    \centering
    \includegraphics[width=0.45\linewidth]{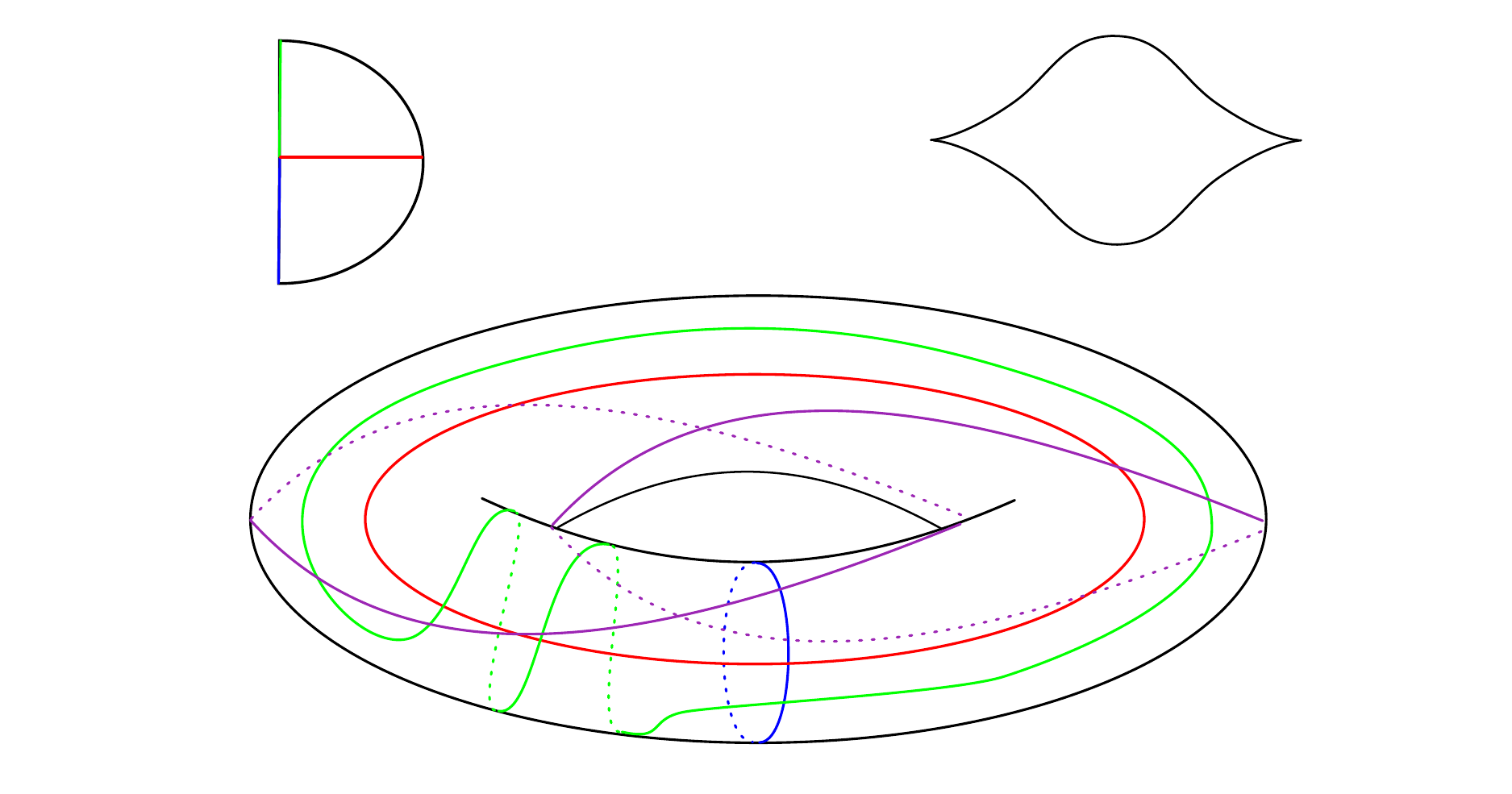}
    \caption{A bisection diagram with divides for the cotangent disk bundle of $S^2$.}
    \label{fig:bisectiondiagram}
\end{figure}

\subsubsection{Bisections with divides for ascending surfaces} We now introduce our main construction, a bridge decomposition of positive ascending surfaces analogous to 
the constructions of Meier--Zupan in the smooth case, namely a bridge multisection with divides.  In the smooth construction, bridge-multisected surfaces are decomposed into a union of trivial disk systems.

\begin{table}[h]
  \begin{tabular}{|l|l|}
  \hline
  &Surface version\\
  \hline
  Geometric&  Multisection with divides for a surface $S\subset (W,\omega)$ (Def.~\ref{bridge_multisection_divides.def})
  \\
  \hline
  Mapping class& Pointed open book $(F,\hat\phi)$ for the closed braid $L=\partial S$ 
  \\ & $\hat\phi=\hat \phi_{n-1,n}\circ \cdots \circ\hat \phi_{1,2}$ such that\\
  &Underlying unpointed monodromies $\hat\phi_{i,i+1}$ induce tight contact structure\\
  &$\hat\phi_{i,i+1}$ corresponds to a \textmaxsl unlink in $\#^{k_i} S^1\times S^2$ (Def.~\ref{def:PMC})\\
  \hline
  Diagrammatic& Shadow diagram with divides (Def.~\ref{shadowdiagram.def}),
  \\ & or braided tri-plane diagram in the case $W=B^4$ (Def.~\ref{def: braided_tri-plane})\\
  \hline
  \end{tabular}
  \vspace{3mm}
  \caption{Geometric, algebraic, and combinatorial descriptions of decompositions of positive, ascending surfaces.}
\label{surfacedecompositions.tab}
\end{table}

\begin{definition} A \emph{trivial symplectic $c$-disk system} is a collection of $c \in \N$ properly embedded symplectic disks $D_i$ in a Weinstein 1-handlebody $V$ as in Proposition~\ref{uniquedisks.prop}, where each $\partial D_i$ is a maximal self-linking unknot in $\partial V$. 
\end{definition}

It follows from Proposition~\ref{uniquedisks.prop} that the $\partial D_i$ are simultaneously smoothly isotopic into $\partial V$, i.e., are smoothly trivial disk systems.

\begin{definition}\label{bridge_multisection_divides.def}
    Let $S$ be a 
    surface properly embedded in a Weinstein domain $W$, where $W=W_1\cup\dots \cup W_n$ is a multisection with divides as in Definition \ref{multisec.def}.  A $(b; c_1,\dots, c_n)$ \emph{bridge multisection with divides} is a decomposition
    $$(W,S)=(W_1,\mathcal{D}_1)\cup \dots \cup(W_n,\mathcal{D}_n)$$ such that for each $1\leq i \leq n$, with indices taken mod $n$, $\mathcal{D}_i \coloneqq S\cap W_i$ is a trivial symplectic $c_i$-disk system, $(H_i,\tau_i)=(W_i,\mathcal{D}_{i})\cap (W_{i+1},\mathcal{D}_{i+1})$ is a $b$-strand braid in the half-open book structure on $H_i$, such that consecutive unions $(H_i,\tau_i)\cup (\bar H_{i+1},\bar \tau_{i+1})$ form a braided \textmaxsl unlink in the corresponding open book, and $(\Sigma,\{p_1,\dots,p_{2b}\})=\bigcap_{i=1}^n (W_i,\mathcal{D}_i)$ is a surface  with $2b$ marked points.  The {\it pointed monodromy of the $i^{\text{th}}$ sector} is the pointed monodromy of the mapping class corresponding to the closed braid $\tau_i \cup\overline{\tau}_{i+1}$ in the open book corresponding to $H_i\cup \overline{H}_{i+1}$. 
\end{definition}
We also say such a surface is in {\it bridge position} with respect to a multisection with divides for $W$.

A {\it bisection} is a multisection with $n=2$ 4-dimensional sectors.
In the smooth case, a multisection of a 4-manifold is determined uniquely by the union of its 3-dimensional handlebodies, called its {spine}. 
In our setting, a spine is defined as follows.
\begin{definition}\label{def: spine_surface_with_divides}
A {\it spine} is a tuple $(H_1,\cdots, H_n;\tau_1,\cdots, \tau_n)$, of contact handlebodies $H_i$ and braids $\tau_i$ in the corresponding half-open book structure on $H_i$, such that 
\begin{itemize}
    \item each consecutive union $H_i\cup \bar H_{i-1}$, $i=2,\dots, n$, is a  connected sum of $S^1\times S^2$'s, and the induced contact structure is its standard tight contact structure;
    \item the induced contact structure on $H_1\cup \bar H_n$ is also tight;
    \item $(H_i,\tau_i)\cup (\bar H_{i-1},\bar \tau_{i-1})$ forms a braided \textmaxsl unlink in a the open book structure on $H_i\cup \bar H_{i-1}$ determined by the half-open book structures on $H_i$ and $H_{i-1}$; 
    \item $(\Sigma,\{p_1,\dots,p_{2b}\})=\cap_{i=1}^n (H_i,\tau_i)$ is a surface  with $2b$ marked points; and
    \item the contact structure on each $H_i$ induces the same dividing set on $\Sigma$.
\end{itemize}
\end{definition}

\begin{proposition}\label{prop: spine_determines_symplectic surface}
    Let $(H_1,\dots, H_n;\tau_1,\dots \tau_n)$ be a spine as in Definition~\ref{def: spine_surface_with_divides}. The spine determines a well-defined bridge multisection $(W,S)$ of a symplectic surface $S$ in a Weinstein domain $W$. 
\end{proposition}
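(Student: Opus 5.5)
The plan mirrors the smooth argument of Meier--Zupan and the manifold-level construction of Islambouli--Starkston: build the pair $(W,S)$ sector by sector from the spine, and show that each choice made along the way is unique up to symplectic isotopy.

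\textbf{Step 1: the ambient domain.} The handlebodies $H_1,\dots,H_n$, with their contact structures inducing a common dividing set on $\Sigma$, form a spine with divides in the sense of \cite{IS-divides}. For each $i=2,\dots,n$, the union $H_i\cup\bar H_{i-1}$ is $(\#^{k_i}S^1\times S^2,\xi_{std})$. By the uniqueness of minimal strong/Weinstein fillings of this contact manifold \cite{mcduff_rational-ruled,Wendl_strongly}, it has a unique filling $W_i\cong\natural^{k_i}S^1\times D^3$. We glue these fillings along the $H_i$ as in \cite[Thm.~3.1]{IS-divides}, after smoothing corners so that the Liouville fields match near each $H_i$. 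This gives a Weinstein domain $W$ whose contact boundary is $H_1\cup\bar H_n$, tight by hypothesis, with monodromy $\phi=\phi_{n-1,n}\circ\cdots\circ\phi_{1,2}$. I will take this step as already established in \cite{IS-divides}.

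\textbf{Step 2: the surface, sector by sector.} In each sector, $(H_i,\tau_i)\cup(\bar H_{i-1},\bar\tau_{i-1})$ is a braided max-$sl$ unlink $L_i$ in $\partial W_i$. By Proposition~\ref{uniquedisks.prop}, $L_i$ bounds a trivial symplectic $c_i$-disk system $\mathcal D_i\subset W_i$, unique up to symplectic isotopy rel $L_i$. Lemma~\ref{lem:unique_unlink} and Remark~\ref{rem:disk_isotopy} show that this choice depends only on the transverse isotopy class of $L_i$. Adjacent sectors share the tangle $\tau_i\subset H_i$, so $\mathcal D_i$ and $\mathcal D_{i+1}$ agree along $\tau_i$. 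All of them meet $\Sigma$ in the same $2b$ points $\{p_1,\dots,p_{2b}\}$.

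\textbf{Step 3: gluing into a symplectic surface.} Set $S=\bigcup_i\mathcal D_i$; this is a properly embedded surface with $\partial S=\tau_1\cup\bar\tau_n$. Its boundary is a transverse braid in the boundary open book, since $\tau_1$ and $\tau_n$ are braids in half-open books. By construction, $S$ is in bridge position with $S\cap W_i=\mathcal D_i$. Well-definedness follows because each $\mathcal D_i$ is unique up to symplectic isotopy fixing its boundary, so the glued surface is well defined up to symplectic isotopy.

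\textbf{Main obstacle: symplecticity across the seams.} The hard part is showing that $S$ is genuinely symplectic (and smooth) along the codimension-one seams $H_i$ and the corner $\Sigma$. Near each $H_i$, I would arrange both adjacent disk systems to be cylindrical in a collar: a symplectization-type neighborhood of the convex boundary in which $\tau_i$ flows along the Liouville field. To do this, I would first isotope each $\mathcal D_i$ near its boundary to the trace of $L_i$ under the Liouville flow, using that the disks are transverse to the contact boundary. The two halves then glue to a product $\tau_i\times[-\epsilon,\epsilon]$. This product is symplectic because $\tau_i$ is transverse to the contact structures of the half-open books on $H_i$.

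At $\Sigma$, where the points $p_j$ lie on the dividing set, I would use the local model for a multisection corner from \cite{IS-divides}. In that model, the braid strands are transverse to the pages near the binding, and each strand crosses its page at an interior point. Since being symplectic is an open condition, as in the proof of Proposition~\ref{realizing-symplectic-surface.prop}, the smoothed union remains symplectic.
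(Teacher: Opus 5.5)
Your proposal is correct and follows the same route as the paper. The paper's proof simply notes that each sector $W_i$ is determined by the uniqueness of fillings of $(\#^{k_i}S^1\times S^2,\xi_{std})$, and each disk system $\mathcal{D}_i$ by Proposition~\ref{uniquedisks.prop}; your discussion of symplecticity along the seams goes beyond what the paper addresses.

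Two small corrections in that extra material. First, the bridge points $p_j$ lie in the interiors of the pages $F_0$ and $F_{1/2}$, not on the dividing set; each shadow arc crosses $\Gamma$ exactly once. Second, the filling Liouville fields of two adjacent sectors must point in opposite directions across their common $H_i$, so they cannot literally be made to match. Only the symplectic forms are glued, and that is the part you are taking from \cite{IS-divides}.
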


\begin{proof}
    It suffices to determine the 4-dimensional handlebodies $W_i$ and the symplectic disk systems $\mathcal{D}_i$ in them. This is true since each $(H_i\cup \bar H_{i-1},\xi_i)=(\#_{k_i}S^1 \times S^2,\xi_{std})$ has a unique Stein filling up to deformation~\cite{mcduff_rational-ruled}; see also Remark 1.2 of \cite{IS-divides}. Also, each \textmaxsl unlink $\tau_i\cup \bar{\tau}_{i-1}$ bounds a unique symplectic disk system by Proposition~\ref{uniquedisks.prop}.
\end{proof}

\subsection{Existence and banded bridge position}

We now discuss how to find bridge multisections with divides for positive ascending surfaces. The input will be a transverse banded unlink $(U,\nu)$ inside a Weinstein handlebody diagram $(\Lambda,k)$ for $W$. We first review how bisections of $W$ can be built from $(\Lambda,k)$ (Rem.~\ref{rem:KWtobisection}), and then we discuss what embeddings of the surface data $(U,\nu)$ will determine a multisection with divides (Def.~\ref{def:banded bridge position}).

\subsubsection{Bisections from Kirby--Weinstein diagrams~\cite{IS-divides}} \label{rem:KWtobisection}
Let $(W,\omega)$ be a Weinstein domain with a fixed Kirby--Weinstein diagram $(\Lambda,k)$. Let $W_1=\natural^{k_1} S^1 \times D^3$ be the union of the 0- and 1-handles of $(\Lambda,k)$, with $k_1=k$, and symplectic structure given by the filling of $(\#^{k_1} S^1\times S^2,\xi_{std})$. Let $\partial W_1= H_1\cup_\Sigma \overline{H}_2$ be a contact Heegaard splitting of $(\#^{k_1} S^1\times S^2,\xi_{std})$ so that $\Lambda$ is a subset of the Legendrian core of $H_2$, and so that the framing on each component $\Lambda_i$ of $\Lambda$ induced by the splitting surface $\Sigma$ is $tb(\Lambda_i)$. 
As discussed in Section~\ref{section:contact heegaard}, this contact Heegaard splitting induces an open book decomposition $(\Gamma,\pi)$ for $\partial W_1$. Finally, let $H_3=H_2[\Lambda]$, the result of Legendrian surgery (framing $tb(\Lambda_i)-1$) on $H_2$ along the link $\Lambda$, and let $W_2$ be a collar of $H_2 $ together with the 2-handles attached along $\Lambda$.  Observe that $\partial W_2=H_2\cup_\Sigma\overline{H_3}$, and $\partial W=H_1\cup_\Sigma \overline{H}_3$ are both contact Heegaard splittings with Heegaard surface $\Sigma$ and dividing set the same set of curves $\Gamma\subset \Sigma$. Under these conditions, Islambouli and Starkston showed in \cite[Thm.~3.1]{IS-divides} that both $W_1$ and $W_2$ are the unique Weinstein fillings of $\#^{k_j}S^1\times S^2$ for $i=1,2$. Hence, $W_1\cup W_2$ is a bisection of $W$ with divides. Bisections built this way will be denoted by $\Tcal(\Lambda,k,\Sigma)$.

\subsubsection{Positive bands and banded bridge position}

We return to our transverse banded unlink $(U,\nu)$ in the Kirby--Weinstein diagram $(\Lambda,k)$. If $U$ is braided with respect to an open book decomposition $(\Gamma,\pi)$ with pages $F_\theta$, then transverse banded unlinks $(U,\nu)$ can be described by a pointed mapping class group element $F_0\rightarrow F_1$ representing an unlink, together with positive bands whose cores lie in pages of the open book.  To build a multisection with divides for $S(U,\nu)$, this braiding will be done with respect to the open book decomposition in Section~\ref{rem:KWtobisection}.
\begin{example}\label{eg: framedband}
    Consider $(M, \xi)$ supported by an open book $(\Gamma, \pi)$ with page $F_{\theta}$ for $\theta \in S^1$, and a transverse link $L$ which is braided with respect to $(\Gamma,\pi)$. Define a collection of annuli $\mathcal{A}$ with as many components as $|L|$, in $(M,\xi)$, by thickening the points in $L \cap F_{\theta}$ to intervals $[-1,1]$. The boundary of these annuli are two links $L_{-1}$ and $L_1$ that are both isotopic to $L$. Fix a page $F_{\theta_0}$, and let $\alpha$ be an arc in $F_{\theta_0}$ connecting two points in $L \cap F_{\theta_0}$ that live near the binding. Then, modify $\mathcal{A}$ by adding a half-twisted band along $\alpha$ as in Figure~\ref{fig:braidedbands};
    the resulting surface in $(M, \xi)$ is a collection of annuli and a single pair of pants. Call this modified surface $S.$ Note that the open book foliation on the pair of pants component of $S$ has one hyperbolic singularity 
    as in Fig 4.(b) of \cite{hayden21}.
\end{example}
\begin{figure}[h]
\centering
\includegraphics[width=8cm]{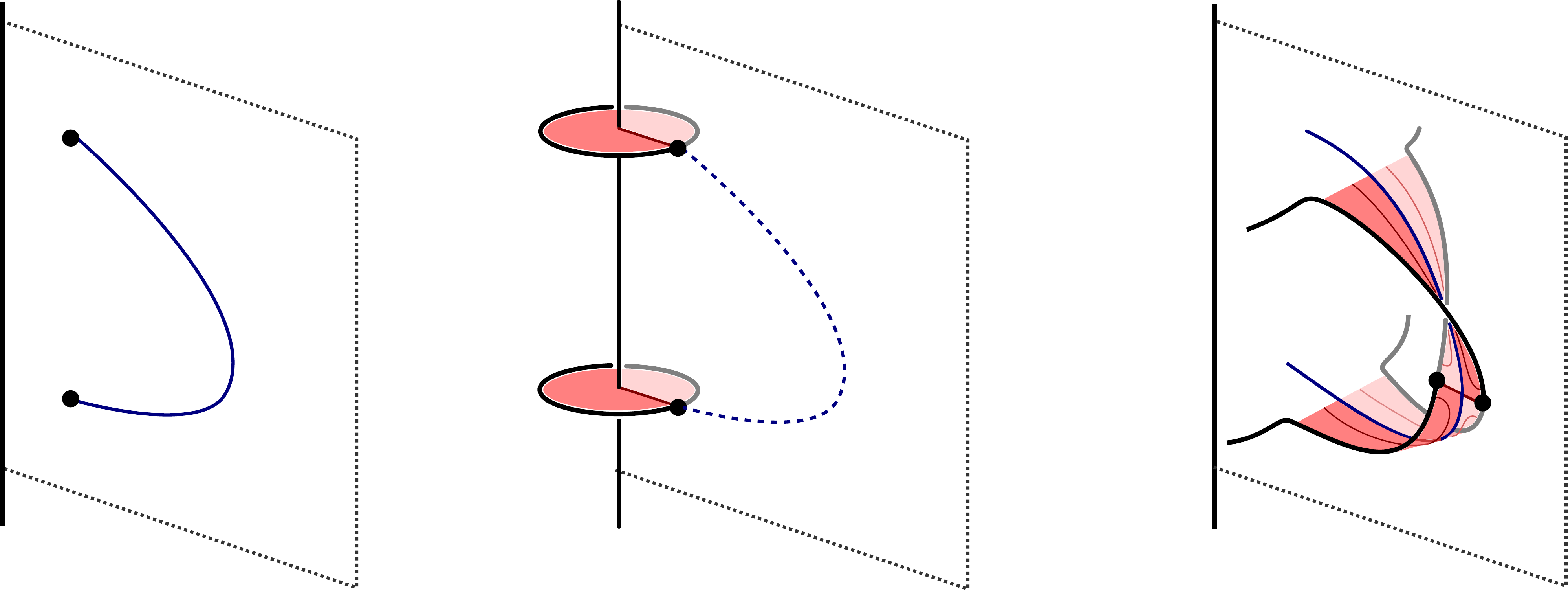}
\caption{(1/2)-framed braided bands are determined by arcs embedded in a page of the open book decomposition. We thank Kyle Hayden for allowing us to import this figure from \cite{hayden21}.}
\label{fig:braidedbands}
\end{figure}

\begin{definition}\label{def: framedband}
    A positive band for which the image of the surface $S$ when projected to $M$ is as in Example~\ref{eg: framedband} is called a \emph{(1/2)-framed band determined by the arc $\alpha$}. 
\end{definition}

\begin{definition}\label{def:braid_stab}
Let $L$ be a braid in the the open book $(\Gamma, \pi)$ and let $U$ be a small meridian of the binding $\Gamma$. A \emph{positive Markov stabilization} of $L$ is the new braid obtained by adding a $(1/2)$-framed braided band to the link $L\cup U$ along an arc with endpoints in $L$ and $U$. 
\end{definition}
Our definition of Markov stabilization is consistent with Pavelescu's~\cite{pavelescu2012braiding}. It changes the pointed open book associated to $L$ by adding a half-twist between a puncture of $L$ and a new puncture; see \cite{hayden21} for details.

\begin{definition}\label{def:banded bridge position}
A transverse banded link $(L,\nu)$ is said to be in \emph{banded bridge position} with respect to an open book $(\Gamma,\pi)$ if

\begin{enumerate}
    \item[(BB-1)] \namedlabel{item:bb1}{(BB-1)} $L$ is braided with respect to $(\Gamma,\pi)$, with corresponding transverse bridge splitting $\tau_i=L\cap H_i$ in the corresponding half-open book decomposition of $(M,\xi)$;
    
    \item[(BB-2)] \namedlabel{item:bb2}{(BB-2)} each band of $\nu$ is a $1/2$-framed braided band as in Definition \ref{def: framedband};
    
    \item[(BB-3)] \namedlabel{item:bb3}{(BB-3)} the cores of $\nu$ are embedded in the same page $F_{0}$, with $\partial H_1=F_{0}\cup \overline{F_{1/2}}$, such that they have pairwise disjoint interiors and the cores of $\nu$ form an acyclic 1-complex in $F_{0}$; and
    
    \item[(BB-4)] \namedlabel{item:bb4}{(BB-4)} there is a choice of shadows for $\tau_2$ on $F_0\cup \overline{F_{1/2}}$, crossing $\partial F_0$ once transversely, that are disjoint from the cores of $\nu$.
    
\end{enumerate}
In this case, we say that the cores of $\nu$ are dual to the braid $L$. We refer to the elements of $\nu$ as {\it dual bands}. 
\end{definition}

Notice that the definition above is for links rather than unlinks to allow the possibility of working with multisections with more than two 4-dimensional sectors. 
Intuitively, we want the bands from \ref{item:bb2} to be described by half-twists along arcs in a fixed page, and the union of such core arcs (from \ref{item:bb3}) to form a forest with vertices being the punctures of the page. 
In practice, to test if \ref{item:bb3} holds, we will slide the bands to lie near each other, in a crossingless neighborhood of the braid $L$. See how we project the cores of the bands to one page in the left panels of Figures~\ref{fig:fig_qp_trefoil} and \ref{fig:BVM_5}.

Proposition~\ref{prop:banded_gives_bisection} states that transverse banded unlinks in banded bridge position induce multisections with divides for their corresponding ascending surfaces. In what follows, we will explain how to find the spine of a bridge bisection with divides from such a transverse banded unlink. For proofs, we refer the reader to Section~\ref{sec:proofs}.

Consider a bisection with divides $W=W_1\cup W_2$ 
as in Section~\ref{rem:KWtobisection} and an ascending surface $S(U,\nu)$ realizing the banded unlink $(U,\nu)$ (as constructed in Proposition~\ref{realizing-symplectic-surface.prop}) inside of $W$. Suppose that $(U,\nu)$ is in banded bridge position, and suppose that the bands in $\nu$ are inside the handlebody $H_2$, near the boundary of $H_2$. 
Let $\tau_i=U\cap H_i$ be the braided arcs in the half-open books $H_i$ for $i=1,2$. 
As the attaching region of the 2-handles $\Lambda$ is a subset of the core of $H_2$, the surgered tangle $\tau_2[\nu]$ can be regarded as a transverse arc in the surgered handlebody $H_2[\Lambda]=H_3$. In turn, $H_2\cup\overline{H}_3$ comes with an open book decomposition with binding $\Gamma$ for which $\tau_2\cup \overline{\tau_2[\nu]}$ is a braid. The proof of the following lemma is contained in that of Proposition~\ref{prop:banded_gives_bisection}.

\begin{lemma}\label{lem:spine_from_bridgeposition}
Given $S(U,\nu)\subset W(\Lambda,k)$ as above, the pair admits a bridge bisection with divides with spine $\left(H_1, H_2, H_2[\Lambda]; \tau_1, \tau_2, \tau_2[\nu]\right)$. 
\end{lemma}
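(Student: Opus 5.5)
We verify the conditions of Definition~\ref{def: spine_surface_with_divides} for the triple $(H_1,H_2,H_3=H_2[\Lambda];\tau_1,\tau_2,\tau_2[\nu])$. Then we check that the bridge bisection this spine produces via Proposition~\ref{prop: spine_determines_symplectic surface} is $S(U,\nu)$ up to symplectic isotopy.

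\textbf{The 3-dimensional data.} By the construction of Section~\ref{rem:KWtobisection}, the unions $H_1\cup\overline{H}_2$ and $H_2\cup\overline{H}_3$ are contact Heegaard splittings of $(\#^{k}S^1\times S^2,\xi_{std})$ and of $\partial W_2$, respectively. Here $\partial W_2$ is the standard tight $\#^{k_2}S^1\times S^2$, by \cite[Thm.~3.1]{IS-divides}. The union $H_1\cup\overline{H}_3$ is a contact Heegaard splitting of $\partial W$, which is tight because it is Weinstein fillable. All three handlebodies induce the common dividing set $\Gamma$ on $\Sigma$. By \ref{item:bb1}, $\tau_1$ and $\tau_2$ are braids in their half-open books, and $\tau_1\cup\overline{\tau}_2=U$ is a braided \textmaxsl unlink.

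\textbf{The braid $\tau_2[\nu]$.} By \ref{item:bb2} and \ref{item:bb3}, we may push the bands into a collar of $F_0\subset\partial H_2$. There each band is a positive half-twist $H_{\alpha}$ about its core arc $\alpha$. Since the bands are disjoint from $\Lambda$, which sits in the core of $H_2$, Legendrian surgery does not affect them. Hence $\tau_2[\nu]$ is a braid in the half-open book of $H_3$, and $\tau_2\cup\overline{\tau_2[\nu]}$ is a braid in the open book on $\partial W_2$. Its pointed monodromy is the monodromy $\phi_{2,3}$ (the product of positive Dehn twists along $\Lambda$) composed with $\prod_{\alpha\in\nu}H_\alpha$.

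\textbf{Main obstacle: this braid is a \textmaxsl unlink with the right number of components.} I would argue as follows.
\begin{enumerate}
\item By \ref{item:bb4}, there are shadows for $\tau_2$ disjoint from the band cores. Using these shadows, $\tau_2\cup\overline{\tau_2}$ is the trivial braid, i.e.\ $b$ small meridians of the binding, each with self-linking $-1$. The Legendrian surgery along $\Lambda$ happens in the core, away from these shadows. So without the bands, the arcs in $H_3$ are again boundary parallel through the same shadows, and the twists along $\Lambda$ act trivially on the marked points.
\item Adding each half-twisted band then acts like a positive Markov stabilization (Definition~\ref{def:braid_stab}): one band along an arc joining two distinct components merges them into one unknot.
\item Because the cores form an acyclic 1-complex (\ref{item:bb3}), every band joins distinct components. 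The result is therefore a braided unlink with $b-|\nu|$ components, obtained from standard meridians by positive Markov stabilizations.
\item Positive stabilization preserves transverse type \cite{pavelescu2012braiding}, so each component has $sl=-1$, and Lemma~\ref{lem:unique_unlink} identifies the link as the \textmaxsl unlink.
\end{enumerate}
The delicate point is justifying that the shadows of \ref{item:bb4} persist through the surgery and the band half-twists. I would handle this by working entirely in a collar of $\Sigma$ disjoint from $\Lambda$.

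\textbf{Identifying the surface.} Finally, I check that the resulting surface agrees with $S(U,\nu)$ from Proposition~\ref{realizing-symplectic-surface.prop}.
\begin{itemize}
\item In $W_1$, the disk system $S\cap W_1$ is the unique filling of $U$ given by Proposition~\ref{uniquedisks.prop}.
\item In $W_2$, which is a collar of $H_2$ together with the 2-handles, the surface $S\cap W_2$ is the band cobordism $S_\nu$ followed by the product $\{t\}\times U[\nu]$. Viewed from $\partial W_2$, this is a trivial disk system bounded by the \textmaxsl unlink $\tau_2\cup\overline{\tau_2[\nu]}$.
\item Uniqueness in Proposition~\ref{uniquedisks.prop}, together with Remark~\ref{rem:disk_isotopy}, identifies the pieces up to symplectic isotopy.
\end{itemize}
This shows that $(W,S(U,\nu))$ is the bridge bisection determined by the spine.
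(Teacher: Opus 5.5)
Your argument is correct and is essentially the paper's. The paper proves this lemma inside Proposition~\ref{prop:banded_gives_bisection}, checking that $S\cap W_1$ and $S\cap W_2$ are trivial symplectic disk systems, and obtains the key \textmaxsl unlink property of $\tau_2\cup\overline{\tau_2[\nu]}$ from Lemma~\ref{lem: bridge_position_bands}, which treats each band as a positive (de)stabilization of the meridian unlink $\tau_2\cup\overline{\tau_2}$ using the disks $E_t$ supplied by \ref{item:bb4}. The one point to tighten is your steps (2)--(3): a band is literally a positive Markov stabilization only when one of its ends lies on a still-isolated meridian whose disk $E_t$ meets no other band, so you should add the bands in leaf order of the forest from \ref{item:bb3} (equivalently, peel off a leaf band at each step, as the paper does), rather than using acyclicity only to count components.
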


\section{Diagrams of multisections with divides for ascending surfaces}

Multisections of positive ascending surfaces in Weinstein domains can be encoded diagrammatically. In this section, we introduce three approaches to describing a bridge bisection with divides: \textit{braided tri-planes}, \textit{shadow diagrams}, and \textit{multisections of pointed monodromies}. 
These are the symplectic analogues of the tri-plane diagrams and shadow diagrams for smooth surfaces in \cite{meier2018bridge}. In Example~\ref{Baykur_VHM_examples}, we begin with a quasipositive factorization of a braid, which determines a positive ascending surface in the 4-ball, and derive a braided tri-plane diagram. 
In Example \ref{2_compo_unlink_one_band_example}, we start with a Kirby--Weinstein picture of a positive ascending surface and derive the second and third descriptions. 

\subsection{\textmaxsl braided tri-plane diagrams and examples}\label{sec:braided_tri-plane_examples}
A bisected positive ascending surface in $B^4$ can be represented by a tri-plane diagram, as in \cite{meier2017bridge}, with additional restrictions on the tangles. There is a natural generalization to diagrams for multisected surfaces in $B^4$.  Below, $\overline{\tau}$ denotes the mirror image of the tangle $\tau$, i.e., the inverse, when considered as an element of the braid group on $b$ strands.

\begin{definition}
\label{def: braided_tri-plane}
A $(b; c_1,c_2)$-\emph{\textmaxsl braided tri-plane diagram} is a triple $(\tau_1,\tau_2,\tau_3)$ of braids such that $\tau_1\cup \overline{\tau}_3=\partial S$, $\tau_1\cup \overline \tau_2$ is a $c_2$-component \textmaxsl unlink, and $\tau_2\cup \overline{\tau}_3$ is a $c_3$-component \textmaxsl unlink.    
\end{definition}

\begin{remark}
    Braided tri-plane diagrams were introduced in \cite[Def 1.2]{aranda25}. Because our surfaces have boundary, we only require that two of the three pairwise unions of tangles are unlinks.  Aside from that fact, our \textmaxsl braided tri-plane diagrams satisfy criteria 1) and 2) of \cite[Definition 1.2]{aranda25}. In particular, a \textmaxsl braided  unlink is a positive stabilization of the trivial braid. Because all of the braided tri-plane diagrams in this paper are max-\textit{sl}, we will simply say ``braided tri-plane diagram" from now on.
\end{remark}

To obtain a braided tri-plane presentation for $S$ from a banded unlink presentation $(U,\nu)$ for $S$, one first isotopes 
$(U,\nu)$ into banded bridge position (Def.~\ref{def:banded bridge position}) using a modification of Procedure 5.8 of \cite{aranda25} (see the proof of Proposition~\ref{prop:bandedunlink_implies_bandedbridgeposition} for details). The corresponding bridge decomposition $U=\tau_1\cup \overline{\tau}_2$ gives the first two tangles; the third tangle $\tau_3=\tau_2[\nu]$ is obtained from $\tau_2$ by resolving the (positive) bands.  A banded diagram and braided tri-plane diagram for an ascending surface bounded by the positive trefoil is given in Figure~\ref{fig:fig_qp_trefoil}.

\begin{example}\label{Baykur_VHM_examples}
Work of Baykur and Van Horn-Morris \cite{baykur_VHM_infinite} gives examples of infinitely many pairwise distinct fillings of the same quasipositive braid; see Figure~\ref{fig:BVM_12}(A). These surfaces are described as braided banded presentations $(L,\nu)$ where $\nu=\{v_1,\dots, v_4\}$ and $v_1$ has $n$ twists. In Figure~\ref{fig:BVM_12}(B), we only draw the cores of the bands $v_1,\dots, v_4$ and project $v_1$ and $v_2$ onto the same disk page; note that these projections may intersect as $v_1$ is obtained by $n$ half-twists. In such a disk, we find an arc $\alpha$ disjoint from the cores of $v_1$ and $v_2$ that connects an endpoint of $v_2$ with the boundary of the disk page. The red stabilization arc is a push out of the endpoint union of $\alpha$ with the core of $v_2$. 
After stabilizing $L$ in Figure~\ref{fig:BVM_12}(C), we slide the bands $v_2$ and $v_3$ down the new crossings to the bands in Figure~\ref{fig:BVM_12}(D). 
(In general, this last step may not be an easy task as the stabilizing red arc may be complicated.) Note that sliding an arc past a stabilization along red arc is the same as a half-twist map along the same arc; see Figure~\ref{fig:BVM_12}(E). Parts (F) and (G) of Figure~\ref{fig:BVM_12} are the image under the half-twist corresponding to sliding past bands $v_2$ and $v_3$, respectively. 
Now, as $v_1$, $v_2$, and $v_3$ lie in a neighborhood of the same disk page, we can project them onto the disk and observe that the core of $v_3$ crosses the core of $v_1$ (for large $n$). \\
In Figure~\ref{fig:BVM_3}(A), we find a new red stabilization arc to get a new braid representing $L$. Observe that $v_3$ slides past the new crossings as in Figure~\ref{fig:BVM_3}(C); again, one can perform a half-twist to verify this claim. To end, we slide the band $v_4$ up so that it appears below $v_1$ and projects as in Figure~\ref{fig:BVM_4}(A). 
After one last stabilization and arc slide, we obtain a banded bridge position as in Figure~\ref{fig:BVM_4}(C). The respective braided tri-plane diagram is drawn in Figure~\ref{fig:BVM_5}. 

\end{example}

\begin{figure}[h]
    \centering    
    \includegraphics[width=0.9\linewidth]{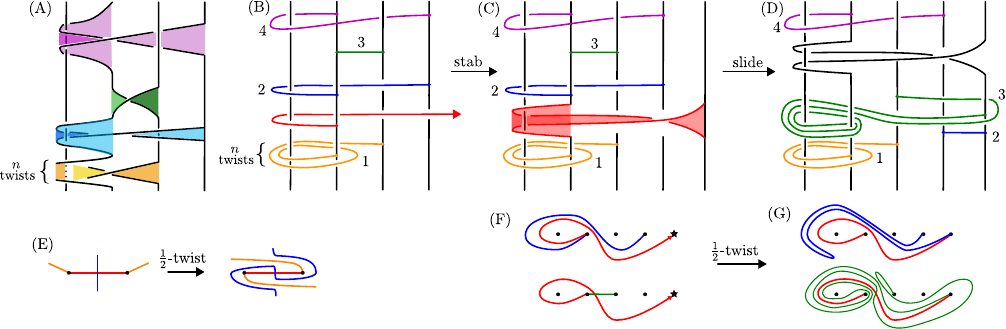}
    \caption{Ribbon surfaces with the isotopic boundary (1/4). (A) Braided banded presentations of such surfaces; in (B), we only drew the cores of the bands. (B)-(D) show modifications to the banded descriptions explained in Example~\ref{Baykur_VHM_examples}.}
    \label{fig:BVM_12}
\end{figure}

\begin{figure}[h]
    \centering
    \includegraphics[width=0.7\linewidth]{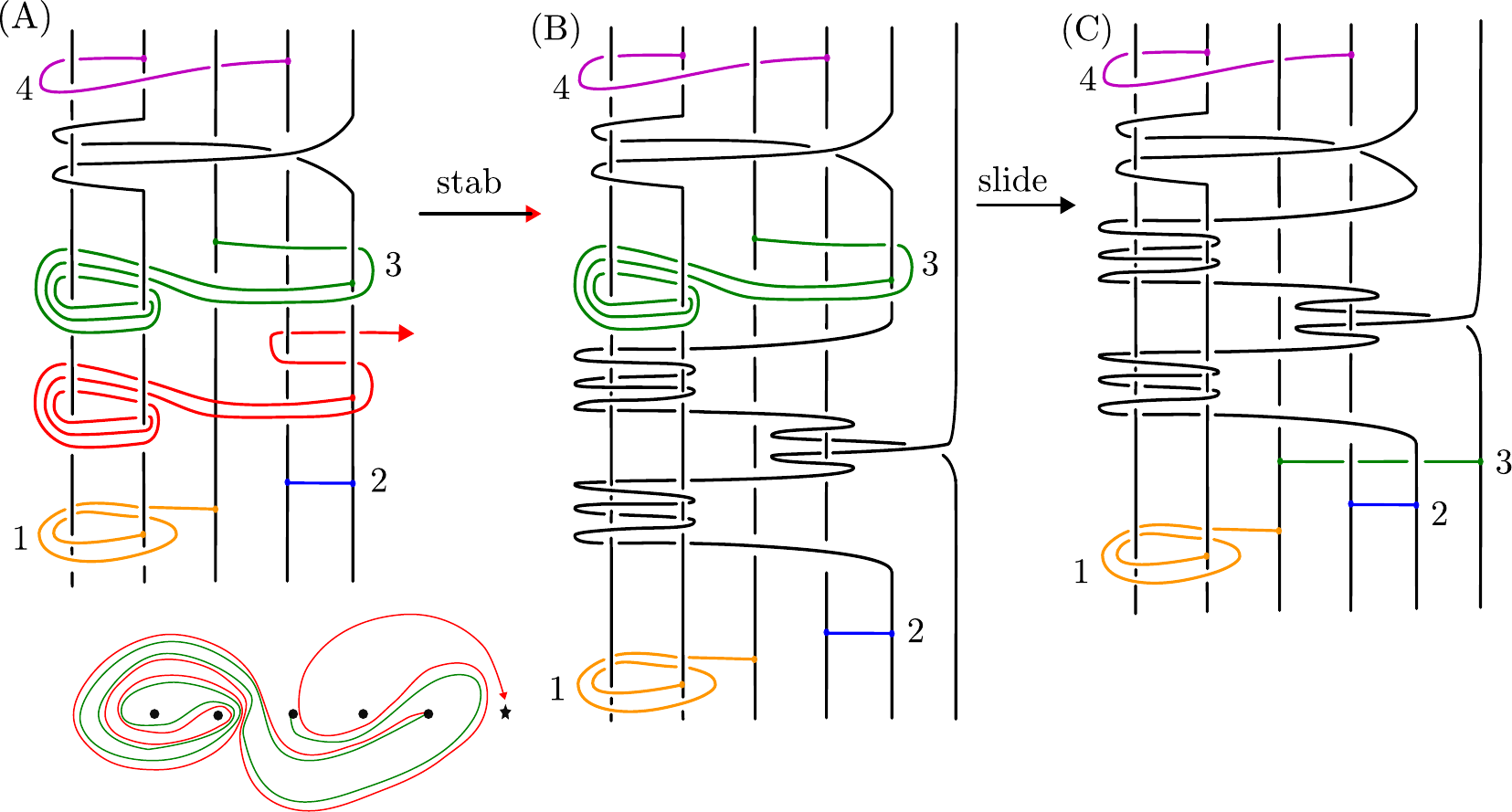}
    \caption{Ribbon surfaces with the isotopic boundary (2/4). (A)-(C) show modifications to the banded descriptions explained in Example~\ref{Baykur_VHM_examples}.}
    \label{fig:BVM_3}
\end{figure}

\begin{figure}[h]
    \centering   
    \includegraphics[width=0.7\linewidth]{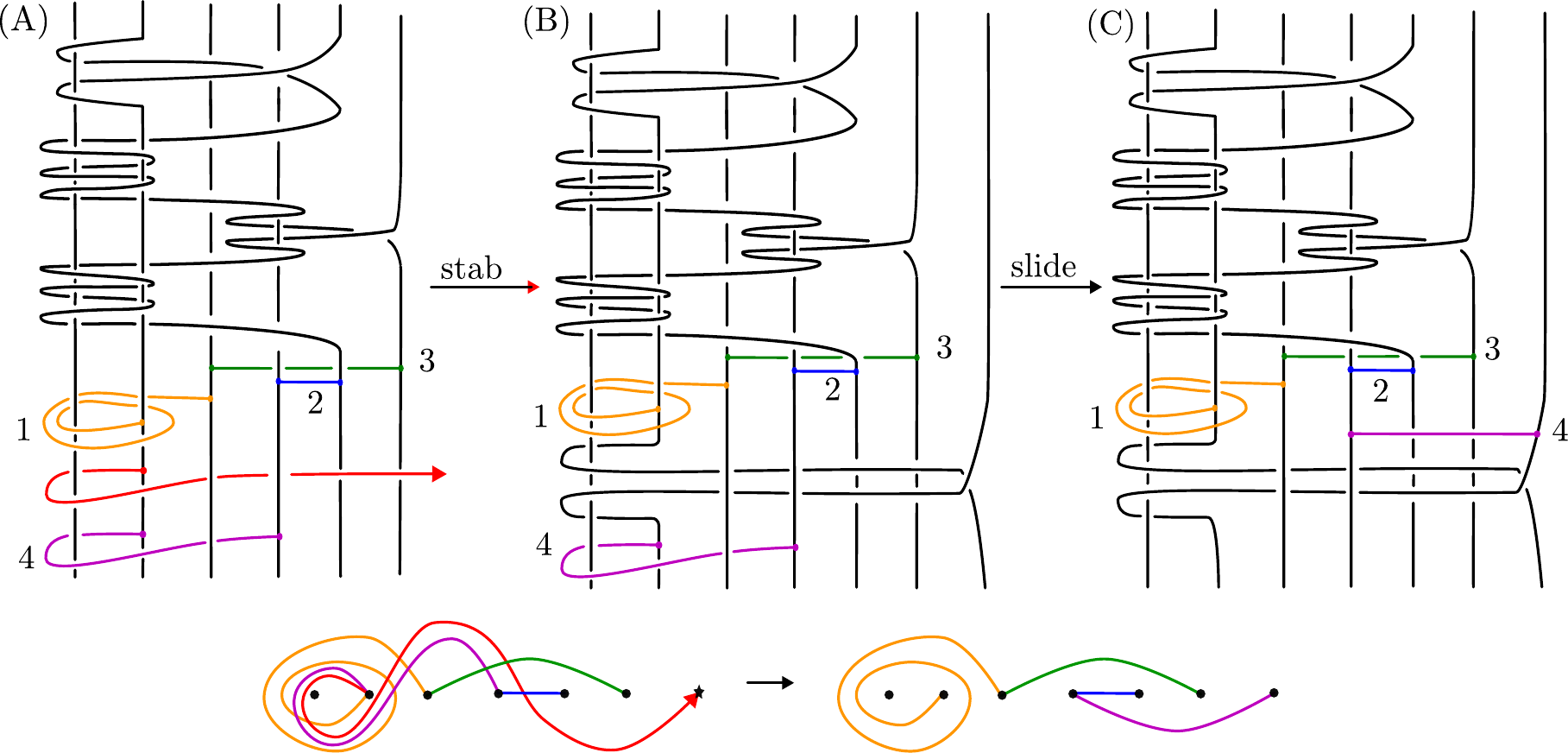}
    \caption{Ribbon surfaces with the isotopic boundary (3/4). (A)-(C) show modifications to the banded descriptions explained in Example~\ref{Baykur_VHM_examples}.}
    \label{fig:BVM_4}
\end{figure}

\begin{figure}[h]
    \centering
    \includegraphics[width=0.8  \linewidth]{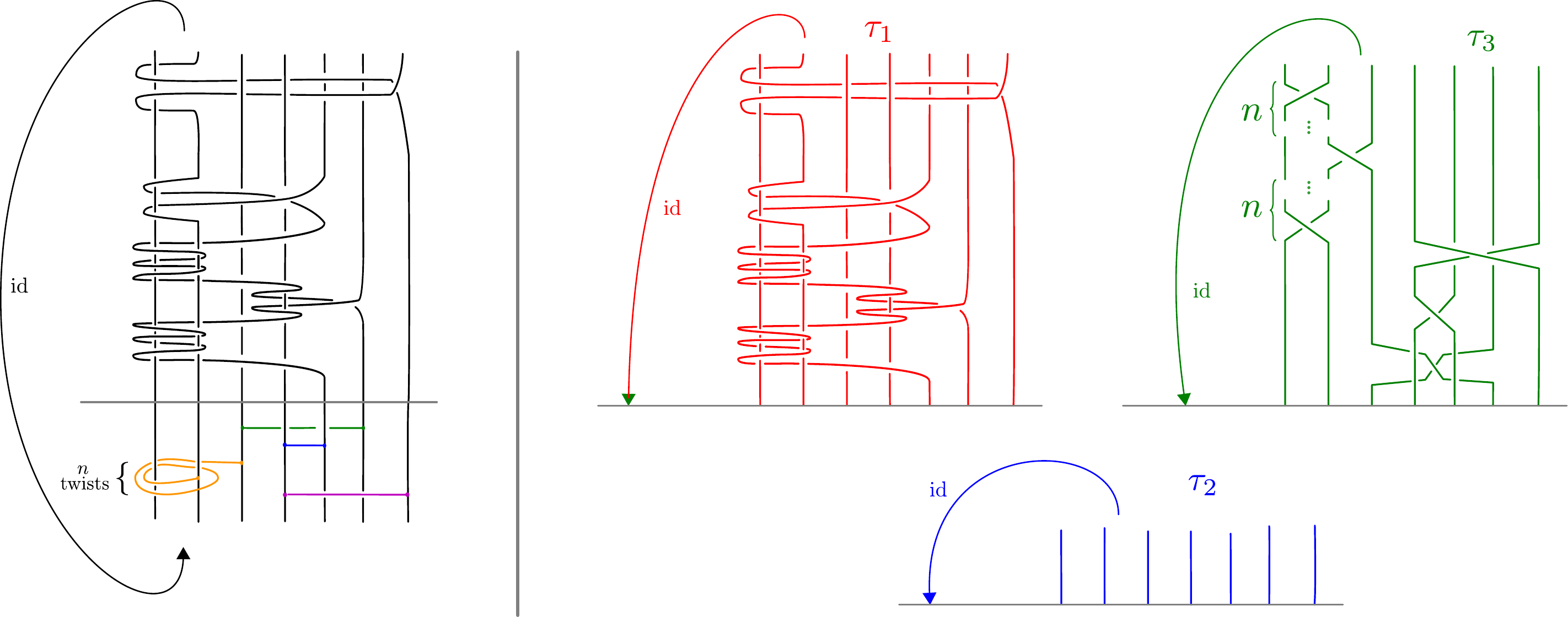}
    \caption{Ribbon surfaces with the isotopic boundary from Example~\ref{Baykur_VHM_examples} (4/4). (Left) Banded bridge position and (right) braided tri-plane diagram for these surfaces.}
    \label{fig:BVM_5}
\end{figure}

\subsection{Shadow diagrams and pointed monodromies} We are ready to introduce two approaches to describing a bridge bisection with divides: shadow diagrams and multisections of pointed monodromies. 

\begin{definition}[SDD]\label{shadowdiagram.def}
    A \emph{shadow diagram with divides} is a multisection diagram with divides $(\Sigma,\alpha_1,\dots,\alpha_n, \Gamma)$ together with a collection $(\tau_1',\dots,\tau_n')$ of $n$ $b$-tuples $\tau_i'$ of disjoint arcs with endpoints $\{p_1,\dots,p_{2b}\}\subset \Sigma$.  Each arc in each $\tau_i'$ intersects $\Gamma$ geometrically once, and consecutive pairs of arc $b$-tuples $(\tau_i',\tau_{i+1}')$ are carried to one another by a pointed mapping class group element $\hat\phi_{i,i+1}$ representing a \textmaxsl unlink in the contact Heegaard splitting $(\Sigma,\alpha_i,\alpha_{i+1})$ of the tight contact structure on $\#^{k_i} S^1\times S^2$, for $i\in\{1,\dots,n-1\}$. 
\end{definition}

\begin{proposition}\label{prop:SDD}
    Let $S$ be a symplectic surface in bridge position with respect to a multisection with divides structure on $(W,\omega)$.  Then there exists a shadow diagram with divides for $S$. Conversely, a shadow diagram with divides determines a bridge-bisected symplectic surface in the Weinstein domain corresponding to the underlying multisection diagram with divides.
\end{proposition}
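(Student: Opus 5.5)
For the forward direction, we start from a bridge multisection with divides $(W,S)=(W_1,\mathcal{D}_1)\cup\dots\cup(W_n,\mathcal{D}_n)$ and read off its spine $(H_1,\dots,H_n;\tau_1,\dots,\tau_n)$ as in Definition~\ref{def: spine_surface_with_divides}. Each $H_i$ is a contact handlebody carrying a half-open book whose pages $F_0,F_{1/2}$ glue along the common dividing set $\Gamma$ to give $\Sigma$. Choosing an arc system on a page $F_0$ of the half-open book of $H_i$ yields a cut system $\alpha_i$ on $\Sigma$ in which each curve meets $\Gamma$ twice. This recovers the multisection diagram with divides $(\Sigma,\alpha_1,\dots,\alpha_n,\Gamma)$ exactly as in \cite{IS-divides}.

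For the arcs, we use that $\tau_i$ is a $b$-strand braid in the half-open book of $H_i$. We flow each strand along the page direction: a strand $b_j\colon[0,1/2]\to H_i$ is transverse to the pages, so the fibration $\pi$ lets us isotope it, rel endpoints, into $\partial H_i=F_0\cup_\Gamma\overline{F}_{1/2}$. The image is an arc running from $p\in F_0$ across $\Gamma$ once to $p'\in F_{1/2}$. Since the strands are disjoint and the isotopy can be taken page-by-page, these shadows $\tau_i'$ are disjoint and each meets $\Gamma$ exactly once. We must also ensure the shadows are disjoint from the $\alpha_i$ curves, or at least compatible with them; we arrange this by choosing the arc system on $F_0$ to avoid the marked points and the shadow arcs, which is possible after a small isotopy.

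The main step, and the one we expect to be the real obstacle, is the condition on consecutive pairs. By hypothesis $\tau_i\cup\overline{\tau}_{i+1}$ is a braided \textmaxsl unlink in the open book on $H_i\cup\overline{H}_{i+1}$. We must show that the shadows $\tau_i'$ and $\tau_{i+1}'$ are carried to one another by the pointed monodromy $\hat\phi_{i,i+1}$ of that closed braid, which represents this unlink. To do this, we identify $H_{i+1}$ with the second half of the open book via its own half-open book. The gluing map between the $F_{1/2}$ pages of the two half-open books is the identity on $\Sigma$. Hence, if we track a marked page from $F_0$ through $H_i$ and back through $\overline{H}_{i+1}$, we obtain a pointed diffeomorphism of $(F_0,P)$ whose restriction sends the $F_0$-endpoints of $\tau_i'$ to those of $\tau_{i+1}'$, and which is isotopic to the unpointed monodromy $\phi_{i,i+1}$. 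Care is needed so that the arcs, and not just their endpoints, correspond, since half-open-book shadows are determined only up to isotopy rel the marked points. We plan to handle this with the standard fact that a braid in a half-open book is determined by its shadow up to isotopy of the half-open book, as in transverse bridge position.

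For the converse, we reverse each step. The cut systems $\alpha_i$ together with $\Gamma$ determine contact handlebodies $H_i$ with half-open books. The shadows $\tau_i'$, each crossing $\Gamma$ once, can be pushed into $H_i$ transversely to the pages, giving braids $\tau_i$. The condition that $\hat\phi_{i,i+1}$ represents a \textmaxsl unlink says exactly that $\tau_i\cup\overline{\tau}_{i+1}$ is a braided \textmaxsl unlink in $\#^{k_i}S^1\times S^2$. Thus $(H_1,\dots,H_n;\tau_1,\dots,\tau_n)$ is a spine, and Proposition~\ref{prop: spine_determines_symplectic surface} then produces a well-defined bridge-multisected symplectic surface in the Weinstein domain of the underlying diagram.
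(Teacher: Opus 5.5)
Your proposal is correct and follows essentially the same route as the paper: read off the spine, use the correspondence between braids in half-open books and shadow arcs crossing $\Gamma$ once to pass between spines and shadow diagrams with divides, and for the converse build a spine and invoke Proposition~\ref{prop: spine_determines_symplectic surface}. The paper treats the consecutive-pair (pointed monodromy) condition as immediate from this correspondence, so your more careful discussion of it adds detail but does not change the argument.
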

\begin{proof}
Following the discussion at the end of Section~\ref{subsec: braids}, braids in half-open books can be described by shadow diagrams. Thus, given a bridge bisection with divides, its spine determines a shadow diagram with divides. Conversely, shadows in a half-open book describe unique isotopy classes of braids in half-open books rel boundary. In particular, a shadow diagram describes a well-defined spine as in Definition~\ref{def: spine_surface_with_divides}. The second statement follows from Proposition~\ref{prop: spine_determines_symplectic surface}.
\end{proof}

\begin{remark} If the pointed mapping class $\hat\phi_{i,i+1}$ in Definition~\ref{shadowdiagram.def} is PA-quasipositive, then it represents a \textmaxsl unlink in the tight contact structure on $\#^{k_i} S^1\times S^2$.  The examples considered in this paper all arise from PA-quasipositive mapping classes. 
\end{remark}

The arcs $\tau_i'$, also called {\it shadow arcs}, are the images of the trivial tangles $\tau_i$ in a multisection of $S$ under an isotopy to the bridge surface $\Sigma.$

\begin{definition}[PMC]\label{def:PMC}
    Let $\hat\phi:(F,P)\rightarrow (F,P)$ be a pointed mapping class representing a transverse link in an open book for a tight contact structure on a 3-manifold $M$. A \emph{multisection of the pointed mapping class $\hat\phi$} (bisection if $n=2$) is a factorization $\hat\phi=\hat\phi_{n-1,n}\circ,\dots,\circ\hat\phi_{1,2}$ such that each pointed mapping class $\hat\phi_{i,i+1}$ represents a \textmaxsl unlink in the tight contact structure on $\#^{k_i}S^1\times S^2$ for some $k_i\in \mathbb{N}$.
\end{definition}

\begin{proposition}\label{prop:PMC}
    Let $S$ be a symplectic surface in bridge position with respect to a multisection-with-divides structure on $(W,\omega)$. Let $\hat\phi:(F,P)\rightarrow (F,P)$ be a pointed mapping class representing the transverse link $\partial S$ in an open book for the tight contact structure $\partial W$. Then the pointed mapping class $\hat\phi$ admits a multisection. Conversely, a multisection of a pointed mapping class representing $\partial S$ determines a bridge multisected symplectic surface in the Weinstein domain corresponding to the underlying multisection diagram with divides.
\end{proposition}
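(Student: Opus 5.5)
The plan is to pass through the shadow diagram with divides of Proposition~\ref{prop:SDD}, reading pointed monodromies off consecutive pairs of braided tangles, exactly as braids in a half-open book are encoded by shadows.

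\textbf{Forward direction.} Let $S$ be in bridge position, with spine $(H_1,\dots,H_n;\tau_1,\dots,\tau_n)$ as in Definition~\ref{def: spine_surface_with_divides}. Fix the bridge surface $\Sigma=F_0\cup_\Gamma\overline{F}_{1/2}$ with its $2b$ marked points, and identify every half-open book page with a fixed abstract page $F$.
\begin{enumerate}
\item For each $i$, the closed braid $\tau_i\cup\overline{\tau}_{i+1}$ lives in the open book on $H_i\cup\overline{H}_{i+1}\cong\#^{k_i}S^1\times S^2$. Its pointed monodromy $\hat\phi_{i,i+1}$ is, by Definition~\ref{bridge_multisection_divides.def}, the pointed monodromy of the $i^{\text{th}}$ sector.
\item Since each union $\tau_i\cup\overline{\tau}_{i+1}$ is a braided \textmaxsl unlink in the standard tight structure, each $\hat\phi_{i,i+1}$ represents such an unlink, as Definition~\ref{def:PMC} requires.
\item Each $\hat\phi_{i,i+1}$ is computed as the pointed mapping class carrying the shadow arcs $\tau_i'$ on $F_{1/2}$ (or $F_0$) to $\tau_{i+1}'$, relative to the half-open book parametrizations.
\item The outer boundary $H_1\cup\overline{H}_{n+1}$ carries the open book $(F,\phi)$ with $\phi=\phi_{n-1,n}\circ\cdots\circ\phi_{1,2}$. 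Pointed monodromies compose in the same way, because the parametrization of $H_i$ used in sector $i-1$ and in sector $i$ agree (both induce the same dividing set and page structure on $\Sigma$).
\item Hence $\hat\phi_{n-1,n}\circ\cdots\circ\hat\phi_{1,2}$ is a pointed monodromy for $\tau_1\cup\overline{\tau}_{n+1}=\partial S$. It therefore agrees with the given $\hat\phi$ up to conjugation in the pointed mapping class group of $(F,P)$, i.e. up to the choice of identification.
\item Absorbing that conjugation into $\hat\phi_{1,2}$ and $\hat\phi_{n-1,n}$ yields the required factorization.
\end{enumerate}

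\textbf{Converse.} Given a factorization $\hat\phi=\hat\phi_{n-1,n}\circ\cdots\circ\hat\phi_{1,2}$:
\begin{enumerate}
\item Take $\Sigma=F\cup_{\partial F}\overline F$ with dividing set $\partial F$, and choose an arc system on $F$. This determines the cut systems $\alpha_i$ via the underlying unpointed monodromies $\phi_{i,i+1}$, which induce tight structures on $\#^{k_i}S^1\times S^2$. This is the multisection diagram with divides of \cite{IS-divides}.
\item Fix shadows $\tau_1'$ for a trivial braid, and define $\tau_{i+1}'=\hat\phi_{i,i+1}(\tau_i')$.
\item Each consecutive pair is then related by a pointed mapping class representing a \textmaxsl unlink, so the arc systems form a shadow diagram with divides (Definition~\ref{shadowdiagram.def}).
\item Proposition~\ref{prop:SDD}, together with Proposition~\ref{prop: spine_determines_symplectic surface}, produces the bridge multisected symplectic surface, and its boundary braid has pointed monodromy $\hat\phi$ by construction.
\end{enumerate}

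\textbf{Main obstacle.} The main difficulty is the bookkeeping of identifications. One must check that gluing the half-open books of consecutive $H_i$ along $\Sigma$ makes the pointed monodromies compose to the monodromy of the outer splitting, rather than to something differing by a twist along $\Gamma$ or a mismatch at the marked points. I would handle this by fixing once and for all a product collar of $\Sigma$ in which every $H_i$ restricts to the same pages $F_0,F_{1/2}$, which is allowed by the common-dividing-set condition. I would then argue as in the unpointed case of \cite{IS-divides}, tracking the $2b$ points through the same collar.
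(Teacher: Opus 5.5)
Your argument follows the paper's route. The paper proves Proposition~\ref{prop:PMC} by noting that pointed-monodromy factorizations correspond to shadow diagrams with divides and then repeating the proof of Proposition~\ref{prop:SDD}. Your reading of $\hat\phi_{i,i+1}$ off consecutive tangles, the telescoping of the half-open book identifications through the common pages of $\Sigma$, and the converse via Propositions~\ref{prop:SDD} and~\ref{prop: spine_determines_symplectic surface} are a more detailed version of exactly that argument.

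One step fails as written: step 6 of the forward direction. Suppose $\hat\phi=\psi\circ(\hat\phi_{n-1,n}\circ\cdots\circ\hat\phi_{1,2})\circ\psi^{-1}$. Replacing only the outer factors by $\psi\circ\hat\phi_{n-1,n}$ and $\hat\phi_{1,2}\circ\psi^{-1}$ does give a factorization of $\hat\phi$, but the new factors need not represent \textmaxsl unlinks. If $\psi$ is nontrivial after forgetting $P$, the underlying open book becomes $(F,\phi_{1,2}\circ\psi^{-1})$, which in general is not an open book for tight $\#^{k_1}S^1\times S^2$. Even when $\psi$ lies in the kernel of the forgetful map, the braid changes. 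For example, take a disk page with $\hat\phi_{1,2}=\mathrm{id}$ (a $2$-component \textmaxsl unlink) and $\psi=H_\alpha$; then the new first factor $H_\alpha^{-1}$ closes to an unknot with self-linking $-3$.

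The fix is to conjugate every factor, $\hat\phi=(\psi\hat\phi_{n-1,n}\psi^{-1})\circ\cdots\circ(\psi\hat\phi_{1,2}\psi^{-1})$. This amounts to re-identifying the page shared by all the $H_i$ with $F$ via $\psi$. Each conjugate is the pointed monodromy of the same braid in an equivalent abstract open book, so it still represents a \textmaxsl unlink.

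Note also that the conjugacy claim in step 5 presupposes that $\hat\phi$ encodes the same closed braid $\tau_1\cup\overline{\tau}_{n+1}$ in the outer open book. That is the reading the paper tacitly uses. If $\hat\phi$ only represented $\partial S$ up to transverse isotopy (e.g.\ after a Markov stabilization), you would first need boundary stabilizations and positive braided perturbations as in Section~\ref{sec:stabilizations}.
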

\begin{proof}
    Under these assumptions, pointed mapping class factorizations correspond to shadow diagrams with divides. Hence, the proof is analogous to that of Proposition~\ref{prop:SDD}.
\end{proof}

\begin{example}\label{2_compo_unlink_one_band_example}
 Consider the 2-component transverse unlink $(L,\nu)$ with a single positive band, such that the components link $\Lambda$ once positively, shown in Figure~\ref{fig:bandedinkwdiagram_1}. We now show how to describe this surface by a shadow diagram with divides (Fig.~\ref{fig:shadowdiagram}) and by a bridge bisection of a PA-quasipositive mapping class (Fig.~\ref{fig:pointedmonodromy}). 
 
 We use the construction of Avdek \cite{avdek2013contact} to draw a page $F$ of the open book $(\Gamma,\pi)$ in $(S^3,\xi_{std})$ whose core is $\Lambda$, as shown in Figure~\ref{fig:braidedbandedinkwdiagram}.  
 We first use a transverse isotopy to put $(L,\nu)$ in banded bridge position with respect to $(\Gamma,\pi)$.  
 This is possible by Proposition~\ref{prop:bandedunlink_implies_bandedbridgeposition}.  
 We certify that banded bridge position has been achieved in the front diagram as follows:
 \begin{enumerate}
 \item The isotopy of $L$ is achieved through transverse front diagram moves in the complement of $\Lambda$, such at all times the band remains positive.
 \item Each (transverse) intersection of $L$ with each page $F_t$ has the same sign.  In this example, we can see this in the diagram because the link lies in a tubular neighborhood of the binding.
 \end{enumerate}
The banded unlink diagram in Figure~\ref{fig:bandedinkwdiagram_1} is shown in braided position with respect to $F$ in Figure~\ref{fig:braidedbandedinkwdiagram}.

\begin{figure}[h]
    \centering
    \includegraphics[width=0.5\linewidth]{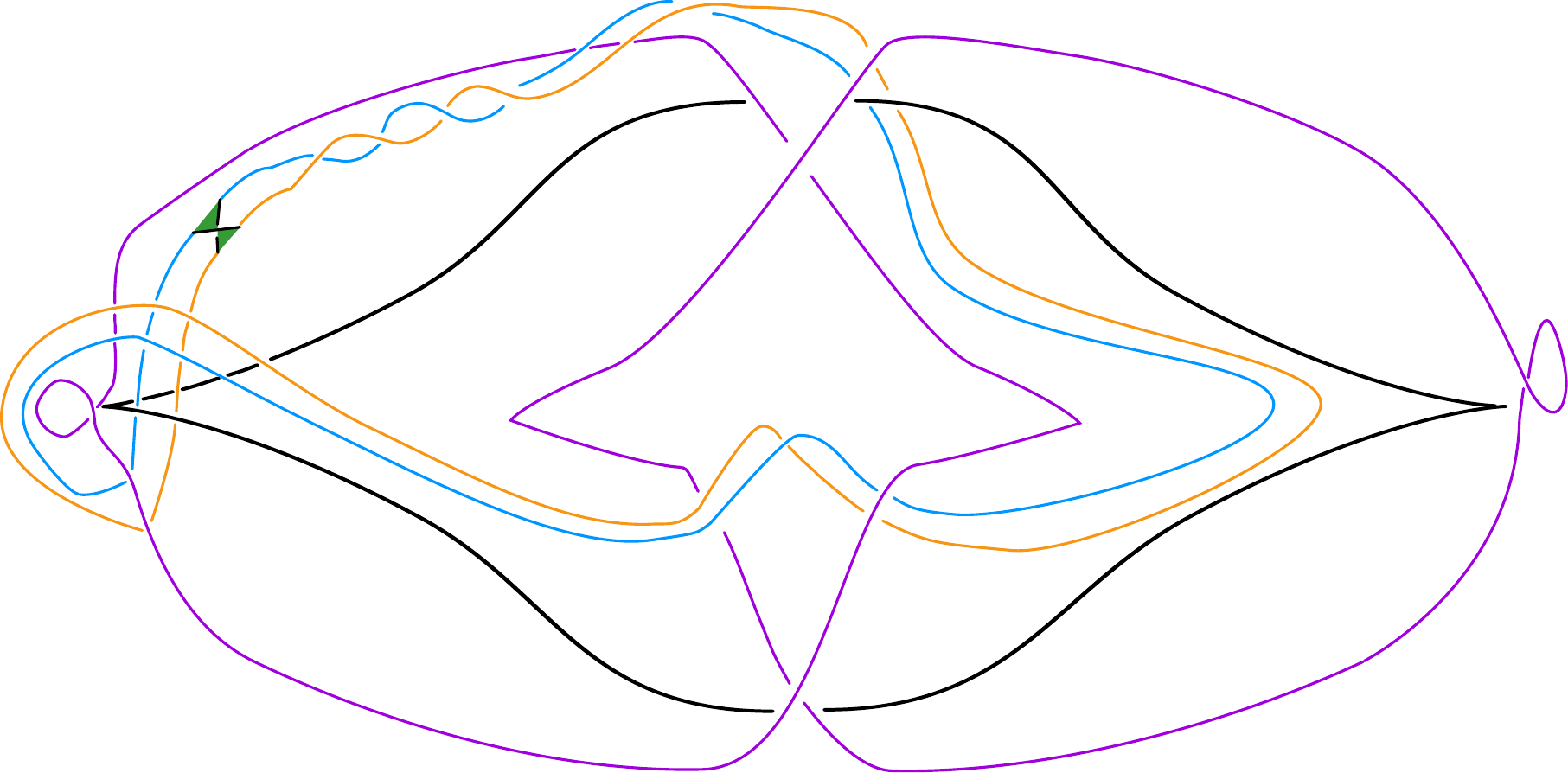}    \caption{A braided, banded unlink diagram for a surface in the cotangent disk bundle to $S^2$.}
    \label{fig:braidedbandedinkwdiagram}
\end{figure}

We then let $H_2=F\times[-\epsilon,\epsilon]$, which contains $\Lambda$ in its core, and let $\overline{H}_1=S^3\setminus H_1$. Note that $H_3=H_2[\Lambda]$. We draw $H_2$ from $F$, as in the local models in \cite[Fig.\ 3]{IS-divides}.   We then view the braided banded unlink $(L,\nu)$ in bridge position with respect to the double $\Sigma$ of $F$.  See Figure~\ref{fig:bandedinkwdiagram_2}.

\begin{figure}[h]
    \centering
    \includegraphics[width=.6\linewidth]{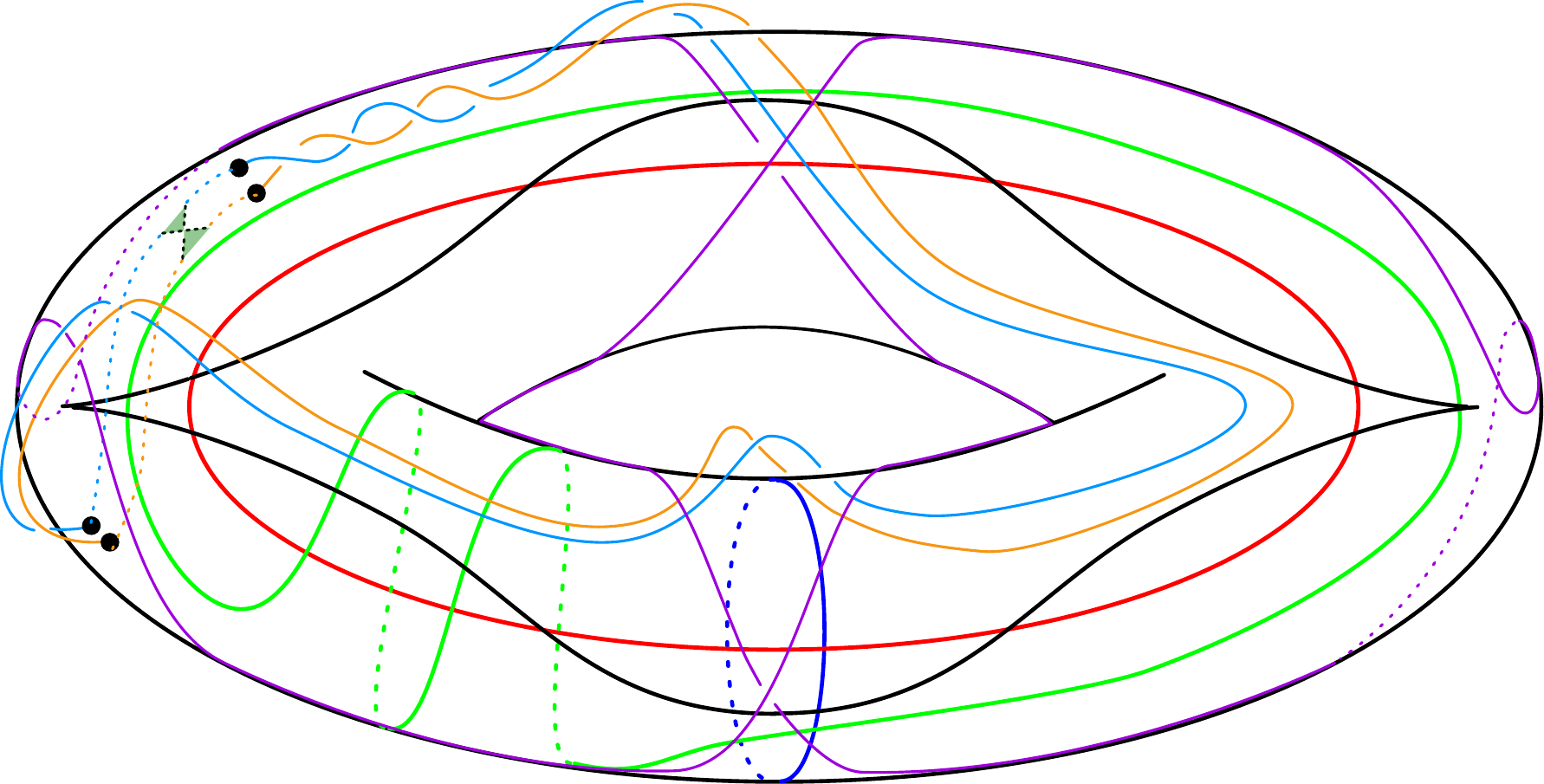}
    \caption{A bridge position banded unlink for the surface in Figure~\ref{fig:bandedinkwdiagram_1}.}
    \label{fig:bandedinkwdiagram_2}
\end{figure}

Finally, we draw the corresponding shadows.  To obtain tangle in $H_3$, recall that we resolve the band on the tangle in $H_2$. The shadows for the tangles in each of the handlebodies are drawn in the bisection diagram in Figure~\ref{fig:shadowdiagram}. 

\begin{figure}[h]
    \centering
    \includegraphics[width=.7\linewidth]{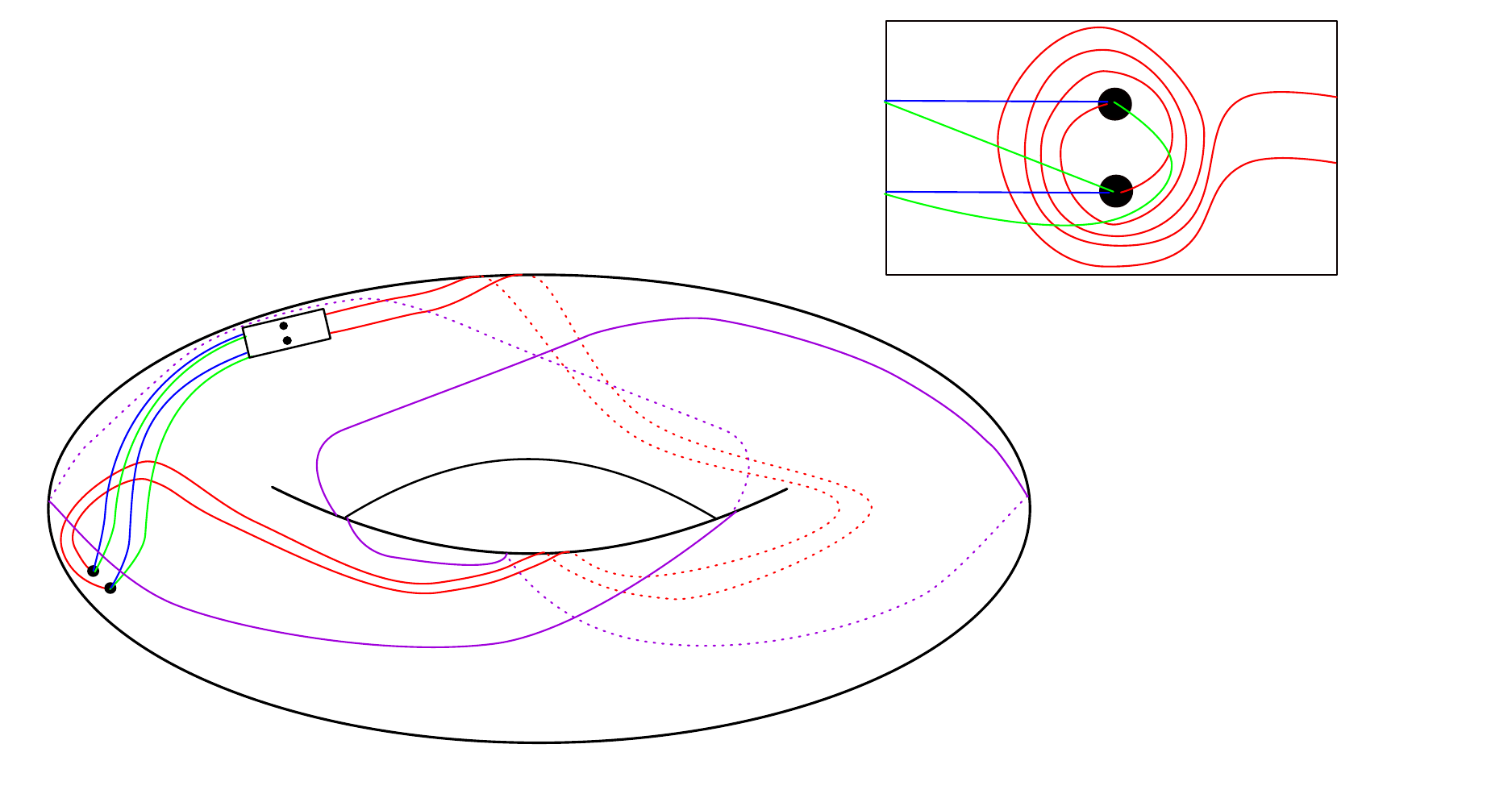}
    \caption{A shadow diagram with divides for the surface in Figure~\ref{fig:bandedinkwdiagram_1}. }
    \label{fig:shadowdiagram}
\end{figure}

The same example can be written in terms of a PA-quasipositive pointed open book. See Figure~\ref{fig:pointedmonodromy}. Let $F$ denote the annulus, with core circle $\gamma$.  Our original 2-component unlink in $S^3$ (in the complement of $\Lambda$) lies in $H_1\cup \overline{H}_2$, which is an open book with page $F$ and monodromy $D_\gamma$.  The surgered link lies in $H_2\cup \overline{H}_3$, which is also an open book with page $F$ and monodromy $D_\gamma$.  Indeed, in Figure~\ref{fig:pointedmonodromy}, the red arcs are carried to blue arcs by the pointed monodromy $\hat\phi_{12}=H_\alpha^3\circ D_{\gamma'}^{-1}\circ D_\gamma^2$, and the blue arcs  are carried to the  green arcs by the pointed monodromy $\hat\phi_{23}=H_\alpha \circ D_{\gamma'}.$

\begin{figure}[h]
    \centering
\includegraphics[width=.7\linewidth]{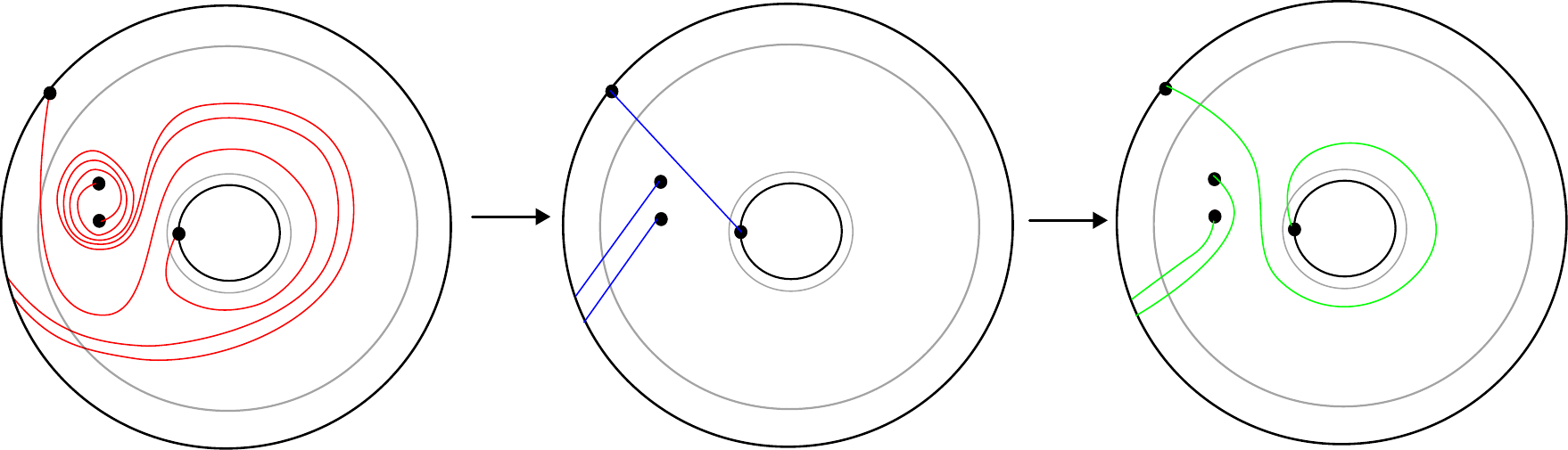}
    \caption{The pointed monodromy for the surface in Figure~\ref{fig:bandedinkwdiagram_1}. }
    \label{fig:pointedmonodromy}
\end{figure}
\end{example}

\section{Proofs of main results}\label{sec:proofs}

In this section, we prove Theorem~\ref{thm: main} and Theorem~\ref{thm: intro_characterization}. 

\subsection{Bridge bisections with divides from ascending surfaces}
We restate Theorem~\ref{thm: main} as Theorem~\ref{thm:existence_bisections}, about the existence of bisections with divides for ascending surfaces. To do that, we show that transverse banded unlinks in banded bridge position induce bridge bisections with divides (Prop.~\ref{prop:banded_gives_bisection}), and every transverse banded unlink can be transversely isotoped into banded bridge position (Prop.~\ref{prop:bandedunlink_implies_bandedbridgeposition}). 

Setting the stage for the propositions, consider a bisection with divides $W=W_1\cup W_2$ as in Remark~\ref{rem:KWtobisection} and an ascending surface $S(U,\nu)$ inside of $W$. Suppose that $(U,\nu)$ is in banded bridge position, suppose that the bands in $\nu$ are inside the handlebody $H_2$, near the boundary of $H_2$. Let $\tau_i=U\cap H_i$ be the braided arcs in the half-open books $H_i$ for $i=1,2$. Recall from the discussion before Lemma~\ref{lem:spine_from_bridgeposition}, that the surgered tangle $\tau_2[\nu]$ can be regarded as a subset of $H_3$. The following lemma shows that $\tau_2\cup \tau_2[\nu]$ is a \textmaxsl unlink. 

\begin{lemma}\label{lem: bridge_position_bands}
Let $(L,\nu)$ be a transverse banded link in $(M,\xi)$ in 
banded bridge position with open book $(\Gamma, \pi)$ and associated contact Heegaard splitting $H_1\cup \overline H_2$. Let $L=\tau_1 \cup \overline{\tau_2}$ be the bridge splitting of $L$ where $\tau_i=L\cap H_i$, $i=1,2$. Suppose the bands $\nu$ lie on the same side as the tangle $\tau_2$. Then, for any Legendrian link $\Lambda$ contained in the Legendrian core of $H_2$, $\tau_2 \cup \overline{\tau_2[\nu]}$ is a transverse unlink with maximal self linking (Def.~\ref{def:unlink}) in $H_2\cup \overline{H_2[\Lambda]}$. 
\end{lemma}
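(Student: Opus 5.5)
We reduce the claim to a local computation in $H_2\cup\overline{H_2[\Lambda]}$. First, $\Lambda$ sits in the Legendrian core of $H_2$. The bands of $\nu$ and the tangle $\tau_2$ lie near $\partial H_2$, so away from a neighborhood of $\Lambda$. Hence the surgery on $\Lambda$ does not touch $\tau_2$ or the bands. We may then regard $\tau_2$ and $\tau_2[\nu]$ as braids in two half-open books that share the page $F_0$. One sits inside $H_2$, the other inside $H_3=H_2[\Lambda]$. The union $\tau_2\cup\overline{\tau_2[\nu]}$ is then a closed braid in the open book on $H_2\cup\overline{H_3}$. By Section~\ref{rem:KWtobisection} this open book has page $F$ and supports the tight structure on $\#^{k_2}S^1\times S^2$.

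Next we identify the pointed monodromy. Take shadows of $\tau_2$ on $\Sigma$ as in \ref{item:bb4}, disjoint from the band cores. The tangle $\tau_2$ glued to its own mirror, with no bands, is a trivial braid with identity pointed monodromy (relative to the underlying monodromy of $H_2\cup\overline{H_3}$). It is therefore a collection of $b$ unknots, but their self-linking is not yet maximal. Each $1/2$-framed band with core $\alpha$ lying in $F_0$ (by \ref{item:bb2}, \ref{item:bb3}) changes the pointed monodromy by a positive half-twist $H_\alpha$. The $1/2$-framed band surgery acts by a positive half-twist at its arc, as in Example~\ref{eg: framedband} and the discussion after Definition~\ref{def:braid_stab}. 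Disjointness of the cores (\ref{item:bb3}) and of the cores from the shadows (\ref{item:bb4}) lets these half-twists be inserted independently. So the pointed monodromy of $\tau_2\cup\overline{\tau_2[\nu]}$ is the underlying monodromy composed with $\prod_{v\in\nu}H_{\alpha_v}$.

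Finally we use acyclicity. The cores form a forest in $F_0$ with vertices at punctures. We contract the forest one leaf-edge at a time. Each contraction is a positive Markov destabilization (Definition~\ref{def:braid_stab}), applied in reverse, that merges two strands joined by a half-twist along an arc. The arc is disjoint from everything else, so it can be slid to the binding without crossing other shadows. After all contractions we are left with a trivial braid of $c$ strands, one for each tree component. Its components are meridians of the binding, which are standard transverse unknots with $sl=-1$ in a Darboux ball. Markov stabilization preserves transverse isotopy type by Pavelescu, so the original union is transverse isotopic to a standard unlink, i.e.\ it is max-$sl$ by Definition~\ref{def:unlink}. Lemma~\ref{lem:unique_unlink} then confirms the type is determined by the component count.

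The main obstacle is the last step. We must justify that each half-twist arc in the forest can really be isotoped, rel the other strands, to an arc running to the binding, so that it becomes a genuine Markov stabilization. Here we would use \ref{item:bb4} carefully and argue with a leaf vertex: the shadow through a leaf puncture meets $\Gamma$ once and avoids all cores. So the leaf strand can be pushed along its shadow to near the binding, and the stabilization arc can be taken to run along that shadow.
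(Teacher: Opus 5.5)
Your proposal is correct and is essentially the paper's argument: the paper also treats the bandless double $\tau_2\cup\overline{\tau_2}$ as a union of meridians of the binding bounding disks $E_t$ that meet the binding once and, by (BB-4), miss the bands. It then removes a leaf band $v_1$ of the forest through the positive destabilization determined by $E_t\cup v_1$, and iterates until $\nu$ is empty, which is exactly how you resolve your ``main obstacle.''

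One slip: your aside that the bandless double is ``not yet'' of maximal self-linking is false. Its components are already $sl=-1$ meridians, as you yourself use at the end. Nothing depends on this remark, and the pointed-monodromy computation in your second paragraph is likewise not needed.
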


\begin{proof}
Let $H_3=H_2[\Lambda]$ be the surgered handlebody where the surgery coefficient of each component of $\Lambda$ is $tb(\Lambda)-1$. We will show that $\tau_2 \cup \overline{\tau_2[\nu]}\subset H_2\cup \overline{H_3}$ is the result of $|\nu|$ positive stabilizations of an unlink whose components are meridians of the binding. 

Push $\nu$ into a small neighborhood of the page $F_{1/2}$. This way, away from $F_{1/2}$, the braids $\tau_2$ and $\overline{\tau_2[\nu]}$ are mirror images of one another. In particular, $\tau_2 \cup \overline{\tau_2}$ is a transverse unlink with a maximal self-linking number and each component $t$ of $\tau_2 \cup \overline{\tau_2}$ is a 1-braid bounding a disk $E_t$ that intersects the binding $\partial F_0$ exactly once. Observe that, by \ref{item:bb4}, we can choose the disks $E_t$ to have interiors disjoint from the bands in $\nu$. In what follows, we will show that each component of $\nu$ corresponds to a positive destabilization of $\tau_2 \cup \overline{\tau_2}[\nu]$. 

Now, from \ref{item:bb2} and \ref{item:bb3}, the cores of $\nu$ can be projected onto $F_{1/2}$, and their union forms an embedded forest with vertex set equal to the punctures $\tau_2\cap F_{1/2}$. Let $v_1\in \nu$ be a band whose core is a leaf edge of such a tree, and denote by $p$ a puncture that is one of its leaf endpoints. 
Let $t$ be the component of $\tau_2 \cup \overline{\tau_2}$ containing $p$ and $E_t$ its corresponding disk. Notice that $v_1$ is the only band of $\nu$ intersecting $E_t$ in its boundary. In particular, $E_t\cup v_1$ induces a destabilization for $\tau'_2\cup \overline{\tau'_2[\nu]}$ as in Figure \ref{fig:stab}. The $1/2$-framing of $v_1$ implies that this is a positive destabilization. Notice that after performing the destabilization, the remaining bands in $\nu-\{v_1\}$ still satisfy \ref{item:bb2}-\ref{item:bb4} for the braid $\left(\tau_2\cup \overline{\tau_2}\right)-t$. 
We can repeat this process until $\nu$ is empty. The result follows. 
\end{proof}

\begin{figure}[h]
\centering
\includegraphics[width=8cm]{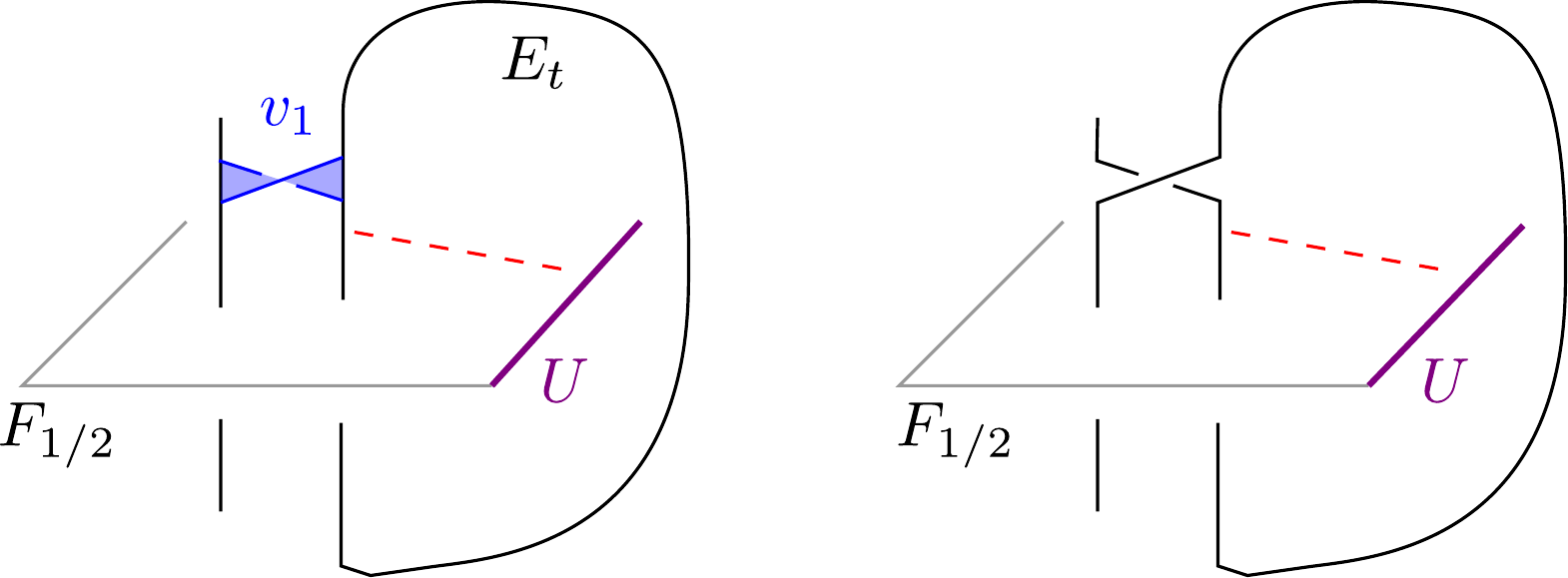}
\caption{Finding a destabilization disk for $L[\nu]$. (left) The disk $E_t$ bounded by a 1-braid $t$, intersecting only one braided band $v_1$. The dotted arc is part of the shadow for $\tau_2$ inside $F_{1/2}$. (right) The same neighborhood drawn after band surgery; the link is now $\tau_2\cup \overline{\tau_2[\nu]}$.}
\label{fig:stab}
\end{figure}

\begin{proposition}\label{prop:banded_gives_bisection}
Let $W$ be a Weinstein domain equipped with a bisection with divides $W=W_1\cup W_2$ induced from a Kirby--Weinstein diagram $(\Lambda,k)$. Suppose that $S(U,\nu)$ is an ascending surface represented by a transverse banded unlink $(U,\nu)$. If $(U,\nu)$ is in banded bridge position with respect to an open book for $(\partial W_1,\xi)$, then $(U,\nu)$ induces a bridge multisection with divides for $S(U,\nu)$. 
\end{proposition}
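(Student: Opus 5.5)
The plan is to produce the spine $(H_1,H_2,H_3;\tau_1,\tau_2,\tau_2[\nu])$ of Lemma~\ref{lem:spine_from_bridgeposition}, with $H_3=H_2[\Lambda]$, and check that it satisfies Definition~\ref{def: spine_surface_with_divides}. Then I will show that the bridge bisection it determines via Proposition~\ref{prop: spine_determines_symplectic surface} is $S(U,\nu)$ up to symplectic isotopy.

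\textbf{Step 1: the bisection of $W$.} Use the bisection $\Tcal(\Lambda,k,\Sigma)$ from Section~\ref{rem:KWtobisection}. By \cite[Thm.~3.1]{IS-divides}, $H_1\cup\overline H_2=\partial W_1$ and $H_2\cup\overline H_3=\partial W_2$ are standard tight $\#^{k_i}S^1\times S^2$ and $H_1\cup\overline H_3=\partial W$. All three share the dividing set $\Gamma$ on $\Sigma$.

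\textbf{Step 2: the tangles.} By \ref{item:bb1}, $\tau_1,\tau_2$ are braids in the half-open books of $H_1,H_2$, and $U=\tau_1\cup\overline\tau_2$ is a \textmaxsl unlink in $\partial W_1$ by hypothesis. By \ref{item:bb2}–\ref{item:bb3}, push the bands $\nu$ into a collar of $F_{1/2}\subset\partial H_2$, away from the Legendrian core containing $\Lambda$. Then $\tau_2[\nu]$ is a braid in the half-open book of $H_3$: surgery along $\Lambda$ happens in the core, and the $1/2$-framed braided bands keep the arcs transverse to pages. Lemma~\ref{lem: bridge_position_bands} shows $\tau_2\cup\overline{\tau_2[\nu]}$ is a \textmaxsl unlink in $H_2\cup\overline H_3$. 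Finally, $\tau_1\cup\overline{\tau_2[\nu]}=U[\nu]=\partial S(U,\nu)$ lies in $\partial W$. Together these verify every condition of Definition~\ref{def: spine_surface_with_divides}.

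\textbf{Step 3: identification with $S(U,\nu)$.} In $W_1=\natural^k S^1\times D^3$ the surface $S(U,\nu)$ restricts to the disks $D$ bounded by $U$. These form a trivial symplectic disk system, unique by Proposition~\ref{uniquedisks.prop}. The remaining part, in $W_2$ (a collar of $H_2$ plus the 2-handles), consists of three pieces:
\begin{itemize}
\item the product $\tau_2\times I$;
\item the band cobordisms $S_\nu$ of Remark~\ref{bandsurface.rem}, which occur before the 2-handles by Proposition~\ref{prop: weinstein_homotopy};
\item the product on $U[\nu]$ through the 2-handle cobordism.
\end{itemize}
Its boundary is $\tau_2\cup\overline{\tau_2[\nu]}$.

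I then need this piece to be a trivial symplectic disk system in $W_2$. The destabilization argument of Lemma~\ref{lem: bridge_position_bands} presents each band as cancelling a minimum. Concretely, the disk $E_t$ together with the leaf band $v_1$ gives a symplectic isotopy merging a birth of $\tau_2\cup\overline{\tau_2}$ with $v_1$. Iterating over the leaves of the forest from \ref{item:bb3} leaves only births, that is, a disk system. Its boundary is a \textmaxsl unlink, so Proposition~\ref{uniquedisks.prop} and Remark~\ref{rem:disk_isotopy} identify it with the unique filling. Gluing along $\tau_2$ then recovers $S(U,\nu)$.

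\textbf{Main obstacle.} The hard part is Step 3: showing symplectically, not just smoothly, that $(\tau_2\times I)\cup S_\nu$ capped through the handles is a trivial disk system. My first attempt would reuse Hayden's machinery, namely \cite[Thm.~4.3, Thm.~1.3, Rem.~3.6]{hayden21}, to realize the band/birth cancellation as an isotopy through ascending surfaces. I would then invoke uniqueness of disk fillings to conclude.
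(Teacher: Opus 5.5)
Your proposal is correct and follows essentially the paper's argument. In both, $S(U,\nu)\cap W_1$ is the trivial disk system bounded by $U$, and $S(U,\nu)\cap W_2$ is the band cobordism from $\tau_2$ to $\tau_2[\nu]$; that cobordism is a union of disks because the band cores form a forest by \ref{item:bb3}, and Lemma~\ref{lem: bridge_position_bands} makes its boundary a max-$sl$ unlink in $H_2\cup\overline{H_3}$.

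The step you flag as the main obstacle is not actually needed. Being a union of disks is purely topological: strips over $\tau_2$ joined by bands along a forest. Symplecticity comes from the construction in Proposition~\ref{realizing-symplectic-surface.prop}. Proposition~\ref{uniquedisks.prop} then identifies these symplectic disks in $W_2$, whose boundary is a max-$sl$ unlink, with the trivial system, so no symplectic band/birth cancellation via Hayden's machinery is required.
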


\begin{proof}
From Section~\ref{rem:KWtobisection}, the bisection with divides $W=W_1\cup W_2$ satisfies $\partial W_1 = H_1 \cup \overline{H_2}$ and $\partial W_2 = H_2 \cup \overline{H_3}$. By Proposition~\ref{realizing-symplectic-surface.prop}, the surface $S(U,\nu)$ is the union of trivial symplectic disks in $W_1$ with boundary $U$, and a cobordism $S_\nu$ between $U$ and $U[\nu]$. Since $U\cap H_2=\tau_2$ and $\nu \subset H_2$, $S(U,\nu)\cap W_2$ is the cobordism between the tangles $\tau_2$ and $\tau_3=\tau_2[\nu]$ induced by $\nu$. Thus, $S(U,\nu)\cap W_2$ is a boundary parallel surface in $W_2$. Moreover, the acyclic condition in Definition~\ref{def:banded bridge position} 
ensures that $S(U,\nu)\cap W_2$ is a collection of disks in $W_2$. 
Now, by Lemma~\ref{lem: bridge_position_bands} 
and since $(U,\nu)$ are in banded bridge position, the braids $\tau_1\cup \overline{\tau_2}$ and $\tau_2\cup \overline{\tau_2[\nu]}$ are \textmaxsl unlinks in $H_1\cup \overline{H_2}$ and $H_2\cup \overline{H_3}$, respectively. In conclusion, $S\cap W_1$ and $S\cap W_2$ are trivial symplectic disk systems as in Definition~\ref{bridge_multisection_divides.def}, and they form a bridge bisection with divides for $S(U,\nu)$.
\end{proof}

\begin{proposition}\label{prop:bandedunlink_implies_bandedbridgeposition}
Let $(U,\nu)$ be a transverse banded unlink in $(M,\xi)$ with open book $(\Gamma, \pi)$. There exists a transverse isotopy of $(U,\nu)$ such that $(U,\nu)$ is in 
banded bridge position with respect to $(\Gamma,\pi).$
\end{proposition}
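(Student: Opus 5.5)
The plan mirrors the smooth procedure of \cite[Procedure 5.8]{aranda25}, transported to the transverse setting, with Pavelescu's braiding theorem (Theorem~\ref{thm:pavelescu_braids}) and Markov stabilizations (Definition~\ref{def:braid_stab}) as the main tools. First, shrink each band $v\in\nu$ into a thin neighborhood of its core arc, so that $(U,\nu)$ is transversely isotopic to $U$ together with a collection of short core arcs, each decorated by a local positive half-twist as in Figure~\ref{fig:positive}. Then apply Theorem~\ref{thm:pavelescu_braids} to the transverse link $U$, carrying the band cores along by the isotopy; this gives \ref{item:bb1}. Pavelescu's isotopy is built from local moves, and we may keep it supported away from small balls around the band attaching regions. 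Working in a Darboux chart near each band, we then check that a positive band attached to braided strands can be put in the model of Example~\ref{eg: framedband}. The band's contribution is a positive crossing in the front projection, which in a braid neighborhood is a positive half-twist $\sigma_i$ between two adjacent strands. So each band becomes a $1/2$-framed braided band determined by some arc, giving \ref{item:bb2}. At this point the core of each band is an arc lying in some page $F_{\theta_v}$, possibly with different $\theta_v$ for different bands.

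Next, slide every band along the braid, via the flow transverse to the pages, into a neighborhood of the single page $F_0$. Sliding a band core through a portion of the braid changes it by the corresponding pointed mapping class (compare the half-twist description in Example~\ref{Baykur_VHM_examples}). This is still a transverse isotopy of the banded link, and it preserves the positivity of the bands. Stacking the bands at slightly different heights near $F_0$ and projecting to $F_0$ gives arcs whose interiors may intersect, and whose union may contain cycles.

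The main obstacle is removing these intersections and cycles, achieving \ref{item:bb3}, while also ensuring that the shadows of $\tau_2$ miss the cores, as required by \ref{item:bb4}. For this I would follow the strategy of Example~\ref{Baykur_VHM_examples}. Pick an intersection point, or a band $v$ whose core meets others, and choose an arc $\alpha$ in the page from an endpoint of $v$ to the binding, disjoint from the remaining cores. Perform a positive Markov stabilization along the push-off of $\alpha\cup\mathrm{core}(v)$. This is a transverse isotopy of $U$, so the self-linking is unchanged and $U$ remains a max-$sl$ unlink, and it introduces a new puncture near the binding. Sliding $v$ over the new crossing applies a half-twist to its core, which reroutes the core through the new puncture. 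Each such move strictly reduces the number of interior intersections and cycles, so induction terminates with an acyclic, interior-disjoint 1-complex.

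For \ref{item:bb4}, I would stabilize once more, near the binding, for each band endpoint. This makes each strand of $\tau_2$ a short arc crossing $\Gamma$ once, with a shadow that runs directly to the binding in a collar of $\partial F_0$, and we keep the band cores outside this collar. A further stabilization may be needed if some puncture is not adjacent to the collar. Termination of the intersection-removal induction is the step that needs the most care: the stabilizing arc $\alpha$ may itself be forced to cross other cores. If so, I would first reduce the cores to a tree by an innermost argument, handling cycles before intersections, and only then run the induction.
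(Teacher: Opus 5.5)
\textbf{The gap is in the (BB-3) step, at the point you flag yourself.}

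Your rerouting move is sound. You stabilize along a push-off of $\alpha\cup\mathrm{core}(v)$ and slide $v$ through the new crossing, so its core becomes parallel to $\alpha$ and ends at a fresh puncture. This is essentially the second stabilization in the paper's argument. But the move presupposes an arc $\alpha$ from an endpoint of $v$ to the binding whose interior misses the other cores and also $\mathrm{core}(v)$ itself. Otherwise $\alpha\cup\mathrm{core}(v)$ is not embedded and there is nothing to push off.

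Such an arc need not exist. Take a single other core $e$ from $a$ to $b$. An arc $\mathrm{core}(v)$ in minimal position can cross $e$ three times, winding once around $\{p',b\}$ near one end and around $a$ before ending at $p$. Then $p',b$ lie in one component of $F_0\setminus(e\cup\mathrm{core}(v))$ and $p,a$ in another, and neither component meets $\partial F_0$. So neither band admits your move. Your fallback, to first reduce to a tree by an innermost argument, does not help: here the cores already form a forest as an abstract graph, the obstruction is the crossings, and removing crossings is what the move was supposed to do.

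The paper supplies two ingredients you are missing:
\begin{enumerate}
\item It adds bands one at a time to a collection $\omega$ that is already dual. Then $\widetilde{\omega}\cup s_2$ is a forest with pendant arcs to the binding, all of whose complementary regions reach $\partial F$, so only $\widetilde{v}$ can obstruct.
\item It first stabilizes along an arc $\beta$ from an endpoint of $\widetilde{v}$ to $\partial F$ that only has to miss $\widetilde{v}$; such an arc always exists. It places the new strand in the band page at a point $x\in\beta$ beyond the last intersection of $\beta$ with $s_2\cup\widetilde{\omega}$. Sliding $v$ through this crossing moves that endpoint to $x$, and the terminal subarc $\alpha\subset\beta$ from $x$ to the binding is now disjoint from $s_2\cup\widetilde{\omega}\cup\widetilde{v}$.
\end{enumerate}
Only then is your move available. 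Its output core is parallel to $\alpha$ and ends at a new puncture, so it adds a leaf and acyclicity is automatic. This costs at most two stabilizations per band and needs no intersection-count monotonicity.

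\textbf{Three smaller points.}
\begin{enumerate}
\item You never say where the new half-twist sits relative to the other bands. Your stabilizing arc runs parallel to the old $\mathrm{core}(v)$, so any other band slid through that crossing has its core dragged along it. The crossing must be inserted between $v$ and the remaining bands, with only $v$ slid through it, so that it ends up outside the band collar.
\item Your (BB-4) paragraph does not work as written: stabilizing near the binding does not make the existing strands of $\tau_2$ short. But no extra step is needed. Once the cores form a forest in a collar of $F_0$ in which $\tau_2$ is vertical, pick pairwise disjoint arcs from the punctures to $\partial F_0$ avoiding the forest. This is possible because each tree has a disk neighbourhood whose complement is connected and meets $\partial F_0$. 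Transporting these arcs through the braid $\tau_2$ gives bridge disks, hence shadows as required.
\item For (BB-2) the paper does not use a local model. It applies Hayden's braiding theorem \cite[Thm.~1.3]{hayden21} to the surface in $M$ cobounded by $U$ and $U[\nu]$, whose characteristic foliation is weakly gradient-like with only positive hyperbolic points (Remark~\ref{bandsurface.rem}). You assert that Pavelescu's isotopy can be kept away from the bands, and that a positive crossing in an arbitrary Darboux front becomes a braided half-twist. Both claims need an argument of that kind.
\end{enumerate}
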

\begin{proof}
\ref{item:bb1} and \ref{item:bb2} follow from \cite{pavelescu2012braiding} and \cite{hayden21}, respectively. 
Namely, we first transversely isotope $(U,\nu)$ to $(U_0,\nu)$ such that $U_0$ is a quasipositive
braid in $(\Gamma,\pi)$, and such that the bands $\nu$ are fixed during the isotopy, using \cite{pavelescu2012braiding} and \cite{hayden21}. By the definition of band surgery, $U$ and $U[\nu]$ cobound an embedded surface $S$ in $M$. The positivity condition on $\nu$ implies that the characteristic foliation of $S$ has only positive critical (hyperbolic) points. By Remark~\ref{bandsurface.rem}, the characteristic foliation on $S$ is weakly gradient-like with respect to some Morse function on $S$. By Theorem 1.3 of \cite{hayden21}, $U[\nu]$ is also transversely isotopic to a (quasipositive) braid $U_1$ in $(\Gamma,\pi)$. Examining the proof of Theorem 1.3, case ii.b., of  \cite{hayden21}, after stabilizing the braid $U_0$ with respect to $(\Gamma,\pi)$, each $v\in\nu$ is a (1/2)-framed braided band determined by an arc in a page of $(\Gamma,\pi)$ with distinct endpoints.

To show that any transverse banded unlink can be transversely isotoped into braided bridge position with respect to a given open book $(\Gamma,\pi)$, it remains to show that \ref{item:bb3} and \ref{item:bb4} can be achieved by an additional transverse isotopy. Recall from Definition~\ref{def:banded bridge position} that bands satisfying \ref{item:bb3} and \ref{item:bb4} are called dual bands. The following argument will show that if $\omega$ is a collection of dual bands for a braid $U$ and $v$ is a \mbox{(1/2)-framed} braided band $U$ disjoint from $\omega$, then, after at most two positive stabilizations, the bands $\omega\cup \{v\}$ can be made dual to $U$. The result will follow by induction on the number of bands in $\nu$. 

We adopt the notation in Definition~\ref{def:banded bridge position}: the strands of $U$ in each half-open book by $\tau_i=U\cap H_i$, and $\tau_2'\subset F_0\cup F_{1/2}$ is a set of shadows for $\tau_2$. Let $s_2\subset F_{1/4}$ be the arcs of $\tau_2'\cap F_0$ projected into $F_{1/4}$; $s_2$ is equal to the intersection of a set of bridge disks for $\tau_2$ with the page $F_{1/4}$.
Suppose that $\omega$ and $v$ lie in the interior of $F_{[0,1/4]}$ and $F_{[1/4,3/8]}$, respectively. Denote by $\widetilde{v}$ and $\widetilde{\omega}$ the projections of the cores of $v$ and $\omega$ into $F_{1/4}$. As the bands $v$ and $\omega$ are braided (Def.~\ref{def:banded bridge position}), $\widetilde{\omega}$ and $\widetilde{v}$ are unions of embedded arcs in $F_{1/4}$. As $\omega$ is dual to $U$, the arcs of $\widetilde{\omega}$ have pairwise disjoint interiors disjoint from $s_2$ and may intersect with $\widetilde{v}$.  

We first want find an arc $\alpha$ in $F_{1/4}$ connecting an endpoint of $\wt{v}$ with $\partial F_{1/4}$ such that $\interior(\alpha)\cap (s_2\cup \wt{\omega}\cup \wt{v} )$ is empty. As $\wt{v}$ is an arc in $F_{1/4}$, we can find an arc $\beta$ disjoint from $\wt{v}$ without ensuring the condition $\interior(\beta)\cap (s_2\cup \wt{\omega})=\emptyset$; see the left column of Figure \ref{fig:step1}. We use this arc to positively stabilize $U$ in a neighborhood of $F_{1/4}$ (Def.~\ref{def:braid_stab}).
As shown in the figure, we can choose the new intersection between our braid and $F_{1/4}$ to be any point in the interior of $\beta$. We choose this point so that the subarc of $\beta$ touching $\partial F_{1/4}$ is disjoint from $(s_2\cup\wt\omega)$. We denote such a subarc by $\alpha$. Observe that the band $v$ can be slid through the new braid so that the projections $(s_2\cup\wt{v})$ share an endpoint with $\alpha$ and have interior disjoint from $\alpha$. 

\begin{figure}[h]
\centering
\includegraphics[width=10cm]{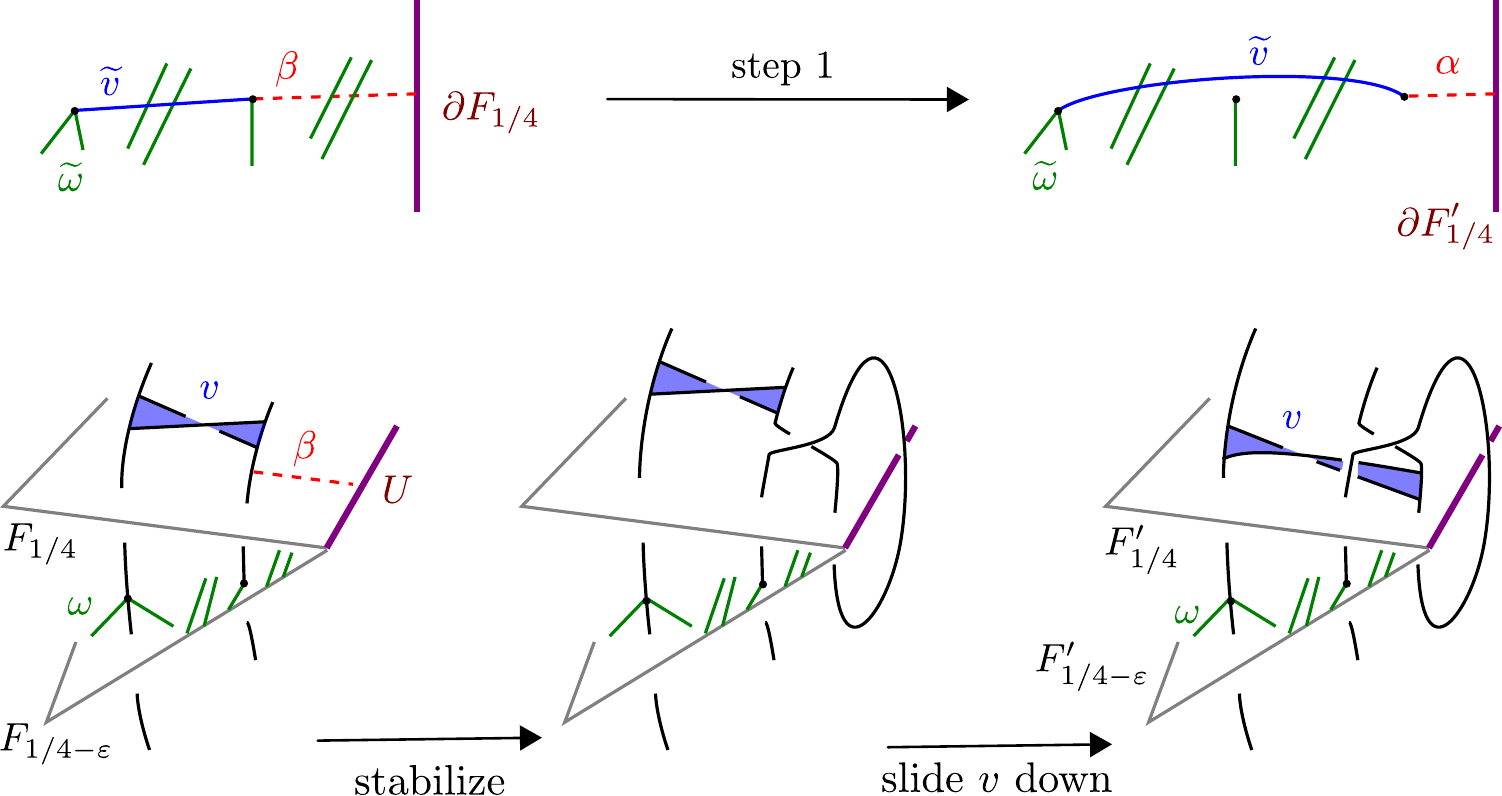}
\caption{In the proof of Proposition \ref{prop:bandedunlink_implies_bandedbridgeposition}, one stabilization suffices to find the arc $\alpha$ with interior disjoint from \mbox{$\wt v \cup s_2\cup\wt \omega$}. The top row depicts the arcs in $s_2$, the cores $\wt \omega$, and $\wt v$ in $F_{1/4}$, while the bottom row depicts a 3-dimensional view of the neighborhood of $F_{1/4}$.
}
\label{fig:step1}
\end{figure}

Let $\alpha\subset F_{1/4}$ be the arc as in the previous paragraph. We use $\alpha$ to guide a positive stabilization of $U$ in a neighborhood of $F_{1/4}$. As shown in Figure \ref{fig:step2}, we can slide $v$ through the new braid so the projection $\wt{v}$ has interior disjoint from $(s_2\cup\wt{\omega})$. 
One can see in the figure that $\wt{v}$ connects $\wt{\omega}$ to a new puncture arising from the stabilization (Def.~\ref{def:braid_stab}). Thus, the cores of $\omega$ and $v$ form a tree, and $\omega\cup\{v\}$ satisfies \ref{item:bb3}. The new tangle $\tau_2$ is the bottom half of the braid in the bottom rightmost panel of Figure~\ref{fig:step2}. We observe that the shadows of the new $\tau_2$ are equal to the old shadows $\tau_2'$, together with one new arc parallel to $\alpha$ (shown in red in the top right panel of Figure~\ref{fig:step2}). In particular, the cores of $\wt{\omega}$ and $\wt{v}$ have disjoint interiors with such shadows; and \ref{item:bb4} is satisfied for $\omega\cup\{v\}$. This finishes the proof.
\end{proof}

\begin{figure}[h]
\centering
\includegraphics[width=10cm]{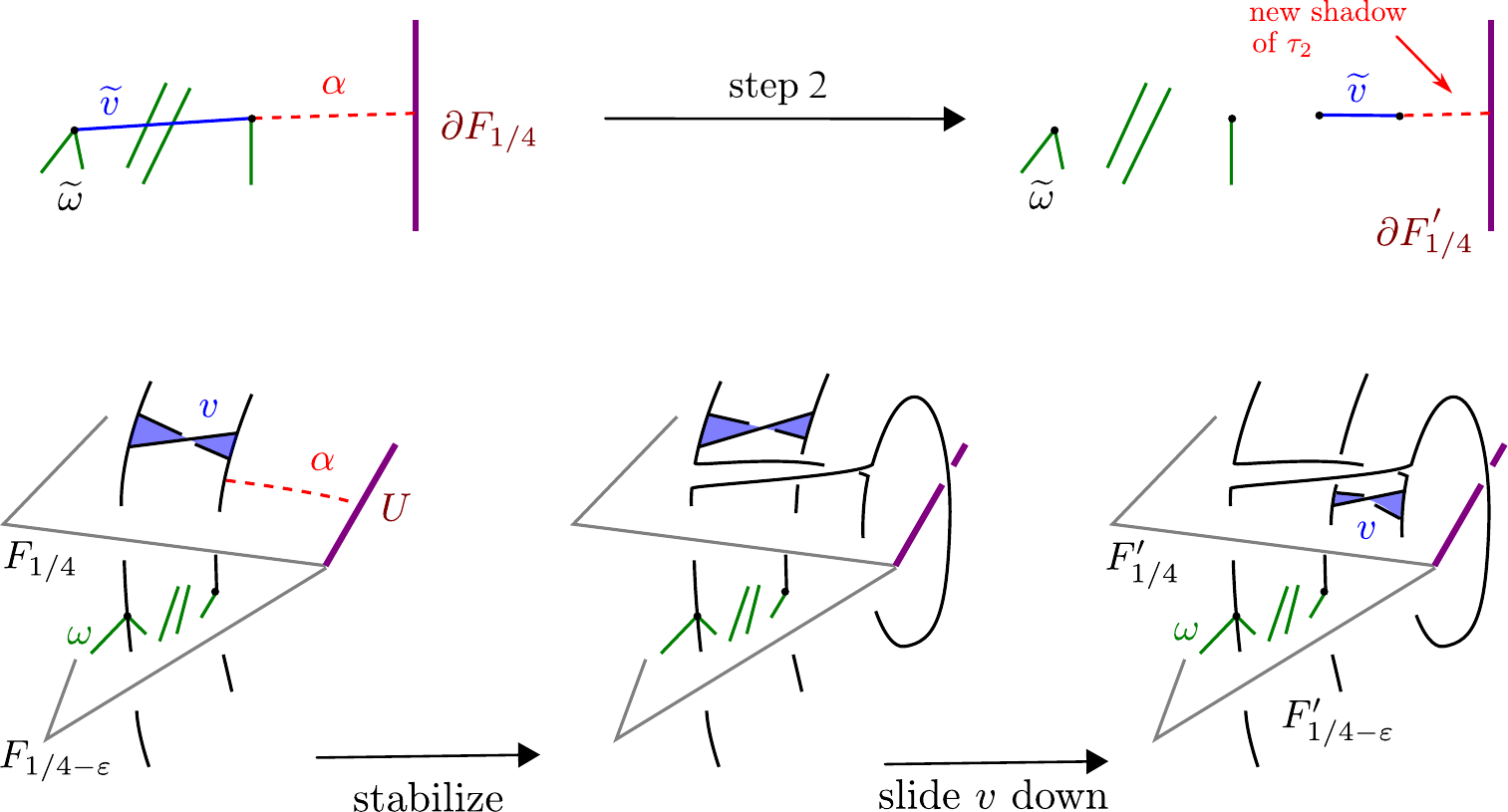}
\caption{How to use the arc $\alpha$ to stabilize $L$ in the proof of Proposition~\ref{prop:bandedunlink_implies_bandedbridgeposition}.} 
\label{fig:step2}
\end{figure}

We are ready to prove Theorem~\ref{thm: main};  Theorem~\ref{thm:existence_bisections} is an alternative and more detailed version of this main result. 

\begin{theorem}\label{thm:existence_bisections}
Let $W$ be a Weinstein domain equipped with a specific handle structure, i.e., a choice of plurisubharmonic function $\rho: W \to \R$. For any bisection with divides $\Tcal$ obtained from $\rho$, any ascending surface $F$ in $W$ can be symplectically isotoped into a bridge position with divides with respect to $\Tcal$. 
\end{theorem}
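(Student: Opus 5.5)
The plan is to chain together the results of Sections 3 and 5. By Theorem~\ref{thm: main}, the surface $F$ is assumed positive ascending. Write $\Tcal=\Tcal(\Lambda,k,\Sigma)$ for the bisection obtained from $\rho$ as in Section~\ref{rem:KWtobisection}. Then $W_1$ is the union of the $0$- and $1$-handles, and $\partial W_1=H_1\cup_\Sigma\overline{H}_2$ is a contact Heegaard splitting with open book $(\Gamma,\pi)$ whose $H_2$ contains $\Lambda$ in its Legendrian core.

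\textbf{Step 1: reduce to a banded unlink.} Apply Proposition~\ref{prop: weinstein_homotopy} together with Remark~\ref{rem: ascending_isotopy}, packaged as Proposition~\ref{prop: construct_pos_asc}. This isotopes $F$, through positive ascending surfaces and up to Weinstein homotopy, to a realizing surface $S(U,\nu)$ as in Proposition~\ref{realizing-symplectic-surface.prop}. Here $U$ is a max-$sl$ unlink in $\#^kS^1\times S^2$ and $\nu$ is a collection of positive bands disjoint from $\Lambda$. By Hayden's Theorem~\ref{thm: hayden}(2), the isotopy may be taken through symplectic surfaces.

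\textbf{Step 2: achieve banded bridge position.} Apply Proposition~\ref{prop:bandedunlink_implies_bandedbridgeposition} to $(U,\nu)$ and the open book $(\Gamma,\pi)$ on $\partial W_1$. This gives a transverse isotopy of $(U,\nu)$, including positive Markov stabilizations, into banded bridge position, satisfying \ref{item:bb1}--\ref{item:bb4}. We must check three things along the way:
\begin{enumerate}
\item the isotopy and stabilizations can be performed in the complement of $\Lambda$, after a general position argument, since $\Lambda$ sits in the core of $H_2$ and is one-dimensional;
\item the bands stay positive;
\item the bands can be pushed into a collar of $F_{1/2}$ inside $H_2$, as required in the setup preceding Lemma~\ref{lem: bridge_position_bands}.
\end{enumerate}
By Lemma~\ref{lem:unique_unlink}, Remark~\ref{rem:disk_isotopy} and the discussion after Definition~\ref{def:band}, transverse isotopies of transverse banded unlinks extend to symplectic isotopies of the disks and of the band cobordisms $S_\nu$. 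Hence the realizing surface is moved by a symplectic isotopy. A positive Markov stabilization of $U$ does not change the surface up to symplectic isotopy: it adds a max-$sl$ unknot component, bounding a new disk, together with one band that cancels it. This is the standard fact that stabilization preserves the surface, as in Remark~\ref{rem: braid_isotopies}.

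\textbf{Step 3: conclude.} Once $(U,\nu)$ is in banded bridge position, Proposition~\ref{prop:banded_gives_bisection} together with Lemma~\ref{lem: bridge_position_bands} shows the following. $S(U,\nu)\cap W_1$ is a trivial symplectic disk system bounded by $\tau_1\cup\overline{\tau}_2$, and $S(U,\nu)\cap W_2$ is a trivial disk system bounded by the max-$sl$ unlink $\tau_2\cup\overline{\tau_2[\nu]}$ in $H_2\cup\overline{H}_3$. These pieces meet in the spine of Lemma~\ref{lem:spine_from_bridgeposition}, so $S(U,\nu)$ is in bridge position with divides with respect to $\Tcal$.

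\textbf{Expected obstacle.} I expect the main difficulty to be compatibility with the fixed $\Tcal$. Proposition~\ref{prop: construct_pos_asc} is only stated up to Weinstein homotopy, whereas the theorem fixes $\rho$ and $\Tcal$. I would first try to arrange that the Weinstein homotopy of Proposition~\ref{prop: weinstein_homotopy} is supported away from the handles, or that it is generated by a Liouville flow. Either way, it can be undone by an ambient symplectic isotopy that carries the modified structure back to $(W,\rho)$ and transports the surface with it. A further subtlety is ensuring that all critical levels of the surface sit between the $1$- and $2$-handles, so that the surface meets $W_2$ only in the band cobordism collared near $H_2$. This is exactly what Remark~\ref{rem: ascending_isotopy} provides.
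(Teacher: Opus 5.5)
Your proposal is essentially the paper's proof, which is exactly the chain Proposition~\ref{prop: construct_pos_asc}, then Proposition~\ref{prop:bandedunlink_implies_bandedbridgeposition}, then Proposition~\ref{prop:banded_gives_bisection}, applied to $\Tcal=\Tcal(\Lambda,k,\Sigma)$; your extra bookkeeping goes beyond what the paper writes. One caution: your check (1) cannot be obtained by general position, because the trace of an isotopy of a link in a $3$-manifold is $2$-dimensional and generically meets the $1$-dimensional $\Lambda$ in isolated points, so a strand could cross $\Lambda$ and change the surface in $W$; avoiding $\Lambda$ therefore has to be built into the braiding procedure itself, a point the paper also leaves implicit.
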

\begin{proof}
Let $(\Lambda,k)$ be a Kirby--Weinstein diagram for $W$ inducing the bisection with divides $\Tcal$; i.e., $\Tcal=\Tcal\left(\Lambda,k,\Sigma\right)$ (see Sec.~\ref{rem:KWtobisection}). Proposition~\ref{prop: construct_pos_asc} implies that $S$ can be described with a transverse banded unlink diagram $(U,\nu)$ inside $(\Lambda,k)$. The result now follows from Propositions~\ref{prop:bandedunlink_implies_bandedbridgeposition} and~\ref{prop:banded_gives_bisection}.
\end{proof}


\subsection{Ascending surfaces from bridge bisections with divides} \label{sec:stabilizations}

We have seen that the spine of a bridge bisection with divides determines a symplectic surface (Prop.~\ref{prop: spine_determines_symplectic surface}). In this section, we will show that such a surface can be made positive ascending after sufficiently many local modifications. 

We start by defining a modification of a symplectic surface near the boundary. Let $S$ be a properly embedded symplectic surface in a Weinstein domain $(W,\omega)$, and let its boundary $K$ be represented as a braid in any open book supporting the induced contact structure on $\partial W$. 

\begin{definition}\label{boundary_stab.def}
The \emph{boundary stabilization} of $S$ is defined to be the symplectic surface $S'$ obtained by concatenating $S$ and the trace of a positive stabilization of the braid representative of $K$. 
\end{definition}
The surfaces $S'$ and $S$ are isotopic through symplectic surfaces, such that the isotopy restricts to a transverse isotopy on the boundary; see \cite[Fig.\ 1]{Orevkov03} for details. 

\begin{figure}[h]
    \centering
    \includegraphics[width=0.65\linewidth]{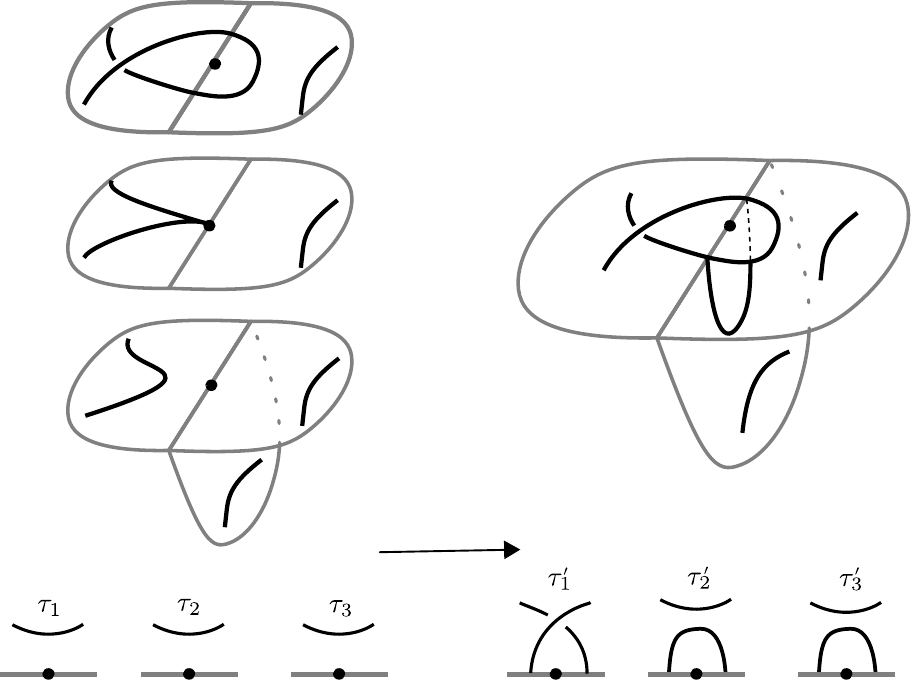}
    \caption{Bisecting a boundary stabilization}
    \label{fig:boundary_stab}
\end{figure}

The following move captures how the spine of a bridge bisection with divides changes under certain boundary stabilizations. This move, called a \emph{positive braided perturbation}, is part of a collection of moves on bridge bisected surfaces that fix the underlying surface~\cite[Sec.\ 3.1]{aranda25}.

\begin{definition}
Let $\Tcal=(H_1,H_2,H_3;\tau_1, \tau_2,\tau_3)$ be the spine of a $b$-bridge bisection with divides and consider $\hat\phi_{i,i+1}$ be the pointed mapping class group of the tangle $\tau_i$. Let $\tau'_1$, $\tau'_2$, $\tau'_3$ be ($b+1$)-stranded braided tangles corresponding to the pointed monodromies:
\[\hat\phi'_{12}=ATB, ~ \hat\phi'_{23}=\hat\phi_{23}, \text{ and } \hat\phi'_{31}=\hat\phi_{31}, \text{ or }\]
\[\hat\phi'_{12}=\hat\phi_{12}, ~ \hat\phi'_{23}=\hat\phi_{23}, \text{ and } \hat\phi'_{31}=AT^{-1}B,\]
where $A,B$ are pointed mapping class group elements, $\hat\phi_{12}=AB$ or $\hat\phi_{31}=AB$, and $T$ is a positive half-twist involving the $(b+1)$-st puncture of the page. The resulting tuple of braided tangles $\Tcal'=(H_1,H_2,H_3;\tau'_1,\tau'_2,\tau'_3)$ is called a \emph{positive braided perturbation} of $\Tcal$.
\end{definition} 

Positive braided perturbations can be seen as a tangle modification by adding a half-twisted band to $\tau_1$ (or $\tau_3$) along an arc in a page of the half-open book, where each of the other two tangles gains one braided strand supporting no monodromy.

\begin{lemma}\label{lem:braid_stab_to_surface_stab}
Let $\Tcal=(H_1,H_2,H_3;\tau_1,\tau_2,\tau_3)$ be the spine of a bridge bisection with divides for $(W,S)$. Suppose that $\Tcal'=(H_1,H_2,H_3;\tau'_1,\tau'_2,\tau'_3)$ be the result of performing a positive braided perturbation on $\Tcal$. Then the symplectic surface described by $\Tcal'$ is equal to a boundary stabilization of $S$. 
\end{lemma}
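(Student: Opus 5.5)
We split into the two cases of the definition of positive braided perturbation. By symmetry we treat the first case, $\hat\phi'_{12}=ATB$, $\hat\phi'_{23}=\hat\phi_{23}$. The case $\hat\phi'_{31}=AT^{-1}B$ is handled by the same argument applied to the boundary braid $\tau_1\cup\overline{\tau}_3$.

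The approach is to compare $\Tcal'$ with a spine built directly from a boundary-stabilized surface. Use the remark after the definition: $\Tcal'$ is obtained from $\Tcal$ by adding one new braided strand to each tangle. This strand is a trivial arc near a new puncture $p_{b+1}$ close to the binding, carrying no monodromy in $\tau'_2,\tau'_3$. In $\tau'_1$ one also adds a half-twisted band along an arc $a$ in a page of the half-open book $H_1$, joining $p_{b+1}$ to a puncture of $\tau_1$; the position of $a$ is encoded by the splitting $\hat\phi_{12}=AB$.

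\textbf{Step 1.} Show that $\Tcal'$ is again a spine, using Proposition~\ref{prop: spine_determines_symplectic surface}. In $H_2\cup\overline{H}_3$ the new strand is a meridian of the binding, so $\tau'_2\cup\overline{\tau'_3}$ is the old \textmaxsl unlink plus one extra \textmaxsl component. This is still a \textmaxsl unlink, by Lemma~\ref{lem:unique_unlink}. In $H_1\cup\overline{H}_2$, the braid $\tau'_1\cup\overline{\tau'_2}$ is obtained from $\tau_1\cup\overline{\tau_2}$ together with a binding meridian by a $(1/2)$-framed band, i.e. a positive Markov stabilization (Def.~\ref{def:braid_stab}). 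It is therefore transversely isotopic to the original \textmaxsl unlink and has the same number of components. Hence $\Tcal'$ determines a bisected symplectic surface $S_{\Tcal'}$.

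\textbf{Step 2.} Identify $S_{\Tcal'}$ with a boundary stabilization of $S$. Decompose $S_{\Tcal'}\cap W_1$ and $S_{\Tcal'}\cap W_2$ sector by sector. The new strand contributes one extra disk to the disk system in $W_2$. In $W_1$, the disk system is built from the old disks, one new disk bounded by the meridian, and the band cobordism $S_a$ from Remark~\ref{bandsurface.rem} joining them. The key geometric input is Figure~\ref{fig:boundary_stab}. Take $S$ and concatenate it with the trace of a positive Markov stabilization of $K=\tau_1\cup\overline{\tau}_3$ performed along the arc $a$ in a page of $H_1$. The trace consists of a birth of a meridian unknot followed by a $(1/2)$-framed band. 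Push the birth into $W_2$ and the band into a collar of $H_1$ inside $W_1$. This rearranges the concatenation into exactly the sector decomposition given by $\Tcal'$.

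\textbf{Step 3.} Justify this rearrangement symplectically. Births and positive bands can be reordered and pushed across the bridge surface by the level-moving argument of Proposition~\ref{prop: weinstein_homotopy}. The resulting disk systems are then unique up to symplectic isotopy by Proposition~\ref{uniquedisks.prop} and Remark~\ref{rem:disk_isotopy}. Consequently the surface determined by $\Tcal'$ agrees with $S'$.

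The main obstacle is Step 2: we must check that the factorization $ATB$ places the stabilizing arc $a$ on the boundary braid $\tau_1\cup\overline{\tau}_3$ and not only on the unlink $\tau_1\cup\overline{\tau}_2$. To do this, we would compute the boundary pointed monodromy $\hat\phi'_{23}\circ\hat\phi'_{12}=\hat\phi_{23}ATB$. Rewriting it as $(\hat\phi_{23}A)\,T\,(\hat\phi_{23}A)^{-1}\cdot\hat\phi_{23}AB$ exhibits it as the original boundary monodromy multiplied by a conjugate of a positive half-twist at the new puncture. This is precisely a positive Markov stabilization of the boundary braid along the arc $(\hat\phi_{23}A)(a)$, which is the boundary stabilization of Definition~\ref{boundary_stab.def}.
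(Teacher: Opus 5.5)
Your proposal is correct and follows essentially the paper's argument. The positive Markov stabilization of $K=\tau_1\cup\overline{\tau}_3$ is realized in a collar of $\partial W$ split into collars of $H_1$ and $H_3$, so that the band sits in $W_1$ (a boundary stabilization of $S\cap W_1$) while the birth adds one small trivial disk to $S\cap W_2$, as in Figure~\ref{fig:boundary_stab}. Your extra checks make explicit what the paper leaves implicit: that $\Tcal'$ is a spine, the identity $\hat\phi_{23}ATB=(\hat\phi_{23}A)T(\hat\phi_{23}A)^{-1}\cdot\hat\phi_{23}AB$, and uniqueness of disk systems via Proposition~\ref{uniquedisks.prop}. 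The appeal to Proposition~\ref{prop: weinstein_homotopy} in Step 3 is unnecessary, since the band can be placed in the collar of $H_1$ directly.
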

\begin{proof}
We present the proof for the first type of positive braided perturbation, where $\hat\phi_{12}=ATB$; the second is analogous. Using the notation in Definition \ref{bridge_multisection_divides.def}, the half-open book structures in $H_1$ and $H_3$ determine an open book decomposition in $\partial W=H_1\cup \overline{H}_3$, in which the boundary of $S$, $K=\tau_1\cup \overline{\tau_3}$, is braided. We want to perform boundary stabilization on $S$ and ensure that the resulting surface still comes equipped with a bridge-bisection with divides. To do this, we choose the trace of the positive stabilization of $K=\tau_1\cup \overline{\tau}_3$ to fix $\tau_3$ set-wise as in Figure \ref{fig:boundary_stab}. 

At the 4-manifold level, adding a collar of $\partial W$ to $W=W_1\cup W_2$ gives the same bisection with divides, as we just add a collar of $H_1$ (resp.\ $H_3$) to $W_1$ (resp.\ $W_2$). At the surface level, $S'\cap W'_1$ is equal to a boundary stabilization of $S\cap W_1$, and $S'\cap W'_2$ is equal to $S\cap W_2$ with a small trivial disk added; see Figure \ref{fig:boundary_stab}. The new surface $S'$ intersects the 3-dimensional handlebodies $(H'_1,H'_2,H'_3)$ in the tangles $(\tau'_1,\tau'_2,\tau'_3)$, as desired. 
\end{proof}

\begin{lemma}\label{lem:bridge_to_banded}
Let $\Tcal=(H_1,H_2,H_3;\tau_1,\tau_2,\tau_3)$ be the spine of a $b$-bridge bisection with divides. Then, after performing sufficiently many positive braided perturbations as in Definition 5.6, the resulting spine $\Tcal'=(H_1,H_2,H_3;\tau'_1,\tau'_2,\tau'_3)$ satisfies the following conditions:
\begin{enumerate}
\item The braid $\tau'_1\cup \overline{\tau}'_2$ is braid isotopic to a link of meridians of the binding of the open book in $H_1\cup \overline{H}_2$, together with some positive Markov stabilizations, and
\item there exists a collection of braided bands for the tangle $\tau'_2$ such that $\tau'_2[\nu]$ is braid isotopic to $\tau_3'$ rel boundary. 
\end{enumerate}
\end{lemma}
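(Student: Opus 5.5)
The plan is to separate the two conditions. Condition (1) comes from a structural fact about \textmaxsl unlinks, and condition (2) from the smooth banded-presentation argument of \cite[Sec.~3]{aranda25}, now carried out with positive half-twists. Throughout, the two pairwise unions \(\tau_1\cup\overline{\tau}_2\) and \(\tau_2\cup\overline{\tau}_3\) are \textmaxsl unlinks, with \(c_1\) and \(c_2\) components respectively.

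\textbf{Condition (1).} The first step uses that \(\tau_1\cup\overline{\tau}_2\) is a braided \textmaxsl unlink in the open book on \(H_1\cup\overline{H}_2\), which supports the tight structure on \(\#^{k_1}S^1\times S^2\). By Lemma~\ref{lem:unique_unlink}, it is transversely isotopic to \(c_1\) meridians of the binding. The transverse Markov theorem for open books (Pavelescu \cite{pavelescu2012braiding}) then says that two braids representing transversely isotopic links become braid isotopic after positive Markov stabilizations of each. So there are numbers \(m,m'\) such that \(\tau_1\cup\overline{\tau}_2\) stabilized \(m\) times is braid isotopic to the meridian link stabilized \(m'\) times. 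A positive stabilization of \(\tau_1\cup\overline{\tau}_2\) that is supported in a neighborhood of a page inside \(H_1\) is exactly a positive braided perturbation \(\hat\phi'_{12}=ATB\): the half-twist \(T\) sits in \(\tau_1\), and the new strand is trivial in \(\tau_2\) and \(\tau_3\). Any braid isotopy can be pushed across \(\Sigma\) into \(H_1\), so we can arrange that all stabilizations happen on the \(\tau_1\) side. This gives (1) without disturbing \(\tau_2\cup\overline{\tau}_3\), apart from adding trivial strands.

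\textbf{Condition (2).} Apply the same reasoning to \(\tau_2\cup\overline{\tau}_3\), a \textmaxsl unlink with \(c_2\) components in \(H_2\cup\overline{H}_3\). Using perturbations of the second type \(AT^{-1}B\) on the \(\tau_3\) side, make \(\tau_2\cup\overline{\tau}_3\) braid isotopic to \(\tau_2\cup\overline{\tau}_2\) (which is \(b\) meridians) followed by \(b-c_2\) positive Markov stabilizations. Each of these stabilizations is a half-twist \(T\) on an arc joining two punctures. Reading each such half-twist as a \((1/2)\)-framed braided band (Definition~\ref{def: framedband}) on \(\tau_2\), pushed near \(F_{1/2}\), gives a collection \(\nu\) with \(\tau_2[\nu]\) braid isotopic to \(\tau_3'\) rel boundary. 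This is the reverse of the destabilization argument in Lemma~\ref{lem: bridge_position_bands}.

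\textbf{Main obstacle.} Stabilizing the union \(\tau_2\cup\overline{\tau}_3\) changes \(\tau_2\), so we must check that the perturbations used for (2) do not destroy (1). The plan is to do (2) first, and to realize every stabilization by changing only \(\tau_3\) and adding trivial strands to \(\tau_1\) and \(\tau_2\). A trivial strand added to \(\tau_1\cup\overline{\tau}_2\) is a new meridian, which is harmless. After that, do (1) using only \(\tau_1\)-side perturbations; these add trivial strands to \(\tau_2\) and \(\tau_3\), which is compatible with the bands, since a trivial strand just adds one more band-free meridian. The delicate point is to justify the claim that every braid isotopy plus stabilization in the Markov theorem can be localized on one side of \(\Sigma\), via conjugating by pointed mapping classes \(A,B\). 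I would argue this directly from the factorization form \(ATB\), since conjugation moves are absorbed into \(A\) and \(B\).
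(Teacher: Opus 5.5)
\textbf{Overall.} Your argument for (1), and the overall mechanism, are exactly the paper's. That mechanism is Pavelescu's transverse Markov theorem, with positive braided perturbations acting as a positive Markov stabilization on one of the two \textmaxsl unlinks while adding a meridian to the other. Your explicit ordering (first (2) via $AT^{-1}B$, then (1) via $ATB$) is a harmless refinement. The gap is in (2).

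\textbf{Where (2) breaks.} The paper does not read the bands off the Markov theorem; it invokes \cite[Lemma~5.3]{aranda25} and asserts that the disk-page proof extends to arbitrary pages. Your step ``read each half-twist as a band on $\tau_2'$'' needs more than Pavelescu gives. His theorem only provides a braid isotopy of the \emph{closed} braid $\tau_2'\cup\overline{\tau}_3'$. That is, $\hat\phi_{23}=g\,(\hat\phi^{\mathrm{triv}}\circ T)\,g^{-1}$ for some surface-braid (point-pushing) class $g$, where $\hat\phi^{\mathrm{triv}}$ is the pointed monodromy of $\tau_2'\cup\overline{\tau}_2'$ and $T$ is a product of positive half-twists. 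Condition (2), however, concerns $\tau_3'$ rel boundary relative to the \emph{fixed} $\tau_2'$, so it is about $\hat\phi_{23}$ itself, not its conjugacy class. Your proposed repair does not supply this, for three reasons.
\begin{enumerate}
\item A perturbation $ATB$ or $AT^{-1}B$ only inserts one half-twist into the word; the freedom in $A,B$ merely makes its arc arbitrary, and it never conjugates.
\item An isotopy of $\tau_3'$ rel $\Sigma$ does not change $\hat\phi_{23}$ at all.
\item A global isotopy of the bridge points conjugates $\hat\phi_{23}$ and $\hat\phi^{\mathrm{triv}}$ simultaneously, so it gains nothing.
\end{enumerate}
So the braid isotopy cannot in general be ``localized on one side of $\Sigma$'' or ``absorbed into $A$ and $B$''.

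\textbf{What actually has to be handled.} We have $g\hat\phi^{\mathrm{triv}}Tg^{-1}=(g\hat\phi^{\mathrm{triv}}g^{-1})(gTg^{-1})$, and conjugates of positive half-twists are positive half-twists. Hence your step is fine when $\hat\phi^{\mathrm{triv}}=\mathrm{id}$, as in the disk-page setting of \cite{aranda25} with no $2$-handles. In general, $\hat\phi^{\mathrm{triv}}$ is the product of Dehn twists about a lift $\wt\Lambda$ of the vanishing cycles to the punctured page. Conjugating by $g$ replaces these by twists about $g(\wt\Lambda)$, and the discrepancy $D_{\wt\Lambda}^{-1}D_{g(\wt\Lambda)}$ is a point-push-type element, not a product of positive half-twists. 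This is precisely where the ``arbitrary pages'' extension the paper asserts has content.

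\textbf{How to close the gap.} You must show that $\tau_2'$ has a representative, braid isotopic to it rel boundary in $H_2$, whose copy in $H_3=H_2[\Lambda]$ realizes the lift $g(\wt\Lambda)$. One way is to choose how the strands of $\tau_2'$ cross the page containing $\Lambda$: different choices are invisible in $H_2$ but give different tangles after surgery. You can then attach the bands along the arcs $g(a_i)$. Alternatively, cite \cite[Lemma~5.3]{aranda25} as the paper does.
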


\begin{proof}
Let $L$ be a braid representative of a max-sl unlink in some tight open book decomposition; $L$ is transversely isotopic to a disjoint union of small meridians of the binding of the open book. According to Pavelescu's work, these two descriptions share a common positive Markov stabilization~\cite{pavelescu2012braiding}. In other words, after enough positive Markov stabilizations, the braid $L$ can be braid isotopic to a positive Markov stabilization of a disjoint union of meridians. Examples of such links $L$ are the braids $\tau_1\cup\overline{\tau}_2$ and $\tau_2\cup \overline{\tau}_3$ in $H_1\cup \overline{H}_2$ and $H_2\cup \overline{H}_3$, respectively. Now, note that positive braided perturbations modify the braids $\tau_1\cup \overline{\tau}_{2}$ and $\tau_2\cup \overline{\tau}_3$ by a positive Markov stabilization or by adding a small meridian of the binding of the corresponding open book decomposition. Hence, there is a tuple $\Tcal'=(H_1,H_2,H_3;\tau'_1,\tau'_2,\tau'_3)$, obtained by some number of positive braided perturbations of $\Tcal$, satistying the following condition for $i=1,2$: $\tau_i\cup \overline{\tau}_{i+1}$ is braided isotopic to positive stabilizations of meridians of the open book of $H_i\cup \overline{H}_{i+1}$. Thus, the first condition is satisfied. To check the second condition, we invoke Lemma 5.3 of \cite{aranda25}, which provides the existence of the braided bands taking $\tau'_2$ to $\tau'_3$. Although the result in 
\cite{aranda25} is only stated for braided diagrams in an open book with a disk page, the proof works for arbitrary pages. 
\end{proof}

We are ready to prove Theorem~\ref{thm: intro_characterization}. Theorem~\ref{restated_thm: intro_characterization} is an alternative and more detailed version of this second main result. 

\begin{theorem}\label{restated_thm: intro_characterization}
Let $(W,S)=(W_1,\mathcal{D}_1)  \cup(W_2,\mathcal{D}_2)$ be a surface in a Weinstein domain equipped with a bridge bisection-with-divides.  Then after performing sufficiently many boundary stabilizations as in Definition~\ref{boundary_stab.def}, the pair $(W,S)$ is symplectically isotopic to a pair $(W,S')$ which admits a bridge bisection-with-divides $(W,S')=(W_1,\mathcal{D}_1')\cup (W_2,\mathcal{D}_2')$, such that $S'$ is positive ascending in $W$.
\end{theorem}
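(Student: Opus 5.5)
The plan is to reduce to the transverse banded unlink description of Proposition~\ref{realizing-symplectic-surface.prop}, using Lemma~\ref{lem:braid_stab_to_surface_stab} and Lemma~\ref{lem:bridge_to_banded} to translate boundary stabilizations into positive braided perturbations of the spine. Let $\Tcal=(H_1,H_2,H_3;\tau_1,\tau_2,\tau_3)$ be the spine of the given bridge bisection. First I apply Lemma~\ref{lem:bridge_to_banded} to obtain, after finitely many positive braided perturbations, a spine $\Tcal'=(H_1,H_2,H_3;\tau_1',\tau_2',\tau_3')$ with two properties. The braid $U:=\tau_1'\cup\overline{\tau}_2'$ is braid isotopic to positive Markov stabilizations of meridians of the binding. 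There is also a collection $\nu$ of braided ($1/2$-framed) bands for $\tau_2'$ with $\tau_2'[\nu]$ braid isotopic to $\tau_3'$ rel boundary. By Lemma~\ref{lem:braid_stab_to_surface_stab}, each perturbation realizes a boundary stabilization of $S$. By Proposition~\ref{prop: spine_determines_symplectic surface}, the surface $S'$ determined by $\Tcal'$ is therefore the boundary-stabilized surface up to symplectic isotopy, and it is again bridge bisected with disk systems $\mathcal{D}_1',\mathcal{D}_2'$.

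Next I identify $S'$ with a realizing surface $S(U,\nu)$. The link $U$ is a \textmaxsl unlink in $\partial W_1=\#^{k}S^1\times S^2$. The bands in $\nu$ are $1/2$-framed braided bands, so they are positive bands in the sense of Definition~\ref{def:band}; they lie in a collar of $F_{1/2}$ inside $H_2$ and hence are disjoint from $\Lambda$, which sits in the Legendrian core of $H_2$. Thus $(U,\nu)$ is a transverse banded unlink in the Kirby--Weinstein diagram $(\Lambda,k)$ underlying the bisection of Section~\ref{rem:KWtobisection}. By Proposition~\ref{uniquedisks.prop}, $\mathcal{D}_1'$ agrees up to symplectic isotopy rel boundary with the disks used in step (1)--(2) of Proposition~\ref{realizing-symplectic-surface.prop}. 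For $\mathcal{D}_2'$, consider the band cobordism $S_\nu$ from $\tau_2'$ to $\tau_2'[\nu]\simeq\tau_3'$, pushed across the 2-handles as in step (4). This cobordism is a symplectic disk system in $W_2$ bounded by the \textmaxsl unlink $\tau_2'\cup\overline{\tau_3'}$; here I use Lemma~\ref{lem: bridge_position_bands}, or directly the unlink hypothesis together with the acyclicity of the band cores. By the uniqueness in Proposition~\ref{uniquedisks.prop}, applied in the 1-handlebody $W_2$, it agrees with $\mathcal{D}_2'$ up to symplectic isotopy. Gluing the two sides gives $S'\simeq S(U,\nu)$.

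Finally, $S(U,\nu)$ is positive ascending with respect to $\rho$. Its index-0 critical points are the minima of the standard disks filling $U$. Its index-1 critical points are the positive hyperbolic points of the band cobordisms, by Remark~\ref{bandsurface.rem}. The level sets are transverse links, and the surface is a product through the 2-handle region. Hayden's theory (Theorem~\ref{thm: hayden}) then gives positivity of all critical points.

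The main obstacle is the identification in the second step. The isotopy $\tau_2'[\nu]\simeq\tau_3'$ rel boundary must be realized as a transverse braid isotopy inside $H_3=H_2[\Lambda]$, so that it extends to a symplectic isotopy of the disk systems compatible with the bisection. I expect to handle this by citing Remark~\ref{rem:disk_isotopy} and the fact that braid isotopies in a half-open book are transverse, and then checking that the band cores can be assumed to satisfy the acyclicity needed for the cobordism to consist of disks.
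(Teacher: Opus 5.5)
Your proposal is correct and follows essentially the same route as the paper. Lemmas~\ref{lem:braid_stab_to_surface_stab} and~\ref{lem:bridge_to_banded} turn boundary stabilizations into positive braided perturbations until $S'$ is described by a transverse banded unlink $(U,\nu)$ with $U=\tau_1'\cup\overline{\tau}_2'$ and $\tau_2'[\nu]\simeq\tau_3'$, and Hayden's framework then shows this surface is positive ascending. The only difference is the last step: the paper reads banded bridge position as a quasipositive factorization in a positive allowable open book and appeals to the proof of \cite[Thm.~1.2]{hayden21}, whereas you identify $S'$ with the realizing surface $S(U,\nu)$ via uniqueness of the trivial disk systems in $W_1$ and $W_2$ and check its critical points (disk minima and the band saddles of Remark~\ref{bandsurface.rem}) directly, which makes explicit the acyclicity and boundary-isotopy points the paper leaves implicit.
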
 

\begin{proof}
    From Lemmas~\ref{lem:braid_stab_to_surface_stab} and \ref{lem:bridge_to_banded}, it follows that given a pair $(W,S)$ with a bridge bisection-with-divides decomposition, there exists a symplectic isotopy from $S$ to $S'$, and braided perturbations of the bisection-with-divides, such that $S'$ can be described by a transverse banded unlink diagram, which is in banded bridge position with respect to the resulting bisection-with-divides decomposition of $W$. We now note that describing a surface by a banded unlink, which is in banded bridge position, is equivalent to describing it as a quasipositive braid word with respect to a positive allowable open book. Then, appealing to the proof of \cite[Thm.~1.2]{hayden21}, it follows that $S'$ is a positive ascending surface.
\end{proof}

\section{Branched covers}\label{sec: palfs_branched_covers}
Multisections of smooth 4-manifolds are naturally compatible with branched covers: a multisection lifts to a multisection under a branched covering map \cite[Thm.\ 4.1]{lambert2021symplectic}.  We show in Theorem~\ref{branched_covers_lift.thm} that the same is true for multisections with divides. Given a multisection with divides $(W,S)$, and a branched cover $f:(W',S')\rightarrow (W,S)$, we call the resulting bridge multisection with divides of $(W',S')$ a {\it branched cover} of the bridge multisection with divides $(W,S).$  As an application, we show how to equip a smooth branched cover of a Weinstein filling with a Weinstein filling, and explain how to construct bisection with divides for fillings of Brieskorn spheres $\Sigma(2,3,n).$

Compatibility of branched covers with contact and symplectic structures was first studied by Geiges and Auroux respectively \cite{geiges2008introduction, auroux2000_symplectic}.  Compatibility of branched covers with weak fillings was studied by Gironella \cite{gironella2020some}, and that work motivates the definition below.

\begin{definition}[{Following \cite[Thm.\ B]{gironella2020some}}]
    Let $(W',\omega')$ be a (weak, strong, Weinstein) filling of $(M',\xi')$, and $(W,\omega)$ be a (weak, strong, Weinstein) filling of $(M,\xi)$. A {\it branched cover of (weak, strong, Weinstein) fillings} $f:(W',S')\rightarrow (W,S)$ is a smooth branched cover with branching set $S$, such that
    \begin{enumerate}
    \item $S$ and $S'$ are symplectic surfaces in $(W,\omega)$ and $(W',\omega')$, respectively;
    \item $\partial S$ and $\partial S'$ are transverse links in $(M,\xi)$ and $(M',\xi')$, respectively;
    \item $(M',\xi')$ is a contact branched cover of $(M,\xi)$ along $\partial S$ in the sense of \cite[Sec.\ 4.4.1]{geiges2008introduction}; and
    \item $\omega'$ is deformation equivalent 
    to the form $\hat{\omega}_\epsilon$ in \cite[Clm.\ 2.9]{gironella2020some} for $\epsilon$ sufficiently small.
    \end{enumerate}
\end{definition}

In \cite[Thm.\ B]{gironella2020some}, it is shown that given any smooth branched cover $(W',S')\rightarrow (W,S)$ with $(W,\omega)$ a weak filling of $(M,\xi)$, and branching set $S$ a symplectic surface with boundary a transverse link in $(M,\xi)$, the symplectic form $\omega'$ on $W'$ can be chosen so that $(W',\omega')$ is a weak filling of $(M',\xi')$, i.e.\ $f$ is a branched cover of weak fillings.  

It follows from Theorem~\ref{branched_covers_lift.thm} that when $(W,\omega)$ is a Weinstein filling of $(M,\xi)$, the symplectic form $\omega'$ on $W'$ can be chosen so that $(W',\omega')$ is a Weinstein filling of $(M',\xi')$, i.e., $f$ is a branched cover of Weinstein fillings. 

\begin{theorem}\label{branched_covers_lift.thm}
Let $(W,S)$ be a symplectic surface in the Weinstein domain $(W,\omega)$ equipped with a bridge-multisection-with-divides structure $(W,S)=(W_1,\mathcal{D}_1)\cup \ldots \cup (W_n,\mathcal{D}_n)$. Suppose that $S$ is the branching set of a smooth branched cover $f:(W',S')\rightarrow (W,S)$.  Then $(W',S')=(f^{-1}(W_1),f^{-1}(\mathcal{D}_1))\cup \ldots \cup (f^{-1}(W_n),f^{-1}(\mathcal{D}_n))$ is a bridge multisection of $S'=f^{-1}(S)$ in a Weinstein domain $(W',\omega')$, with fillings of each $(f^{-1}(W_i),\omega'|_{f^{-1}(W_i)}
)$ uniquely determined by $f$ and $(W_i,\omega|_{W_i})$. In particular, $f$ is a branched cover of Weinstein fillings.
\end{theorem}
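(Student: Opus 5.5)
The approach is to lift each piece of the bridge multisection with divides separately, and then check that the lifted pieces satisfy Definition~\ref{multisec.def} and Definition~\ref{bridge_multisection_divides.def}. Since $f$ is a smooth branched cover, the smooth statement is available from \cite[Thm.~4.1]{lambert2021symplectic}: because each $\mathcal{D}_i$ is a trivial disk system in $W_i\cong\natural^{k_i}S^1\times D^3$, the preimage $f^{-1}(W_i)$ is again a 4-dimensional 1-handlebody. Likewise, each $f^{-1}(H_i)$ is a 3-dimensional handlebody, since $H_i$ is branched along the trivial tangle $\tau_i$. Finally, $f^{-1}(\Sigma)$ is a closed surface, branched over the $2b$ points $p_j$. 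Hence $(W',S')$ carries a smooth bridge multisection, and what remains is the contact and symplectic data.

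\textbf{Contact data.} I would use the half-open book structure. The braid $\tau_i$ is transverse to the pages of the half-open book on $H_i$. So the pages pull back to pages $f^{-1}(F_t)$, which are branched covers of $F_t$ over $\tau_i\cap F_t$. The binding is $\Gamma'=f^{-1}(\Gamma)$, which is an unbranched cover because $\Gamma$ avoids $S$. By the contact branched cover construction of \cite[Sec.~4.4.1]{geiges2008introduction}, applied to the transverse braids, each $f^{-1}(H_i)$ becomes a contact handlebody supported by the lifted half-open book. The dividing set on $f^{-1}(\Sigma)$ is $f^{-1}(\Gamma)$, and it is the same for every $i$. The union $f^{-1}(H_i)\cup\overline{f^{-1}(H_{i+1})}$ is then a contact Heegaard splitting of $\partial f^{-1}(W_i)$. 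It is the branched cover of $(\#^{k_i}S^1\times S^2,\xi_{std})$ along the \textmaxsl unlink $\tau_i\cup\overline{\tau}_{i+1}$. By Lemma~\ref{lem:unique_unlink}, this unlink is transversely isotopic into a Darboux ball as a standard unlink, and each component is a 1-braid meridian of the binding up to positive stabilization. The cover of a Darboux ball along a standard transverse unlink is a boundary sum of Darboux balls and standard pieces, i.e.\ a connected sum of tight $S^1\times S^2$'s with $\xi_{std}$. Equivalently, at the level of monodromy, lifting a positive half-twist gives a positive Dehn twist, and lifting a meridian gives a trivial cover of the page. In either description the lifted monodromy is a product of positive Dehn twists on the lifted page, so the open book is tight.

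\textbf{Symplectic data.} By \cite{mcduff_rational-ruled}, $(\#^{m}S^1\times S^2,\xi_{std})$ has a unique Weinstein filling, namely the 1-handlebody. So I would equip each $f^{-1}(W_i)$ with this filling, which by this uniqueness is determined by $f$ and $(W_i,\omega|_{W_i})$. By Proposition~\ref{uniquedisks.prop}, the lifted \textmaxsl unlink bounds symplectic disks $f^{-1}(\mathcal{D}_i)$, which I would identify with the branched preimages. The Gironella form $\hat\omega_\epsilon$ of \cite[Clm.~2.9]{gironella2020some} restricts on each $f^{-1}(W_i)$ to a filling of the standard tight structure. It is therefore deformation equivalent to the Weinstein 1-handlebody, which gives condition (4) of the definition. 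The Weinstein structures on the pieces then glue exactly as in \cite[Thm.~3.1]{IS-divides}: a multisection with divides determines a Weinstein domain.

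\textbf{Main obstacle.} I expect the hardest step to be checking that the lifted unlink $f^{-1}(\tau_i\cup\overline\tau_{i+1})$ is again \textmaxsl in the cover. This is needed both for the spine condition of Definition~\ref{def: spine_surface_with_divides} and for applying Proposition~\ref{prop: spine_determines_symplectic surface} to $(W',S')$. I would first prove this in the local model: a single 1-braid meridian of the binding, whose cover is a union of meridians of the lifted binding. I would then show that positive Markov stabilizations lift to positive Markov stabilizations, since a positive half-twist lifts to a positive half-twist or a positive Dehn twist. Once every piece is standard, Proposition~\ref{prop: spine_determines_symplectic surface} gives the bridge multisection with divides for $(W',S')$.
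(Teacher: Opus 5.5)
Your overall architecture is the paper's. Both proofs use the smooth lift via \cite[Thm.~4.1]{lambert2021symplectic}, contact branched covers with lifted (half-)open books and common dividing set $f^{-1}(\Gamma)$ (the paper cites \cite{casey2013branched} for support), Gironella's form, and uniqueness of fillings of $\#^m S^1\times S^2$.

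The step that does not go through as written is your argument that $\xi_i'$ on $f^{-1}(H_i)\cup\overline{f^{-1}(H_{i+1})}$ is the standard tight structure. Both of your justifications have a problem.
\begin{itemize}
\item The Darboux-ball model only describes $f$ over a ball containing the unlink. Over the complement, $f$ is an unbranched cover of $(\#^{k_i}S^1\times S^2)\setminus B^3$, which is nontrivial whenever the core loops of the $S^1\times S^2$ summands have nontrivial monodromy. So the local picture does not determine the global contact manifold.
\item The monodromy version presupposes that $\phi_{i,i+1}$ is a product of positive Dehn twists. That is not part of the data: any open book supporting $\xi_{std}$ can occur, and Stein fillability does not in general force a positive factorization.
\item A 1-braid meridian does not give a trivial cover of the page. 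It adds a branch point and changes the lifted page, even though it contributes no monodromy.
\end{itemize}
The paper states the contactomorphism right after producing Gironella's weak filling, and that filling is exactly what justifies it, so you should reverse your order. First apply \cite[Thm.~B]{gironella2020some} to $(W_i,\mathcal D_i)$ to get a weak filling $(f^{-1}(W_i),\omega_i')$ of $\xi_i'$. Since $f^{-1}(W_i)$ is smoothly a 1-handlebody, $\xi_i'$ is a weakly fillable, hence tight, contact structure on $\#^m S^1\times S^2$, hence the standard one by Eliashberg's uniqueness. Only then invoke \cite{mcduff_rational-ruled, Wendl_strongly}.

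On your ``main obstacle'' the routes genuinely differ. The paper does no braid combinatorics here: by the construction in \cite[Thm.~B]{gironella2020some}, $f^{-1}(\mathcal D_i)$ is a system of symplectic disks for $\omega_i'$ in the filling $f^{-1}(W_i)$, and the paper reads off from this that its boundary is a max-$sl$ unlink. Your route is valid and more explicit, since it also produces the lifted pointed monodromies. Your local model is right: if $m$ is a 1-braid meridian bounding $E$, each component of $f^{-1}(m)$ bounds a lift of $E$. That lift is a disk meeting $f^{-1}(\Gamma)$ once and radially foliated by lifted pages, so it is again a 1-braid meridian.

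The lifting statement needs correcting, however. For a simple cover, a positive half-twist along $\alpha$ lifts to a positive Dehn twist about the closed lift of $\alpha$ on the two exchanged sheets \emph{together with} positive half-twists on the remaining sheets. Because the new puncture sits near $\partial F$, that Dehn twist runs once over a new 1-handle of the lifted page. So the lift of a positive Markov stabilization is a positive open-book stabilization plus positive Markov stabilizations, which is what preserves the transverse isotopy class upstairs.

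Finally, deformation equivalence of $\hat\omega_\epsilon$ with the Weinstein structure on each $f^{-1}(W_i)$ separately does not by itself give condition (4) for the glued form on $W'$. The paper leaves this implicit as well.
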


\begin{proof}
Let $f:(W',S')\rightarrow (W,S)$, be a smooth branched cover, where $(W,S)=(W_1,\mathcal{D}_1)\cup \ldots \cup (W_n,\mathcal{D}_n)$ is a bridge multisection of with divides of a symplectic surface $S$.  We show that the pieces of the decomposition $(W',S')=(f^{-1}(W_1),f^{-1}(\mathcal{D}_1))\cup \ldots \cup (f^{-1}(W_n),f^{-1}(\mathcal{D}_n))$ form a bridge multisection with divides.

As $(W,S)=(W_1,\mathcal{D}_1)\cup \ldots \cup (W_n,\mathcal{D}_n)$ is a bridge multisection with divides, we have the following:

\begin{enumerate}
\item $(\Sigma, \{ p_1,\dots, p_{2b}\})=\cap_{i=1}^n (W_i,\mathcal{D}_i)$ is a closed, orientable surface with $2b$ marked points. 
  \item $(H_i,\tau_i)=(W_i,\mathcal{D}_{i})\cap(W_{i+1},\mathcal{D}_{i+1})$ is a $b$-strand braid in the half open book structure on $H_i$, such that consecutive unions $(H_i,\tau_i)\cup (H_{i+1},\tau_{i+1})$ form a braided $\textmaxsl$ unlink in the corresponding open book for the tight contact structure on $H_i\cup\bar{H}_{i+1}$.
    \item $\mathcal{D}_i$ is a trivial symplectic disk system in $(W_i,\omega_i)$, which is a filling of the tight contact structure on $H_i\cup\bar{H}_{i+1}$.     
\end{enumerate}
The above decomposition lifts to a smooth multisection of $(W',S')=(f^{-1}(W_1),f^{-1}(\mathcal{D}_1))\cup \ldots \cup (f^{-1}(W_n),f^{-1}(\mathcal{D}_n))$ as proven in \cite[Thm.\ 4.1]{lambert2021symplectic} for smooth trisections. Therefore, it remains to check the contact and symplectic data lift as required.

The contact structure $\xi_i$ on $H_i\cup H_{i+1}$ determines a contact structure $\xi_i'$ on $f^{-1}(H_i)\cup f^{-1}(H_{i+1})$, the branched cover of $H_i\cup H_{i+1}$ along the transverse link $\tau_i\cup \overline{\tau}_{i+1}$, that is unique up to isotopy \cite[Prop.\ A]{gironella2020some}, \cite{geiges2008introduction}. We first show that since $\xi_i$ is supported by the open book structure corresponding to the contact Heegaard splitting $H_i\cup H_{i+1}$, it follows that $\xi_i'$ is supported by the open book structure corresponding to the contact Heegaard splitting $f^{-1}(H_i)\cup f^{-1}(H_{i+1})$.  This is stated in \cite[Thm.\ 3.4.1]{casey2013branched} for the case where the base is $(S^3,\xi_{std})$. 

 As the contact handlebodies $(H_i,\xi_i)$ all induce the same dividing set $\Gamma$ on $\Sigma$, the contact handlebodies $(f^{-1}(H_i),\xi_i')$ induce the same dividing set $f^{-1}(\Gamma)$ on $f^{-1}(\Sigma)$. By \cite[Thm.\ B]{gironella2020some}, there is a symplectic structure $\omega_i'$ on $f^{-1}(W_i)$ such that $(f^{-1}(W_i),\omega_i')$ is a weak filling of $(f^{-1}(H_i)\cup f^{-1}(H_{i+1}),\xi_i')$. As $(f^{-1}(H_i)\cup f^{-1}(H_{i+1}),\xi_i')$ is contactomorphic to $(\#^{k_i}S^1\times S^2,\xi_{std})$, this weak filling is unique up to symplectic deformation, so it is Weinstein \cite{mcduff_rational-ruled, Wendl_strongly} 
 Also by the construction in \cite[Thm.\ B]{gironella2020some}, the trivial disk systems $f^{-1}(\mathcal{D}_i)$ are symplectic with respect to $\omega_i'.$ As a result, the boundary of each $\mathcal{D}_i$ is a \textmaxsl transverse unlink in $(f^{-1}(H_i)\cup f^{-1}(H_{i+1}),\xi_i')$. Therefore, we have a bridge-multisection-with-divides structure on $(W',S')$ and $f$ is a branched cover of Weinstein fillings.
\end{proof}

\begin{remark}
For 1-handlebodies $\#^k S^1 \times B^3$, there exist non-isotopic but deformation equivalent forms \cite{wang2024note}.
Thus Theorem~\ref{branched_covers_lift.thm} does not specify an isotopy class, but rather a deformation equivalence class of symplectic forms.
This is consistent with the fact that the spine of a multisection-with-divides determines the form up to deformation; see \cite[Rem.~1.2]{IS-divides}.
\end{remark}

In \cite{loi2001compact}, Loi and Piergallini show that every Weinstein domain is a branched cover of $B^4$ along a positive ascending surface. Combining their result with Theorem~\ref{thm: main} and Theorem~\ref{branched_covers_lift.thm}, we obtain a new proof of the following theorem from \cite{IS-divides}.

\begin{corollary}[{\cite[Thm.\ 3.1]{IS-divides}}]\label{Cor: existence_multisections_Weinstein}
    Every compact Weinstein domain admits a bisection with divides.
\end{corollary}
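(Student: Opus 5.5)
The plan is to realize $W$ as a branched cover of the standard Weinstein $B^4$ and pull back a bridge bisection with divides from the base. First, by Loi--Piergallini \cite{loi2001compact}, a given compact Weinstein domain $(W,\omega)$ is, up to Weinstein homotopy, a branched cover $f\colon W\to B^4$ branched along a positive ascending surface $S\subset (B^4,\omega_{st})$. Next, take the standard bisection with divides of $B^4$: use the empty Kirby--Weinstein diagram $(\Lambda,k)=(\emptyset,0)$ and any open book supporting $(S^3,\xi_{std})$, for instance the disk open book or a stabilization of it, and apply Section~\ref{rem:KWtobisection} to get $\Tcal(\emptyset,0,\Sigma)$. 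By Theorem~\ref{thm: main}, in the more detailed form of Theorem~\ref{thm:existence_bisections}, $S$ can be isotoped through positive ascending surfaces, hence symplectic ones, to a surface $S'$ in bridge position with respect to this bisection with divides. Concretely, Proposition~\ref{prop: construct_pos_asc} gives a transverse banded unlink, Proposition~\ref{prop:bandedunlink_implies_bandedbridgeposition} puts it in banded bridge position, and Proposition~\ref{prop:banded_gives_bisection} produces the bridge bisection.

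Then transport the branched cover along this isotopy. An ambient isotopy of $B^4$ carrying $S$ to $S'$ lifts to a diffeomorphism of the smooth branched covers. So $W$ is diffeomorphic to a branched cover $f'\colon W'\to B^4$ along $S'$, determined by the same monodromy representation composed with the isotopy. Now apply Theorem~\ref{branched_covers_lift.thm} to $f'$ and the bridge bisection of $(B^4,S')$. This gives $W'=f'^{-1}(W_1)\cup f'^{-1}(W_2)$, a decomposition into Weinstein $1$-handlebodies, with lifted contact handlebodies sharing the dividing set $f'^{-1}(\Gamma)$ on $f'^{-1}(\Sigma)$. It also gives a Weinstein structure $\omega'$ on $W'$ for which this decomposition is a bisection with divides.

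The main obstacle is identifying $(W',\omega')$ with the original $(W,\omega)$ up to Weinstein homotopy, rather than merely producing some Weinstein structure on the smooth manifold $W$. I would handle it in three steps.

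First, I would use the boundary. The lifted contact structure on $\partial W'$ is the contact branched cover along the transverse link $\partial S'$. Since $\partial S'$ is transversely isotopic to $\partial S$, this agrees with the Loi--Piergallini contact boundary $(\partial W,\xi)$.

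Second, I would use the filling. Both $\omega$ and $\omega'$ are deformation equivalent to Gironella's form $\hat\omega_\epsilon$ built on the cover along a symplectic branch locus. Since $S$ and $S'$ are symplectically isotopic, these forms are deformation equivalent to each other.

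Third, I would invoke the remark following Theorem~\ref{branched_covers_lift.thm}: a spine determines the filling only up to deformation. Combined with \cite[Rem.~1.2]{IS-divides}, this should suffice to conclude that $W$ admits a bisection with divides in the sense of Definition~\ref{multisec.def}.

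A fallback for this step is to first check directly that Loi--Piergallini's covering structure is the one produced by Gironella's construction.
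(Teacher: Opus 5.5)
Your proposal is correct and follows the paper's proof: Loi--Piergallini presents $W$ as a branched cover of $B^4$ along a positive ascending surface, Theorem~\ref{thm: main} isotopes the branch locus into bridge position for the genus-$0$ bisection with divides of $(B^4,\omega_{std})$, and Theorem~\ref{branched_covers_lift.thm} lifts that bisection. The paper never separately identifies the lifted form $\omega'$ with the original $\omega$; it reads Theorem~\ref{branched_covers_lift.thm} as directly giving a bisection with divides on $(W,\omega)$, so your three-step identification is extra care beyond the paper, although its second step (that the Loi--Piergallini structure is deformation equivalent to Gironella's $\hat\omega_\epsilon$) is asserted rather than proved.
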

\begin{proof} 
Let $(W,\omega)$ be a Weinstein domain. Then by \cite{loi2001compact}, there is a smooth branched cover $f:(W, S')\rightarrow (B^4,S)$ with branching set $S$ a positive, ascending surface in $(B^4,\omega_{std})$. By Theorem~\ref{thm: main}, $S$ can be isotoped through positive ascending surfaces, so that it is in bridge position with respect to the genus 0 bisection with divides of $(B^4,\omega_{std})$. Taking the branched cover of this bisection yields a bisection with divides structure on $(W,\omega)$ by Theorem~\ref{branched_covers_lift.thm}.    
\end{proof}

\begin{corollary} Every compact Weinstein domain $(W,\omega)$ can be represented by a positive braided tri-plane diagram labeled with transpositions in the symmetric group $S_n$. Conversely, every such braided tri-plane diagram determines a bisection-with-divides structure on $(W,\omega)$.
\end{corollary}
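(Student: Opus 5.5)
We will deduce the first statement from Corollary~\ref{Cor: existence_multisections_Weinstein} and its proof. By \cite{loi2001compact}, $(W,\omega)$ is a branched cover $f\colon (W,S')\to (B^4,S)$ along a positive ascending surface $S\subset (B^4,\omega_{std})$, and we may take $f$ simple of some degree $n$. By Theorem~\ref{thm: main}, $S$ can be isotoped through positive ascending surfaces into bridge position with respect to the genus-$0$ bisection with divides of $B^4$. For this bisection the bridge surface is $S^2$, the dividing set is a single circle, and the half-open books have disk pages. So the spine is a triple of braids $(\tau_1,\tau_2,\tau_3)$, which is exactly a \textmaxsl braided tri-plane diagram in the sense of Definition~\ref{def: braided_tri-plane}. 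Concretely, the banded bridge position of Proposition~\ref{prop:bandedunlink_implies_bandedbridgeposition} gives $\tau_1,\tau_2$ from the unlink $U$ and $\tau_3=\tau_2[\nu]$ from the bands.

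Next we record the covering data. The simple branched cover is determined by a monodromy homomorphism $\pi_1(B^4\setminus S)\to S_n$ that sends meridians to transpositions. Restricting to the three tangle complements, each arc of each $\tau_i$ receives a transposition label, computed by Wirtinger-type rules at the crossings of a projection of the diagram. These labels are compatible across the shared punctures $\{p_1,\dots,p_{2b}\}$. Conversely, for the labels to define a cover of $B^4$, each pairwise union $\tau_i\cup\overline{\tau}_{i+1}$ must be labeled consistently; this is the condition that the representation extends over the trivial disk systems $\mathcal D_i$. Theorem~\ref{branched_covers_lift.thm} then says the preimage decomposition is a bisection with divides of $(W,\omega)$, with fillings determined up to deformation.

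For the converse, start from a labeled positive braided tri-plane diagram with consistent labels. Proposition~\ref{prop: spine_determines_symplectic surface} turns it into a bridge-bisected symplectic surface $S$ in $B^4$. Consistency of the labels over each \textmaxsl unlink bounding $\mathcal D_i$ extends them to a representation of $\pi_1(B^4\setminus S)$, since each complement $W_i\setminus\mathcal D_i$ has fundamental group generated by meridians of the tangles. This gives a simple branched cover $W'$, and Theorem~\ref{branched_covers_lift.thm} equips $W'$ with a Weinstein structure and a bisection with divides.

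The main obstacle is that the statement names $(W,\omega)$, so we must check that the lifted structure is the given one up to Weinstein deformation. For this I would invoke Loi--Piergallini's correspondence, which realizes the given Stein structure by the cover, together with the uniqueness in Theorem~\ref{branched_covers_lift.thm}. A second point to verify is that the Hurwitz and braid moves used in the isotopy to bridge position preserve the labeling. They do, because the labeling is pulled back along the isotopy of the complement.
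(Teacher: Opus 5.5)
Your proposal is correct and follows the paper's own (two-line) argument. The first statement comes from Loi--Piergallini together with Theorem~\ref{thm: main} applied to the genus-$0$ bisection with divides of $B^4$. The converse comes from Proposition~\ref{prop: spine_determines_symplectic surface} and Theorem~\ref{branched_covers_lift.thm}. You fill in details the paper leaves implicit: the Wirtinger labels, their extension over the trivial disk systems, and the identification of the lifted structure with the given $(W,\omega)$, which the paper likewise takes from Loi--Piergallini without comment. One precision on the extension step: it needs the inclusion $\partial W_i\setminus\partial\mathcal{D}_i\hookrightarrow W_i\setminus\mathcal{D}_i$ to be a $\pi_1$-isomorphism, which holds because the disks are boundary parallel; generation by meridians alone is not enough.
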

\begin{proof}
    The first statement follows from \cite{loi2001compact} and Theorem~\ref{thm: main}.  The second statement follows from \cite{loi2001compact}, Theorem~\ref{thm: main}, and Theorem~\ref{branched_covers_lift.thm}.
\end{proof}

In \cite{blair2024note}, it is shown that every smooth bisection $X=X_1\cup X_2$ of a 4-manifold with boundary is a three-fold simple cover of the genus-0 bisection of $B^4$, with branching set a properly embedded surface in bridge position with respect to the genus-0 bisection of $B^4$. A positive answer to the following question would show that every bisection-with-divides structure on $(W,\omega)$ can be encoded by a transposition-labeled, positive, braided tri-plane diagram.

\begin{question}
    Let $W=W_1\cup W_2$ be a bisection with divides for a Weinstein domain $(W,\omega)$.  Is there a bridge-bisected surface $S$ with divides in the genus-0 bisection of $(B_4,\omega_{\text{std}})$ such that a branched cover of $(B^4,S)$ is the bisection with divides $W=W_1 \cup W_2$?
\end{question}

\begin{example}
    Consider the bridge-bisected surface bounded by the trefoil in Figure~\ref{fig:fig_qp_trefoil}. We will construct a bisection-with-divides structure on its double branched cover.  The double branched cover of the positive trefoil, with contact topology orientation convention, is the lens space $L(3,2).$  This lens space has a unique tight contact structure $\xi_{std}$ given by the quotient of the standard contact structure on $S^3$ \cite{Honda2000}, and $(L(3,2),\xi_{std})$ has a unique Weinstein filling described by Kirby--Weinstein diagram a \textmaxsl Hopf link, by \cite{lisca2008symplectic, etnyre2021symplectic,christian2023some}.
    
     We construct a bisection diagram for the bisection with divides structure of genus 3 on its double branched cover.  Note that the algorithm in \cite{IS-divides} produces a genus 5 bisection from the standard diagram of the Legendrian Hopf link, so branched covers may be a useful tool for producing bisection-with-divides of low genus.  The shadow diagram of this surface is a collection of arcs on a sphere with 8 marked points, 4 on each side of the dividing set $\Gamma$; see Figure~\ref{fig:trefoil_shadows}. 
    \begin{figure}
        \centering
        \includegraphics[width=0.5\linewidth]{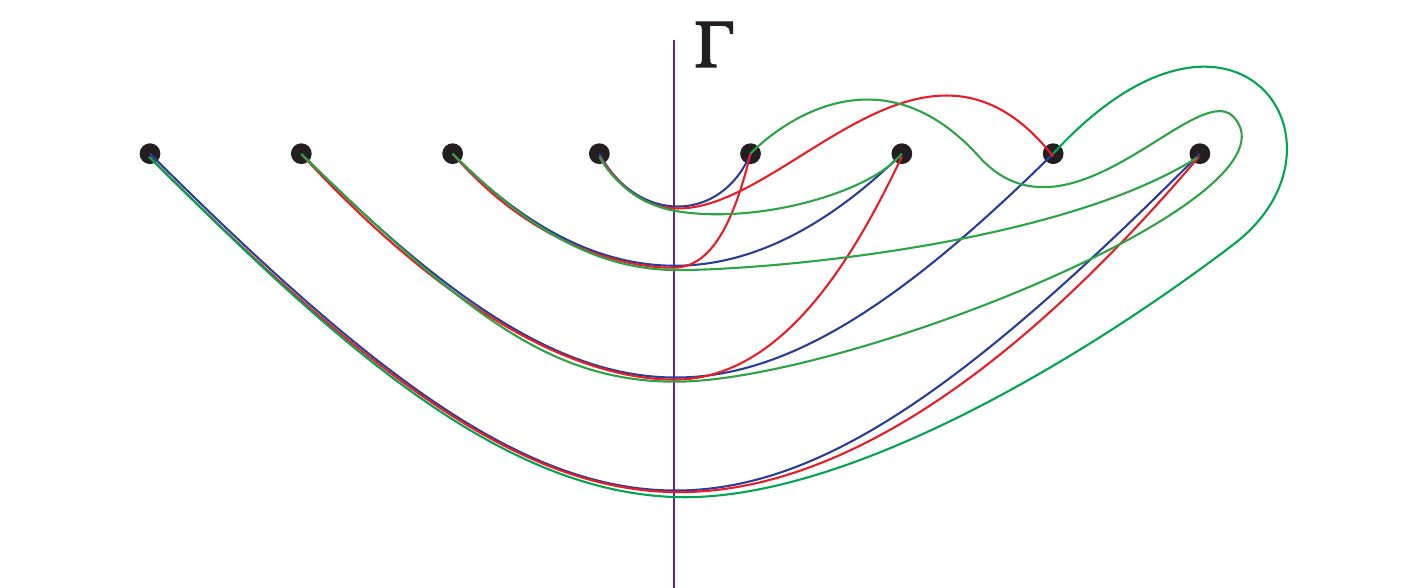}
        \caption{Shadow diagram for the tri-plane diagram of the trefoil surface in Figure~\ref{fig:fig_qp_trefoil}, with dividing set $\Gamma$.}
        \label{fig:trefoil_shadows}
    \end{figure}
    The double branched cover of this sphere along the 8 marked points is a genus 3 surface $\Sigma$, and the dividing set $\Gamma$ lifts to a pair of closed curves separating $\Sigma$ into two copies of a twice-punctured torus $F^\pm$.  The lifts of the shadows pair off to form boundaries of compressing disks in the three handlebodies $H_1$, $H_2$, and $H_3$, resulting in a bisection diagram with divides of genus 3.  See Figure~\ref{fig:trefoil_bisection_cover}. 

    We can also read off the decomposition of the monodromy factorization $\phi=\phi_{2,3}\circ\phi_{1,2}$ from this shadow diagram, namely:
    $$\phi_{1,2}=D_{\gamma_1}\circ D_{\gamma_2}\circ D_{\gamma_3}, \text{ and}$$
    $$\phi_{2,3}=D_{\gamma_1}\circ D_{\gamma_3},$$
    where the curves $\gamma_i$ are as shown in Figure~\ref{fig:trefoil_bisection_cover}.
    \begin{figure}
        \centering
        \includegraphics[width=\textwidth]{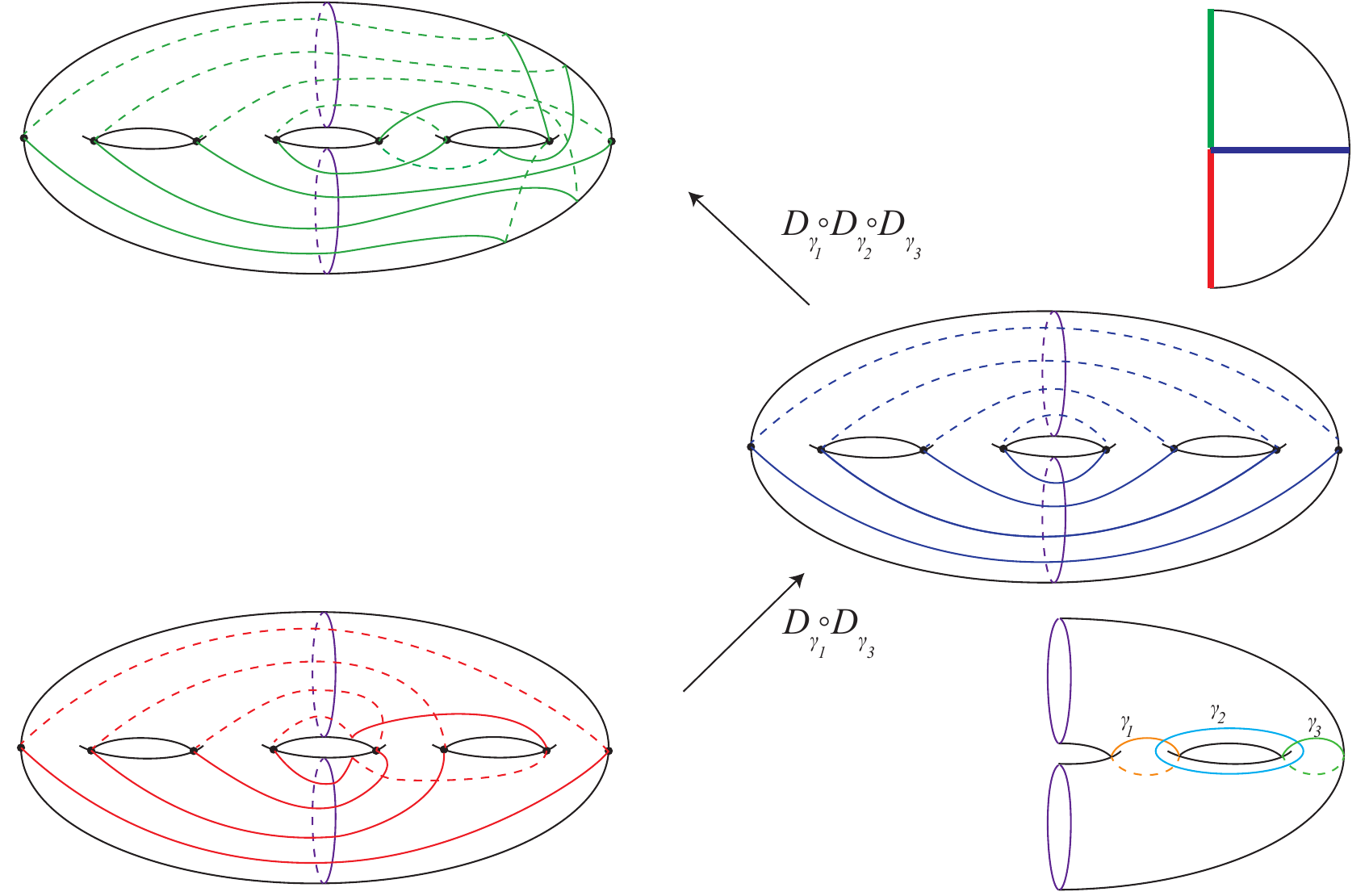}
        \caption{Bisection with divides for the double branched cover of the trefoil surface in Figure~\ref{fig:fig_qp_trefoil}, which has Kirby--Weinstein diagram a Hopf link with \textmaxsl components.}
        \label{fig:trefoil_bisection_cover}
    \end{figure}
   
\end{example}

\begin{example} There is a Weinstein filling of each Brieskorn sphere $\Sigma(2,3,n)$ (the $n$-fold cyclic cover of the trefoil) admitting a genus $3n-3$ 
bisection with divides. 
Therefore, taking the $n$-fold cyclic cover of the bridge-bisection in Figure~\ref{fig:fig_qp_trefoil} produces a bisection with divides for a Weinstein filling of $\Sigma(2,3,n)$, and an Euler characteristic computation shows that the genus of this bisection is $3n-3.$ Kirby--Weinstein diagrams for fillings of these and other Brieskorn spheres are given in \cite{Akhmedov2021ChainSurgeries}. Cyclic branched covers of bisections for minimal-genus Seifert surfaces of torus knots can be used to produce bisections with divides for fillings of $\Sigma(p,q,r).$
\end{example}

\section{Questions and future directions}
\subsection{Complements of multisections in Lefschetz fibrations} Upcoming work \cite{BRW:PANLF} shows that complements of multisections in Lefschetz fibrations admit Weinstein structures. This is a generalization of a classical result which endows the complement of a complex curve in $\mathbb{C}^2$ with a Stein structure \cite{simha}. 
One can expect that placing these positive ascending surfaces in a bridge-bisected position can be used to obtain multisection-with-divides diagrams of the complements. One can ask the following question:

\begin{question}
    Can one use the bridge-bisected diagrams of positive ascending surfaces to obtain `efficient' bisection-with-divides diagrams for the complements of these surfaces?
\end{question}

This approach has been followed in the smooth setting in the following series of works. Using the results of Meier--Zupan \cite{meier2017bridge, meier2018bridge} that show surfaces in 4-manifolds can be placed in bridge position, Gay--Meier and Kim--Miller \cite{gay_meier, kim_miller} showed how to obtain trisection diagrams for the complements of surfaces and then described surgery along 2-knots. This was then applied by Naylor \cite{naylor_twist} to obtain a trisection-theoretic proof of a theorem of Katanaga--Saeki--Teragaito--Yamada \cite{KSTY_gluck-price} that relates Gluck and Price twists of 4-manifolds.

\subsection{$\mathcal{L}$-invariants}
In smooth topology, one can use combinatorial complexes associated to 2-dimensional surfaces to define invariants of trisected 4-dimensional objects. Examples of such invariants are the $\mathcal{L}$-invariants defined for (a) closed 4-manifolds~\cite{L_Invariant2018}, (b) 4-manifolds with boundary~\cite{Rel_L_invariant_2021}, (c) 2-knots in $S^4$~\cite{BCTT22_Distance, APZ_kirbythompson23}, and (d) knotted surfaces in arbitrary 4-manifolds~\cite{aranda23pants}. Recently, the authors of \cite{contact_L_invariant_2025} extended the smooth $\mathcal{L}$-invariants to symplectic 4-manifolds equipped with a multisection with divides. From their work, one should expect these ideas to generalize to bridge multisections with divides. 
\begin{problem}\label{prob:L-inv}
    Define a contact $\mathcal{L}$-invariant for ascending surfaces in Weinstein domains. 
\end{problem}
Roughly, $\mathcal{L}$-invariants measure how far apart the handlebodies in the spine of a trisection are. Thus, equipped with the equivalences in Theorem~\ref{thm:equivalent_descriptions}, it would be interesting to exploit the algebraic structure of the 
mapping class group of a surface to construct a lower bound for this new contact $\mathcal{L}$-invariant from Problem~\ref{prob:L-inv}. 

\bibliographystyle{alpha}
\bibliography{bridge}

\end{document}